\documentclass[11pt]{amsart}

\newcommand{\pathtotrunk}{./}

\usepackage{microtype}
\usepackage{enumerate}
\usepackage{tikz}
\usetikzlibrary{calc}
\usetikzlibrary{decorations.pathreplacing}
% We need layers to draw the block diagram
\pgfdeclarelayer{background}
\pgfdeclarelayer{foreground}
\pgfsetlayers{background,main,foreground}

\newcommand{%
%\beginpgfgraphicnamed{diagrams/tikz/#1-external}%
\input{diagrams/tikz/.tex}%
%\endpgfgraphicnamed
}[1]{%
%\beginpgfgraphicnamed{diagrams/tikz/#1-external}%
\input{diagrams/tikz/#1.tex}%
%\endpgfgraphicnamed
}

%
%\beginpgfgraphicnamed{diagrams/tikz/#1-external}%
\tikzstyle{shaded}=[fill=red!10!blue!20!gray!50!white]
\tikzstyle{shaded line}=[double=red!10!blue!20!gray!50!white, double distance=2mm, draw=black]
\tikzstyle{unshaded}=[fill=white]
\tikzstyle{unshaded line}=[double=white, double distance=2mm, draw=black]
\tikzstyle{Tbox}=[circle, draw, thick, fill=white, opaque,]
\tikzstyle{empty box}=[circle, draw, thick, fill=white, opaque, inner sep=2mm]
\tikzstyle{background rectangle}= [fill=red!10!blue!20!gray!40!white,rounded corners=2mm] 
\tikzstyle{on}=[very thick, red!50!blue!50!black]
\tikzstyle{off}=[gray]
%
%These are for resizing a family of drawings at the same time
\tikzstyle{traces}=[scale=.2, inner sep=1mm,baseline]
\tikzstyle{quadratic}=[scale=.23, inner sep=.5mm, baseline]
\tikzstyle{annular}=[scale=.6, inner sep=1mm, baseline]
\tikzstyle{make triple edge size}= [scale=.4, inner sep=1mm,baseline] 
\tikzstyle{icosahedron network}=[scale=.35, inner sep=1mm, baseline]
\tikzstyle{ATLsix}=[scale=.25, baseline]
\tikzstyle{TL12}=[scale=.15,baseline=-0.5ex]
% I added the "=-0.5ex" -SB
\tikzstyle{TLEG}=[scale=.5,baseline]
\tikzstyle{PAdefn}=[scale=.7,baseline]
\tikzstyle{STrain}=[baseline=0,scale=2]%
%\endpgfgraphicnamed

%
%\beginpgfgraphicnamed{diagrams/tikz/#1-external}%

\newcommand{\upsidedown}[1]{\begin{scope}[y=-1cm] #1 \end{scope}}

\newcommand{\drawS}[3]{%
	\filldraw[fill=white,thick] (#1,#2) ellipse (3mm and 3mm);
	\node at (#1,#2) {\Large $S$};
	\path(#1,#2) ++(#3:0.37) node {$\star$};
}

\newcommand{\RainbowOne}{
	\fill[shaded] (-0.8,0) -- (-0.8,0.6) arc (180:0:0.8) -- (0.8,0) -- (0.2,0) -- (0.2,1) -- (-0.2,1) -- (-0.2,0);
	\draw (-0.8,0) -- (-0.8,0.6) arc (180:0:0.8) -- (0.8,0);
	\draw (-0.2,0) -- (-0.2,1);
	\draw (0.2,0) -- (0.2,1);
%	\node at (-0.3,0.5) {$1$};
	\node at (0.45,0.5) {\footnotesize$2n-1$};
	\drawS{0}{1}{-90}
}

\newcommand{\RainbowTwo}{
	\draw (0,0) -- (0,1);
	\node at (0.15,0.5) {\footnotesize$2n$};
	\drawS{0}{1}{180}
	%\draw (-1,0) -- (-1,0.5) arc (180:0:1) -- (1,0);
	%\draw (-0.9,0) -- (-0.9,0.5) arc (180:0:0.9) -- (0.9,0);
	\filldraw[shaded] (-1,0) -- (-1,0.5) arc (180:0:1) -- (1,0)  -- (0.9,0) -- (0.9,0.5) arc (0:180:0.9) -- (-0.9,0);
}

\newcommand{\JWPlusTwo}{%
	\filldraw[fill=white,thick] (-1,-0.2) rectangle (1,0.2);
	\node at (0,0) {\Large$\JW{2n+2}$};
}

\newcommand{\JWPlusFour}{%
	\filldraw[fill=white,thick] (-1.2,-0.2) rectangle (1.2,0.2);
	\node at (0,0) {\Large$\JW{2n+4}$};
}

\newcommand{\STrainOneOne}{%
	\foreach \x in {-1,0} {
		\node[anchor=south] at (\x+0.5,1) {\footnotesize$n-1$};
		\draw (\x,1) -- (\x + 1, 1);
	}
	\foreach \x in {-1,0,1} {
		\drawS{\x}{1}{90}
	}
}

\newcommand{\STrainStrings}[2]{%
	\fill[shaded] (-0.5,0) rectangle (0.5,1);
	\draw (-0.5,1) -- (-0.5,0);
	\node[anchor=west] at (-0.5,0.5) {\footnotesize#1};
	\draw (0.5,1) -- (0.5,0);
	\node[anchor=west] at (0.5,0.5) {\footnotesize#2};
}

\newcommand{\STrainOne}{%
	\node[anchor=south] at (0,1) {\footnotesize$n-1$};
	\draw (-0.5,1) -- (0.5, 1);
	\foreach \x in {-0.5,0.5} {
		\drawS{\x}{1}{90}
	}
}

\newcommand{\STrainThreeStrings}[3]{%
	\fill[shaded] (-1,1) rectangle (1,0);
	\foreach \x in {-1,0,1} {
		\draw (\x,0) -- (\x,1);
	}
        \node [anchor=west] at (-1,0.5) {\footnotesize#1};
        \node [anchor=west] at (0,0.5) {\footnotesize#2};
	\node [anchor=west] at (1,0.5) {\footnotesize#3};
}
%
%\endpgfgraphicnamed

\headheight=8pt     
\topmargin=0pt 
\textheight=624pt
\textwidth=432pt 
\oddsidemargin=18pt 
\evensidemargin=18pt

\usepackage[english]{babel}
\usepackage{amsthm,amssymb}
\usepackage{latexsym}
\usepackage{amsmath} 
\usepackage{graphics}
\usepackage{amsfonts}
\usepackage[latin1]{inputenc}
\usepackage{verbatim}
\usepackage{array}
\usepackage{xcolor}
\usepackage{enumitem}

\usepackage[pdftex,plainpages=false,hypertexnames=false,pdfpagelabels]{hyperref}

\def\Hom{{\operatorname{Hom}}}

\def\mod{{\hbox{mod}}}

\def\tr{\hbox{\rm tr}}

\def\End{\hbox{\rm End}}

\def\iso{\cong}

\newcommand{\tensor}{\otimes}

\newcommand{\eset}{\emptyset}

\newcommand{\II}{\mathrm{II}}
\newcommand{\III}{\mathrm{III}}

% MATH -----------------------------------------------------------

\newcommand{\Integer}{\mathbb Z}

\newcommand{\Real}{\mathbb R}
\newcommand{\Complex}{\mathbb C}

% tricky way to iterate macros over a list
\def\semicolon{;}
\def\applytolist#1{
    \expandafter\def\csname multi#1\endcsname##1{
        \def\multiack{##1}\ifx\multiack\semicolon
            \def\next{\relax}
        \else
            \csname #1\endcsname{##1}
            \def\next{\csname multi#1\endcsname}
        \fi
        \next}
    \csname multi#1\endcsname}

\def\calc#1{\expandafter\def\csname c#1\endcsname{{\mathcal #1}}}
\applytolist{calc}QWERTYUIOPLKJHGFDSAZXCVBNM;

\def\bfc#1{\expandafter\def\csname b#1\endcsname{{\mathbf #1}}}
\applytolist{bfc}QWERTYUIOPLKJHGFDSAZXCVBNM;

\def\bbc#1{\expandafter\def\csname bb#1\endcsname{{\mathbb #1}}}
\applytolist{bbc}QWERTYUIOPLKJHGFDSAZXCVBNM;

\def\hpic #1 #2 {\mbox{$\begin{array}[c]{l} \epsfig{file=#1,height=#2}
\end{array}$}}
\def\vpic #1 #2 {\mbox{$\begin{array}[c]{l} \epsfig{file=#1,width=#2}
\end{array}$}}

\newtheorem{theorem}{Theorem}[section]
\newtheorem{proposition}[theorem]{Proposition}

\newtheorem{definition}[theorem]{Definition}

\newtheorem{fact}[theorem]{Fact}
\newtheorem{conj}[theorem]{Conjecture}
\newtheorem{question}[theorem]{Question}

\newcommand{\arxiv}[1]{\href{http://arxiv.org/abs/#1}{\tt arXiv:\nolinkurl{#1}}}

\newcommand{\googlebooks}[1]{(preview at \href{http://books.google.com/books?id=#1}{google books})}

\newcommand{\pathtodiagrams}{\pathtotrunk}
\newcommand{\mathfig}[2]{{\hspace{-3pt}\begin{array}{c}%
  \raisebox{-2.5pt}{\includegraphics[width=#1\textwidth]{\pathtodiagrams diagrams/#2}}%
\end{array}\hspace{-3pt}}}

\makeatletter
\newcommand{\hashdef}[2]{\@namedef{#1}{#2}}
\newcommand{\hashlookup}[1]{\@nameuse{#1}}
\makeatother

\newcommand{\pathtographs}{diagrams/graphs/}

\input{\pathtographs lookup.tex}

\newcommand{\bigraph}[1]{{\hspace{-3pt}\begin{array}{c}%
  \raisebox{-2.5pt}{\includegraphics[height=9mm]{\pathtographs \hashlookup{#1}}}% 
\end{array}\hspace{-3pt}}}

\definecolor{blue-unshaded}{rgb}{0.75,0.75,1}
\definecolor{blue-shaded}{rgb}{0.6,0.6,1}
\definecolor{red-unshaded}{rgb}{1,0.75,0.75}
\definecolor{red-shaded}{rgb}{1,0.6,0.6}

\hypersetup{ 
    linkcolor={dark-red},
    citecolor={dark-blue}, urlcolor={medium-blue}
}

\begin{document}

\title{The classification of subfactors of index at most 5.}
\author{Vaughan F. R. Jones}
%\address{V.J.: Department of Mathematics, Vanderbilt University,  1326 Stevenson Center, Nashville, TN 37240, USA}
%\email{vfr@vanderbilt.edu}
\thanks{V.J. is supported by the NSF under Grant No. DMS-0301173}
\author{Scott Morrison}
%\email{scott@tqft.net}
\thanks{S.M. is supported by the Australian Research Council under the Discovery Early Career Researcher Award No. DE120100232}
\author{Noah Snyder}
\thanks{N.S. was supported by a NSF Postdoctoral Fellowship at Columbia University.}
\thanks{All authors were supported by
DARPA grants HR0011-11-1-0001 and HR0011-12-1-0009.
}
%\subjclass[2000]{46L37 (46L54)}
%\keywords{Subfactors, Planar Algebras}

\begin{abstract}
A subfactor is an inclusion $N \subset M$ of von Neumann algebras with trivial centers.  The simplest example comes from the fixed points of a group action $M^G \subset M$, and subfactors can be thought of as fixed points of more general group-like algebraic structures.  These algebraic structures are closely related to tensor categories and have played important roles in knot theory, quantum groups, statistical mechanics, and topological quantum field theory.  There's a measure of size of a subfactor, called the index.  Remarkably the values of the index below 4 are quantized, which suggests that it may be possible to classify subfactors of small index.  Subfactors of index at most 4 were classified in the '80s and early '90s.  The possible index values above 4 are not quantized, but once you exclude a certain family it turns out that again the possibilities are quantized.  Recently the classification of subfactors has been extended up to index 5, and (outside of the infinite families) there are only 10 subfactors of index between 4 and 5.  We give a summary of the key ideas in this classification and discuss what is known about these special small subfactors.
%This is the submitted version of \arxiv{1304.6141}.
\end{abstract}

\maketitle
\vspace{-0.5cm}
\tableofcontents

\hypersetup{ 
    linkcolor={dark-red},
    citecolor={dark-blue}, urlcolor={medium-blue}
}

\section{Introduction}
   If you haven't heard of subfactors you may be wondering what part of mathematics
they belong to. Actually that is not an easy question to answer! There is no doubt that
the subject has its roots in functional analysis, 
in particular the theory of von Neumann algebras which von Neumann introduced as the
mathematical structure underlying quantum mechanics. But since the earliest days of the
subject there have been numerous interactions between subfactors and other branches of
mathematics and physics, beginning with the discovery of a polynomial  invariant of knots, and 
most recently a new connection with random matrices. On the way many possible homes
for subfactors have been visited including three manifold topology, statistical mechanics
in two dimensions, conformal field theory, Hecke algebras, quantum groups, 2-categories, compact
groups, finite groups, and discrete groups. One feature has emerged from all approaches---subfactors are group-like objects representing symmetries generalizing ordinary group
actions.

  To know what a subfactor is one obviously first has to know what a factor is. The precise
definition will be given below, but for now just think of a factor $M$ as a 
simple algebra with identity $1$. A subfactor $N\subset M$ is then a simple unital subalgebra.
If we took this definition literally it would include the case where $M$ and $N$ are fields.
Although inaccurate, this is a useful first approximation. In 
particular, the degree of the field extension $[M:N]$ is the dimension of $M$ as a vector
space over $N$ and indeed the original impetus for subfactor theory was to see to what
extent this notion, which we will rather call  the ``index'' of $N$ in $M$, can be extended
beyond the context of fields. There are a host of ways to extend this index but they all
give the same answer for a very special class of factors, namely the $\II_1$ factors of Murray and von
Neumann. A $\II_1$ factor is a simple complex *-algebra $M$ which possesses a trace
 $tr: M\rightarrow \mathbb C$, and may be realised as an infinite dimensional algebra of operators on Hilbert space.
The trace is linear and satisfies $tr(ab)=tr(ba)$ for all $a,b\in M$. One may normalise it so that
$tr(1)=1$. 

 The trace is the key
to the definition of index as it allows one to associate a \emph{dimension} to vector spaces
on which $M$ acts. This is familiar from K-theory where a finitely generated projective
$M$-module $V$ is the same thing as an idempotent in some matrix algebra over $M$ so
one may take its dimension $ \dim_MV$ 
as the sum of the traces of the diagonal elements of the matrix. Now if $N\subset M$ is
a subfactor, $M$ becomes an $N$-module by left multiplication.  Finiteness of the index
means that $M$ is a finitely generated  projective module and $[M:N]$ is by definition
the trace of the corresponding idempotent.  That is,
$$[M:N] = \dim_N(M).$$

There are two closely related ways to get subfactors from finite groups. The $\II_1$ factors tend
to have many (quite explicit) outer automorphisms. For instance, if $M$ is a $\II_1$ factor then so is $M \otimes M$, and  the ``flip'' automorphism
of $M\otimes M$ is outer. If $G$ is a finite group of outer automorphisms one may form
the ``crossed product'' $M\rtimes G$ consisting of formal linear combinations 
 $ \mathop \sum_{g\in G} a_g u_g$ where the $a_g$ are in $M$ and the algebra
structure is determined  in the obvious way by $u_gu_h=u_{gh}$ and $u_gxu_g^{-1}=g(x)$ for $x\in M$.
That $M\rtimes G$ is also a $\II_1$ factor is guaranteed by outerness of the action.  As you might expect, it is not difficult to see that $[M\rtimes G:M]=|G|$.   
By a duality which is a major part of the subfactor story, one obtains that the fixed point 
algebra $M^G$ is a $\II_1$ factor and that $[M:M^G]=|G|$.  There is a version of the Galois correspondence which says that any automorphism of $M$ fixing $M^G$ pointwise is inner (i.e. conjugation by an element of $M$), and thus  one may recover the group from the knowledge of the subfactor  $M^G\subset M$ as those inner automorphisms fixing $M^G$.   In
this way the theory of subfactors contains the theory of finite groups, though we are often most interested in understanding more complicated subfactors which don't arise from group theory.

Even better there is
an extraordinary $\II_1$ factor, called the \emph{hyperfinite} $\II_1$ factor $R$ on which each finite group
acts  in exactly one way by outer automorphisms.
\begin{theorem}[\cite{MR587749,MR0448101}]\label{vjthesis}
Any two actions of the finite group $G$ on $R$, for which all non-trivial elements of $G$ 
act by outer automorphisms, are conjugate by an automorphism of $R$.
\end{theorem}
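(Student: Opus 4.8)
The plan is to reduce everything to a single \emph{model} action. I would fix the infinite tensor product decomposition $R \cong \bigotimes_{n=1}^{\infty}\bigl(M_{|G|}(\Complex),\mathrm{tr}\bigr)$ and let $\lambda\colon G\to U(|G|)$ be the left regular representation; the diagonal action $\gamma_g=\bigotimes_{n}\Ad(\lambda_g)$ is then an action of $G$ on $R$, and each $\gamma_g$ with $g\neq e$ is outer because $\lambda_g$ is a non-scalar unitary whose adjoint action accumulates a nontrivial ``defect'' across infinitely many tensor factors. It then suffices to show that \emph{every} outer action $\alpha$ is conjugate to $\gamma$: if $\theta\alpha\theta^{-1}=\gamma=\sigma\beta\sigma^{-1}$ then $\beta=(\sigma^{-1}\theta)\,\alpha\,(\sigma^{-1}\theta)^{-1}$, giving the conjugacy of any two outer actions for free.

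The first step is to move the problem into the asymptotic centralizer. Here I would invoke Connes' characterization of outerness: an automorphism of $R$ is outer precisely when it acts freely on the central sequence algebra $R_\omega = R^{\omega}\cap R'$ for a free ultrafilter $\omega$. Thus the hypothesis that every nontrivial $\alpha_g$ is outer says exactly that the induced action $\alpha_\omega$ of $G$ on $R_\omega$ is free. The second step, the crux, is an equivariant noncommutative Rokhlin lemma: using freeness of $\alpha_\omega$, produce inside $R_\omega$ a partition of unity $(e_g)_{g\in G}$ of equivalent projections permuted by the action, $\alpha_{\omega,g}(e_h)=e_{gh}$, and more generally an approximately $G$-equivariant system of matrix units realizing a copy of $M_{|G|}(\Complex)$ on which $\alpha_\omega$ is conjugate to $\Ad(\lambda)$. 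Read back down from the ultrapower, this says that for every finite $F\subset R$ and $\epsilon>0$ there is a subfactor $Q\cong M_{|G|}(\Complex)$ almost commuting with $F$ in $\|\cdot\|_2$ on which $\alpha$ is within $\epsilon$ of $\Ad(\lambda)$, i.e. $\alpha$ locally looks like one more tensor factor of the model.

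The approximate equivariant embedding produced this way only intertwines $\alpha$ and $\gamma$ up to a unitary cocycle $w\colon G\to U(R)$ satisfying $w_{gh}=w_g\,\alpha_g(w_h)$, whose class lives in $H^2(G,\mathbb{T})$. I would untwist it by solving the coboundary equation approximately, again using freeness on $R_\omega$; it is precisely here that full outerness is needed, since for a non-outer action a genuine characteristic invariant survives and uniqueness fails. With the cocycle killed, I would finish by the Connes--Elliott intertwining-by-bisection technique: iterate the local approximation to build an increasing chain of model tensor factors and inner perturbations $\Ad(u_n)$ with $\sum_n\|u_n-1\|_2<\infty$, so that the partial products converge to an automorphism $\theta=\lim_n \Ad(u_1\cdots u_n)\in\Aut(R)$ with $\theta\alpha_g\theta^{-1}=\gamma_g$ for all $g$. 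The summability is arranged exactly as in Connes' treatment of a single periodic/outer automorphism (the case $G=\Integer$), which is the analytic engine behind the whole scheme.

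The genuinely hard part is the pair of steps taking place inside the central sequence algebra: building the equivariant Rokhlin tower and showing that the resulting $H^2(G,\mathbb{T})$-obstruction vanishes for outer actions. Once the local $G$-equivariant finite-dimensional approximation is in hand, the model, the reduction, and the telescoping limit are all formal.
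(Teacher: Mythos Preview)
The paper does not prove this theorem; it merely cites it from the literature (Connes \cite{MR0448101} for single automorphisms and Jones \cite{MR587749} for finite groups), so there is no ``paper's own proof'' to compare against. Your sketch is, in outline, the standard argument from those references: build a model infinite tensor product action, use freeness on the central sequence algebra $R_\omega$ to obtain an equivariant Rokhlin tower, and run an intertwining argument with summable corrections.

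One point deserves correction. The unitary $1$-cocycle $w\colon G\to U(R)$ with $w_{gh}=w_g\,\alpha_g(w_h)$ has its class in the nonabelian $H^1_\alpha(G,U(R))$, not in $H^2(G,\mathbb T)$. What you must show is that every such $1$-cocycle is a coboundary, i.e.\ $w_g=v^*\alpha_g(v)$ for some unitary $v\in R$; this is precisely the statement that cocycle conjugacy implies conjugacy for outer actions, and it is where the Rokhlin property on $R_\omega$ does the real work. The group $H^2(G,\mathbb T)$ enters Jones' full classification only when one allows nontrivial inner part (a normal subgroup $N\lhd G$ acting by inner automorphisms), as part of the characteristic invariant; under your hypothesis $N=\{e\}$ that invariant is vacuous, so there is no $H^2$ obstruction to discuss. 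With that relabelling, your plan is sound and matches the strategy of the cited papers.
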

It is easy to construct an outer action of any finite group (indeed any subgroup of the unitary group of Hilbert space!)
by outer automorphisms.
 So subfactors are an even better vessel for
groups than fields where the same abstract group can arise from many different Galois 
extensions. A further advantage is that, by \cite{MR0454659}, any subfactor of the hyperfinite $\II_1$ factor is
isomorphic to it (or finite dimensional) so, unlike in ordinary Galois theory where many different field extensions share the same
Galois group, all one sees in a hyperfinite subfactor $M^G \subset M$ is the group $G$!

Using groups as above  one obtains subfactors of index $n$ for all $n\in \mathbb N$ by varying $G$.  In fact, by looking at $M \rtimes H \subset M \rtimes G$, you can build subfactors of index $n = [G:H]$ for every transitive group action $G$ acting on $G/H$.    Now the property
of $\II_1$ factors that most intrigued their discoverers Murray and von Neumann is that the
trace takes \emph{all positive real values} on idempotents in matrix algebras! Thus in principle
the index could be any real number $\geq 1$. But the subject of subfactors got under way with the
result that the index is not always an integer, nor is it any real $\geq 1$, but rather it is
``quantized''.
\begin{theorem}[\cite{MR0696688}] \label{indexlessthanfour} If $[M:N]<4$ then there is a natural number $n\geq 3$ with $$[M:N]=4\cos^2\pi/n.$$
Moreover all these values occur as indices of subfactors, as does any real number $\geq 4$.
\end{theorem}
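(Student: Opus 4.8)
The plan is to turn the single number $\lambda = [M:N]$ into an infinite sequence of projections whose traces must all be nonnegative, and to read off the quantization from that constraint. First I would perform the \emph{basic construction}. The trace on $M$ restricts to a trace-preserving conditional expectation $E_N\colon M \to N$, and letting $e_1$ be the orthogonal projection of $L^2(M)$ onto $L^2(N)$ one forms $M_1 = \langle M, e_1\rangle$, again a $\II_1$ factor with $[M_1:M] = \lambda$. Iterating produces a tower $N \subset M \subset M_1 \subset M_2 \subset \cdots$ and \emph{Jones projections} $e_1, e_2, e_3, \dots$ satisfying the Temperley--Lieb relations $e_i^2 = e_i = e_i^*$, $e_i e_{i\pm1} e_i = \tau\, e_i$ and $e_i e_j = e_j e_i$ for $|i-j| \ge 2$, with $\tau = \lambda^{-1}$. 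The tower carries a single compatible trace, the \emph{Markov trace}, characterized by $\operatorname{tr}(w\, e_{n+1}) = \tau\, \operatorname{tr}(w)$ whenever $w$ lies in the algebra generated by $1, e_1, \dots, e_n$. The whole theorem rests on one fact: because the $e_i$ live inside $\II_1$ factors, this trace is positive and faithful, so the trace of every projection lies in $[0,1]$.

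Second, I would extract the constraint using the \emph{Jones--Wenzl projections}. Define polynomials by $P_0 = P_1 = 1$ and $P_{n+1} = P_n - \tau P_{n-1}$, and set $f_0 = 1$, $f_{n+1} = f_n - \tfrac{P_n}{P_{n+1}} f_n e_{n+1} f_n$. An induction using the relations shows that, as long as $P_1, \dots, P_{n+1}$ are nonzero, each $f_n$ is a genuine projection; and computing with the Markov property (so that $\operatorname{tr}(f_n e_{n+1}) = \tau\, \operatorname{tr}(f_n)$) gives
$$\operatorname{tr}(f_n) = P_{n+1}.$$
Positivity of the trace therefore forces $P_{n+1} = \operatorname{tr}(f_n) \ge 0$ for every $n$ for which the recursion is defined. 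This converts the analytic input into a purely combinatorial question about the signs of the $P_n$.

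Third comes the arithmetic of the recursion, whose characteristic equation is $x^2 - x + \tau = 0$. When $\lambda > 4$ the two roots are real, positive and distinct and one checks $P_n > 0$ for all $n$, while the boundary case $\lambda = 4$ gives the double root $1/2$ and $P_n = (n+1)2^{-n} > 0$; so for $\lambda \ge 4$ there is no obstruction. When $\lambda \in (0,4)$, write $\lambda = 4\cos^2\theta$ with $\theta \in (0,\pi/2)$; the roots are $(2\cos\theta)^{-1} e^{\pm i\theta}$ and
$$P_n = \frac{\sin\big((n+1)\theta\big)}{(2\cos\theta)^n\,\sin\theta}.$$
Since $(2\cos\theta)^n \sin\theta > 0$, the sign of $P_n$ is that of $\sin((n+1)\theta)$. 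If $\theta$ is not of the form $\pi/m$, then $\sin((n+1)\theta)$ stays strictly positive until some first index $n_0$ where it turns negative; then $P_1, \dots, P_{n_0-1} > 0$, so $f_0, \dots, f_{n_0-1}$ are bona fide projections, yet $\operatorname{tr}(f_{n_0-1}) = P_{n_0} < 0$, which is impossible. Hence $\theta = \pi/m$ for some integer $m \ge 3$, giving $\lambda = 4\cos^2(\pi/m)$; here $P_{m-1} = 0$, so $f_{m-2} = 0$ and the chain of projections simply terminates, with no contradiction.

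Finally, for the realization statement I would reverse the construction. Starting from an abstract family of projections satisfying the Temperley--Lieb relations with parameter $\tau^{-1} = \lambda$, for any $\lambda \ge 4$ or $\lambda = 4\cos^2(\pi/m)$ the Markov trace is positive semidefinite, so the GNS construction (after quotienting by the trace kernel in the truncated cases) produces the hyperfinite $\II_1$ factor $R$; setting $M = \{e_1, e_2, \dots\}''$ and $N = \{e_2, e_3, \dots\}''$ then yields a subfactor with $[M:N] = \lambda$. I expect the genuine obstacle to be exactly this positivity: the restriction direction uses only the easy implication ``trace positive $\Rightarrow$ all $P_n \ge 0$'', whereas realization requires its converse --- that nonnegativity of the $P_n$ actually forces the \emph{entire} Markov trace form to be positive semidefinite. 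Establishing this means controlling the full Gram matrix of the trace on each Temperley--Lieb algebra, equivalently decomposing that algebra into matrix blocks (its Bratteli diagram being the Coxeter graph $A_{m-1}$ in the quantized cases), rather than merely testing positivity along the single chain $f_n$.
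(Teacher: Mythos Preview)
Your proposal is correct and follows essentially the same route as the paper's sketch (in \S\ref{sec:TL}): positivity of the Markov trace on Temperley--Lieb forces the traces of the Jones--Wenzl idempotents to be nonnegative, and the Chebyshev-type recursion $P_{n+1}=P_n-\tau P_{n-1}$ then pins down $\tau^{-1}\in\{4\cos^2(\pi/m)\}\cup[4,\infty)$. The only difference is packaging: you work inside the tower of factors produced by the basic construction (as in Jones' original paper), while the survey phrases everything in terms of the sesquilinear form on the abstract Temperley--Lieb planar algebra $TL(\delta)$. Your ``genuine obstacle'' for the realization direction---that positivity of the single chain $\operatorname{tr}(f_n)$ must be upgraded to positivity of the full trace form---is exactly what the paper handles by the remark that the form is inductively positive definite on the ideal complementary to $f^{(n)}$, so only $\operatorname{tr}(f^{(n)})$ remains to check.
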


The numbers $4\cos^2\pi/n$ begin

\begin{center}
\newcommand\T{\rule{0pt}{2.6ex}}       % Top strut
\newcommand\B{\rule[-1.2ex]{0pt}{0pt}} % Bottom strut
\newcommand\TT{\rule{0pt}{3.6ex}}       % Top strut
\newcommand\BB{\rule[-1.8ex]{0pt}{0pt}} % Bottom strut
   \begin{tabular}{l | c|l  }
   $  4\cos^2\pi/3$& 1&1 \T\B \\ \hline
   $4\cos^2\pi/4$  & 2 & 2  \T\B \\ \hline
$4\cos^2\pi/5 $&$ \frac{3+\sqrt{5}}{2} $ &  2.61803  \T\B \\ \hline
   $4\cos^2\pi/6$ & 3 & 3\T\B \\ \hline
 $4\cos^2\pi/7$ &$\frac{5}{3}+\frac{1}{3} \left(\frac{7}{2} \left(1+3 \sqrt{3} i\right)\right)^{1/3}+\frac{7}{3} \left(\frac{7}{2} \left(1+3 \sqrt{3} i\right)\right)^{-1/3}$&3.24698\TT\BB \\ \hline
$4\cos^2\pi/8$ & $2+\sqrt 2$ & 3.41421 \T\B 
   \end{tabular}
\end{center}
(and of course these index values tend to 4 as $n\rightarrow \infty$).
% $$1,\hspace {4pt} 2,\hspace {4pt}\frac{3+\sqrt 5}{2} (\approx 2.618),\hspace {4pt}3,\hspace {4pt}...$$
  So the first subfactor of non-integer index 
has index equal to the square of the golden ratio.

A subfactor of non-integer index cannot come from a finite group but one might hope 
that it can be constructed from some ``group-like'' object.  One source of examples of such ``group-like" objects are Drinfel'd-Jimbo quantum groups at roots of unity \cite{MR797001,MR869575,MR0934283, MR890482, MR1091619, MR1470857, MR901157}.  The place to hunt down all such ``group-like" 
objects is by looking at subfactors of the hyperfinite $\II_1$ factor, for by  Theorem \ref{vjthesis} one would expect
the  structure of the factor itself to be invisible so that the subfactor is given by some
data describing ``pure symmetry''. This data is to be teased out of the subfactor itself. 
This teasing process is the same in spirit as Weyl's approach to group representations \cite{MR0000255}.  Namely, we extract the irreducible representations from the tensor powers of a basic one. The group,
if necessary, could then be constructed by some Tannaka-like duality from the
resulting tensor category \cite{0020.00904, MR1173027, MR0033809, MR1010160}. With 20-20 hindsight, this is exactly what happened in the study of subfactors.
The description of this group-like object will be the topic of the next section. For now we will simply refer to it as the ``standard invariant'' of the subfactor.

Theorem \ref{indexlessthanfour} answers the question ``What are the possible index values
of subfactors?''. But for index at most four we can go a lot further and actually classify
all the subfactors themselves.   In fact, there's an ADE classification of similar flavor to the McKay correspondence for subgroups of $SU(2)$ \cite{MR604577} due to Ocneanu \cite{MR996454} and others.  This was a significant achievement and we will describe it in more detail in Section \ref{sec:index-leq-4}.

The main focus of this paper is on subfactors of the hyperfinite $\II_1$ of index between 4 and 5.  Below index $4$ a deep theorem of Popa's \cite{MR1055708} showed that the standard invariant is a complete invariant of subfactors of the hyperfinite.  However, above index $4$ there are subfactors whose standard invariant gives no information about the subfactor and where Popa's theorem does not apply.  There are many  such subfactors but no progress has been
made in understanding them. These ``bad" subfactors satisfy a techical condition called non-amenability \cite{MR1278111}. 
We have shown that the standard invariant is trivial for any non-amenable subfactor whose index
is between 4 and 5, and  we have  a complete classification
of the standard invariants of amenable subfactors in this index range.  There are precisely five index
values, each one affording two subfactors. We will give a more precise statement in Theorem \ref{main-theorem}.
\begin{center}
\newcommand\T{\rule{0pt}{2.6ex}}       % Top strut
\newcommand\B{\rule[-1.2ex]{0pt}{0pt}} % Bottom strut
\newcommand\TT{\rule{0pt}{3.2ex}}       % Top strut
\newcommand\BB{\rule[-1.8ex]{0pt}{0pt}} % Bottom strut
   
   \begin{tabular}{ l | c | c  }
   subfactor  & index & approximate index \\ \hline
 `Haagerup' &  $   \frac{5+\sqrt{13}}{2} $& 4.30278  \T\B \\ \hline
 `extended Haagerup' &   $ {\frac{8}{3}}+{\frac1 3}\sqrt[3]{\scalebox{0.85} {${\frac{13} 2}(-5-3i\sqrt 3)$}}+{\frac 1 3}\sqrt[3]{\scalebox{0.85}{${\frac{13} 2}(-5+3i\sqrt 3)$}}$  & 4.37720  \TT\BB \\ \hline
 `Asaeda-Haagerup' & $ \frac{5+\sqrt{17}}{2} $ &  4.56155  \T\B \\ \hline
  `3311' & $3+\sqrt 3$ & 4.73205\T\B \\ \hline
  `2221' & $\frac{5+\sqrt{21}}{2}$ & 4.79129\T\B  
   \end{tabular}
\end{center}

At index 5 (again ignoring the ``bad" subfactors) we also have a classification of subfactors with nontrivial standard invariant and we know there is an $\epsilon>0.004$ such that there are no indices of such subfactors in the range $(5,5+\epsilon)$.

This work has been accomplished over a long period of time and has involved many different
people. The completion of the project for indices between 4 and 5 has entailed extensive 
computer calculations. Some of these calculations are rather innocent---those going into
the construction of examples are never any worse than the (exact) calculation of the traces
of the fourth powers of some moderately sized matrices. To exclude all the values between
4 and 5 (except those listed above) involved a systematic enumeration of certain graphs whose norms
are between 2 and $\sqrt 5$.  This calculation requires nothing more than computing graph norms and checking simple combinatorial conditions, and can be checked locally by hand.   But the sheer number of graphs that comes up would make checking the whole calculation very slow.  We have checked this calculation several times on different computers and in different languages, which gives us confidence in the computer calculation.

It would of course be desirable to have  computer-free 
versions of the proofs.   A naive estimate suggests that the classification up to index 5 requires $100$ times the computational effort of Haagerup's classification up to index $3+\sqrt{3}$, so doing the calculation by hand using our techniques would require a Herculean effort.  A proof without a computer thus requires some major new insight. Such insight could come from conformal
field theory---one of the most fascinating questions on subfactors is whether all subfactors are
in some way obtainable from conformal field theory. Although the subfactors
of index $3+\sqrt 3$ and  $\frac{1}{2}(5+\sqrt{21})$ are known to arise from CFT, no such construction is known for the others (although it is argued in \cite{MR2837122} that the
subfactors of index $\frac{1}{2}(5+\sqrt{13})$ ``should'' come from CFT).

\paragraph{\textbf{Can we extend the classification beyond index 5?} }
You might wonder whether we could classify all finite index subfactors.  This is way too ambitious. By  Theorem \ref{vjthesis} this would at the very least entail a complete classification of finite groups. But in fact things are far worse. The intuitive idea that a finite index subfactor corresponds to a finite object is quite wrong. One can construct a finite index subfactor from any finitely
generated group in such a way that the subfactor remembers the Cayley graph of the group. One is thus led
immediately into undecidability questions. And possibly even worse, by \cite{MR2503171} one should 
be able to construct
families whose Borel structure is not countably separated!  And all these ``wild'' behaviours should occur 
for subfactors of index 6. 

Since $6 = 3 \cdot 2$ is a product of allowed index values, a subfactor $N \subset M$ of index $6$ can have an intermediate subfactor $N \subset P \subset M$ \cite{MR1262294, MR1437496}.  Although such a subfactor can be thought of as built by combining two simpler subfactors, it turns out that there is often a bewildering variety of ways to compose them.  In particular, Bisch and Haagerup \cite{MR1386923} show that there's an index $6$ subfactor of the form $R^{\mathbb{Z}/2} \subset R \rtimes \mathbb{Z}/3$ for any of the profusion of quotient groups of the free product $\mathbb{Z}/2 * \mathbb{Z}/3 \cong \mathrm{PSL}_2(\mathbb{Z})$.  We will discuss these and other  ``wildness" results (e.g. \cite{MR2314611}) in \S \ref{Index6}.

\paragraph{\textbf{Should we extend the classification beyond index 5?}}
By the discussion so far the situation is rather intriguing. Up to index 5 we have a nice classification of
subfactors, but at index 6 all hell breaks loose. So somewhere in between 5 and 6 the onset of wildness 
must occur. Indeed we have $5<3+\sqrt 5<6$ and $3+\sqrt 5= \frac{3+\sqrt 5}{2} \times 2$ which
is a product on index values, so it is not implausible that wildness of some sort should begin at
index $3+\sqrt 5$. Of course it could begin earlier though we doubt this. One way to prove that
there is no wildness less than index $3+\sqrt 5$ is to push our classification methods all the way from
5 to $3+\sqrt 5$. 
A number of people have begun working on this and, although there are not yet definitive results, it appears there
are only two subfactors with index in this range! Then to finish the ``onset of wildness'' project we would
have to find large families of subfactors of index $3+\sqrt 5$. There are already signs of an explosion
of subfactors at this index but the situation is not yet clear.

\paragraph{\textbf{Acknowledgements}}
We have many people to thank and acknowledge. Emily Peters in particular has been working with us since the inception of this project, and provided the inspiration for the second and third authors to begin thinking about classifying planar algebras.  We'd also like to thank coauthors Stephen Bigelow, Frank Calegari, Masaki Izumi, David Penneys, Emily Peters, and James Tener.  We would also like to acknowledge the deep impact that the work of Uffe Haagerup and Marta Asaeda had on the development of this project; in particular Haagerup's approach to classifying subfactors of small index provided the backbone of our work.  

In addition, we would also like to thank Dietmar Bisch and Yasuyuki Kawahigashi for their support and interest in this project.  Furthermore, this work would not be possible without deep foundational work by Alain Connes, Adrian Ocneanu, and Sorin Popa.  We are also grateful for the hospitality of several places which hosted us while we were working on this project.  In particular, we'd like to thank Dietmar Bisch, the Shanks family, and Vanderbilt for hosting a conference where this work began.  We're grateful to Kyoto University, the University of Tokyo, Microsoft Station Q,  Canada/USA Mathcamp and the American Institute of Mathematics for hosting.  The second and third authors would also like to thank the Jones family for allowing us to use their home in Bodega Bay for a number of `planar algebra programming camps'.
% no need to mention grants here; done in a footnote on the title page.

\section{An introduction to subfactors}

\subsection{Subfactors and the standard invariant}

\subsubsection{Technical definition of a subfactor and its index.}
We quickly recall some standard definitions, which can be found in any standard text \cite{MR548728, MR0094722}.

A finite von Neumann algebra  $M$ may be defined as a unital Banach *-algebra possessing a linear
trace $\tr:M\rightarrow \mathbb C$ with the properties
\begin{enumerate}[label=(\roman*)]
\item $||a^*a||=||a||^2$,
\item $\tr(ab)=\tr(ba)$,
\item $\tr(1)=1$,
\item $\tr(a^*a)>0$ for all $a \neq 0$, and
\item  the unit ball (for the Banach space structure) of $M$ is complete for the metric defined by $\langle x, y\rangle = \tr(y^* x)$.
\end{enumerate}

A finite von Neumann algebra $M$ is called a \emph{$\II_1$ factor} if it is infinite dimensional
and its centre is one dimensional. The trace $tr$ is then the unique linear functional with 
properties (\romannumeral 2) and (\romannumeral 3).

\begin{definition} A \emph{subfactor} $N$ of a $\II_1$ factor $M$ is a sub *-algebra 
containing the identity of $N$ which is a $\II_1$ factor with the inherited structure.
\end{definition}

\begin{definition} \cite{MR0696688} A subfactor $N\subset M$ will be said to have \emph{finite index} 
if $M$ is a finitely generated projective left $N$-module  under left multiplication, and then
the index $[M:N]$ is the trace of an idempotent in a matrix algebra defining $M$
as a left $N$-module.
\end{definition}

There are a few important technical definitions which we will need below.  A subfactor is called \emph{irreducible} if $M$ is irreducible as an $N$-$M$ bimodule.  A subfactor is called extremal if the normalized traces on $M$ and the commutant $N'$ agree on $M \cap N'$.  Often we focus on the extremal case because it simplifies many technical issues, and because extremality is a relatively mild assumption as all irreducible subfactors are extremal and all finite depth subfactors (defined below) are extremal.

\subsubsection {Examples.}
The simplest way to get a $\II_1$ factor is to take a discrete  group $\Gamma$  and take the commutant of
the left regular representation on $\ell^2(\Gamma)$. This von Neumann algebra is commonly
denoted $L(\Gamma)$ and a trace on it is given by $$\tr(x)=\langle x\xi,\xi\rangle$$ 
where $\xi$ is the characteristic function of the identity in $\Gamma$. The von Neumann algebra $L(\Gamma)$
is a $\II_1$ factor exactly when  all of the non-identity conjugacy
classes of $\Gamma$  are infinite (an `ICC group'). This happens quite often, for instance the free groups $F_n$ for $n\geq 2$.

Note that $L(\Gamma)$ is not a complete invariant of $\Gamma$.  For example, if $\Gamma$ can be realized as a union of finite groups (for example, the group $S_\infty$ of permutations which leave all but finitely many letters fixed), then $L(\Gamma)$ is always the hyperfinite $\II_1$ factor $R$. Famously, it is unknown whether $L(F_n) \iso L(F_m)$ for $n \neq m$, both at least two.

It is easy to construct certain subfactors of $L(\Gamma)$.  If $\Gamma_0\subset \Gamma$ is a subgroup with infinite conjugacy classes one has 
(immediately from the definition):$$[L(\Gamma):L(\Gamma_0)]=[\Gamma:\Gamma_0].$$

There are many other constructions of $\II_1$ factors each of which comes with its supply
of subfactors, but in this paper we want to focus on the abstract symmetry defined
by a subfactor so we will de-emphasize the factors themselves.

\subsubsection{The principal graphs.}
 An algebra is a bimodule over any of its subalgebras, and bimodules over an algebra
have a tensor product structure. So it is natural, given a subfactor $N\subset M$, to
consider the tensor powers $M_k=M\otimes_NM\otimes_N\cdots\otimes_NM$ (with
$k$ copies of $M$ in the tensor product). If $[M:N]$ is
infinite the purely algebraic tensor product may not be appropriate \cite{MR1303779} but if $[M:N]<\infty$ the
purely algebraic tensor product works just as well as Hilbert space based versions \cite{MR2501843}.
Each $M_k$ is infinite dimensional but there are many ways to extract finite dimensional
information from them. Following Weyl \cite{MR0000255} we should decompose the $M_k$ into 
irreducible $N-N$ bimodules or equivalently consider $\End_{N-N}(M_k)$. But we can do
more---since $M_k$ is also an $M-M$ bimodule we can decompose into irreducible
$M-M$, $M-N$, and  $N-M$ bimodules as well. 
\begin{definition} The \emph{principal graph} of $N\subset M$ is the pointed bipartite
graph whose vertices are the (isomorphism classes of) irreducible $N-N$ and $N-M$ bimodules contained 
in all the $M_k$, with  $\dim (\Hom_{N-N}(V,W))$ edges between an  $N-M$ bimodule $V$ and an $N-N$ bimodule $W$.
The distinguished vertex of $\Gamma$ is the $N-N$ bimodule $N$ itself, and we will use a $\star$ to
denote it on the graph.

\end{definition}

The distance from $\star$ to a vertex is called its depth.  Note that the vertices at even depths are $N$-$N$ bimodules while the vertices at odd depths are $N$-$M$ bimodules.

Similarly the ``dual principal graph'' is obtained by restricting irreducible $M-M$ bimodules to $M-N$ bimodules.

We typically indicate slightly more information when giving a pair of principal graphs.  Namely any bimodule has a dual, or contragredient, bimodule.  The dual of an $A$-$B$ bimodule (where $A$ and $B$ are each one of $M$ and $N$) is a $B$-$A$ bimodule, so duals of even vertices are even vertices on the same graph, while duals of odd vertices are odd vertices on the other graph.  We record this duality data by using red tags to indicate duality on even vertices, and by having odd vertices at each depth of each graph at the same relative height as its dual on the other graph.

We say a principal graph is \emph{$n$-supertransitive} if up to distance $n$ from $\star$ the graph is just a linear chain.

This process is a little abstract but can be quite easy in practice and leads immediately
to many interesting questions. The easiest case to understand is the principal graph
for $N\subset N\rtimes G$. It is obvious that, as an $N-N$ bimodule, $N\rtimes G$ is the
direct sum, over $G$, of bimodules which are the trivial $N-N$ bimodules with the right action
twisted by the corresponding group element. Since this set of bimodules is closed under
tensor product we see that the principal graph is as follows (illustrated for the symmetric group
$S_3$), with as many $N-N$ bimodules as the order of the group:
$$\begin{tikzpicture}
\draw[fill] (0,0) node[below] {$\star$} circle (0.05);
\draw (0.,0.) -- (1.,0.);
\draw[fill] (1.,0.) circle (0.05);
\draw (1.,0.) -- (2.,-0.5);
\draw (1.,0.) -- (2.,-0.25);
\draw (1.,0.) -- (2.,0.);
\draw (1.,0.) -- (2.,0.25);
\draw (1.,0.) -- (2.,0.5);
\draw[fill] (2.,-0.5) circle (0.05);
\draw[fill] (2.,-0.25) circle (0.05);
\draw[fill] (2.,0.) circle (0.05);
\draw[fill] (2.,0.25) circle (0.05);
\draw[fill] (2.,0.5) circle (0.05);
\draw[red, thick] (0,0) -- +(0,0.16) ;
\draw[red, thick] (2.,0) -- +(0,0.12) ;
\draw[red, thick] (2.,0.25) -- +(0,0.12) ;
\draw[red, thick] (2.,.5) -- +(0,0.12) ;
\draw[red, thick] (2.,-0.5) to[out=135,in=-135] (2.,-0.25);
\end{tikzpicture}$$
(the red marks indicate bimodules which are dual to each other, corresponding to inverse elements in the group).

With a little more thought one obtains the dual principal graph which also has a single $M-N$ bimodule,
but the $M-M$-bimodules are indexed by the \emph{irreducible representations} of $G$, with as many
edges as dimension of the representation. Thus for $S_3$ we obtain:
$$\begin{tikzpicture}
\draw[fill] (0,0) node[below] {$\star$} circle (0.05);
\draw (0.,0.) -- (1.,0.);
\draw[fill] (1.,0.) circle (0.05);
\draw (1.,0.) -- (2.,-0.25);
\draw (1.,0.) -- node[above] {$2$} (2.,0.25);
\draw[fill] (2.,-0.25) circle (0.05);
\draw[fill] (2.,0.25) circle (0.05);
\draw[red, thick] (0,0) -- +(0,0.16) ;
\draw[red, thick] (2.,-0.25) -- +(0,0.16) ;
\draw[red, thick] (2.,0.25) -- +(0,0.16) ;
\end{tikzpicture}$$

We see that the principal graph and dual principal graph may be different. It is an accident of small index
that all subfactors of index $\leq 4$ have the same principal and dual principal graphs.

The principal and dual principal graphs can be defined slightly differently as recording the fusion rules for tensoring by the basic bimodules $_{N}M_M$ and $_{M}M_N$.  For example, given an $N$-$M$ bimodule $V$ and and $N$-$N$ bimodule $W$, the number of edges from $V$ to $W$ is $\dim (\Hom_{N-N}(V \otimes_M M,W))$.    By Frobenius reciprocity, the multiplicity is also $\dim (\Hom_{N-M}(V,W \otimes_N M))$.  Thus you can read off the rules for tensoring on one side with the basic bimodules from the principal graph and the dual principal graph.  To read off the rules for tensoring on the other side, just use the dual data together with the fact that $(V \otimes X)^* \cong X^* \otimes V^*$.

\begin{definition} A subfactor is of \emph{finite depth} if the principal graph is finite.
\end{definition}

Unlike general finite index subfactors, finite depth subfactors are genuinely finite objects which can be thought of as generalizing finite groups.

Here are some basic facts about the principal graphs referring to a subfactor $N\subset M$ with
principal graph $\Gamma$ and dual principal graph $\Gamma'$.

\begin{definition}
The graph norm $||\Gamma||$ is the operator norm of the adjacency matrix for $\Gamma$.  When $\Gamma$ is finite, this is the largest eigenvalue of the adjacency matrix.
\end{definition}

\begin{fact}
\label{fact:index-and-graph-norm}
$[M:N]\geq ||\Gamma||^2$ with equality when $\Gamma$ is finite.
\end{fact}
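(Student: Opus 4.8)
The plan is to produce an explicit strictly positive eigenvector of the adjacency matrix $A$ of $\Gamma$ whose eigenvalue is $[M:N]^{1/2}$, and then to run a Perron--Frobenius/Schur-test argument. Write $\delta = [M:N]^{1/2}$, and for each irreducible bimodule $X$ occurring as a vertex of $\Gamma$ let $d(X)>0$ be its dimension (the statistical/quantum dimension of the bimodule), normalized so that $d$ is additive over direct sums, multiplicative under $\otimes$, $d({}_N N_N)=1$, and $d({}_N M_M)=\delta$. Since we are in the extremal case, the left and right dimensions coincide and $d$ is unambiguous.

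First I would verify that $d$, regarded as a vector indexed by the vertices of $\Gamma$, satisfies $A d = \delta d$. Index the even vertices (the $N$-$N$ bimodules) by $W_i$ and the odd vertices (the $N$-$M$ bimodules) by $V_j$, so that $A=\left(\begin{smallmatrix}0&\Lambda\\ \Lambda^t&0\end{smallmatrix}\right)$ with $\Lambda_{ij}=\dim\Hom_{N-N}(V_j\otimes_M M,\,W_i)$. Decomposing $V_j\otimes_M M\cong\bigoplus_i \Lambda_{ij}W_i$ as $N$-$N$ bimodules and taking dimensions gives $\delta\,d(V_j)=\sum_i\Lambda_{ij}\,d(W_i)$; by Frobenius reciprocity $\Lambda_{ij}=\dim\Hom_{N-M}(V_j,\,W_i\otimes_N M)$, and decomposing $W_i\otimes_N M$ in the same way yields $\delta\,d(W_i)=\sum_j\Lambda_{ij}\,d(V_j)$. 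Together these two families of identities are exactly the statement $A d=\delta d$.

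Next I would turn this eigenvector into the norm bound by a Schur test. Let $D=\mathrm{diag}(d(v))$; then $D^{-1}AD$ is nonnegative with every row sum equal to $\delta$, which is just $Ad=\delta d$ rewritten. For $x\in\ell^2$ one estimates $|\langle Ax,x\rangle|\le\sum_{u,v}A_{uv}|x_u||x_v|$ and, writing $A_{uv}|x_u||x_v|=A_{uv}\big(d(v)/d(u)\big)^{1/2}|x_u|\cdot\big(d(u)/d(v)\big)^{1/2}|x_v|$, applies $2ab\le a^2+b^2$ together with the symmetry $A_{uv}=A_{vu}$ to collapse the sum to $\sum_u \tfrac{|x_u|^2}{d(u)}(Ad)(u)=\delta\|x\|^2$. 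Since $A$ is self-adjoint this gives $\|\Gamma\|=\|A\|\le\delta$, that is, $[M:N]\ge\|\Gamma\|^2$. When $\Gamma$ is finite it is a finite connected graph, so its adjacency matrix is an irreducible nonnegative symmetric matrix; by Perron--Frobenius it has a unique (up to scale) positive eigenvector, whose eigenvalue is the spectral radius, which for a self-adjoint matrix equals the norm. As $d$ is a positive $\delta$-eigenvector, this forces $\delta=\|\Gamma\|$ and hence equality.

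I expect the main obstacle to be not the linear algebra but the input feeding the eigenvector: setting up the dimension function $d$ with the three properties used above (positivity, multiplicativity, and the normalization $d({}_N M_M)=\delta$). This rests on the theory of dimensions of bimodules over $\II_1$ factors and on Frobenius reciprocity for the relevant $\Hom$ spaces, and it is precisely here that extremality is needed, to guarantee that the left and right dimensions agree so that $d$ is well defined. The remaining technical point---that $A$ is a genuine bounded operator on $\ell^2$ in the infinite-depth case---is automatic once $Ad=\delta d$ holds, since after conjugation by $D$ all row sums are finite and equal to $\delta$.
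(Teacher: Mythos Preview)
Your proof is correct and follows the standard approach. The paper itself does not prove this Fact---it is a survey and merely lists it---but the Fact immediately following it in the paper records precisely the eigenvector you construct (there written as $\stackrel{\rightarrow}{v}$, with $\stackrel{\rightarrow}{v}_V=\dim_N V$ on $N$-$N$ bimodules and $\stackrel{\rightarrow}{v}_V=\sqrt{\dim_N(V)\dim_M(V)}$ on $N$-$M$ bimodules), together with the observation that it is the unique positive eigenvector when $\Gamma$ is finite. Your argument is exactly the one the paper is gesturing at: that Fact supplies the Perron--Frobenius eigenvector, and the norm inequality and the finite-depth equality follow. One small remark: the paper's explicit formula for the eigenvector in terms of the separate left and right von~Neumann dimensions $\dim_N$ and $\dim_M$ sidesteps the need to invoke extremality to identify them, so the input you flagged as the ``main obstacle'' is in fact available without that hypothesis.
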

\begin{fact} The radiuses (from $\star$) of $\Gamma$ and $\Gamma'$ differ by at most one.
\end{fact}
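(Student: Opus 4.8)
The plan is to exploit the contragredient (dual) bimodule functor, which the text has already used to match odd vertices of $\Gamma$ with odd vertices of $\Gamma'$. Write $X = {}_N M_M$ for the basic $N$-$M$ bimodule and $\overline X = {}_M M_N$ for its dual. By definition the depth-$d$ vertices of $\Gamma$ are the irreducible summands of the alternating word $X \otimes_M \overline X \otimes_N X \otimes_M \cdots$ of length $d$ that do not already occur in a shorter word, and likewise the depth-$d$ vertices of $\Gamma'$ are the irreducible summands of $\overline X \otimes_N X \otimes_M \overline X \otimes_N \cdots$ of length $d$. The argument splits into an algebraic step about this duality and a graph-theoretic step about distances.

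First I would show that contragredience restricts to a depth-preserving bijection between the odd vertices of $\Gamma$ and those of $\Gamma'$. Since $(\,\cdot\,)^*$ reverses tensor products, $(V \otimes W)^* \cong W^* \otimes V^*$, and interchanges $X^* \cong \overline X$, dualizing the odd-length word $X \otimes_M \overline X \otimes \cdots \otimes_M X$ of length $2k+1$ yields exactly $\overline X \otimes_N X \otimes \cdots \otimes_N \overline X$ of the same length, which is the length-$(2k+1)$ word defining $\Gamma'$. As $(\,\cdot\,)^*$ is an anti-equivalence it carries irreducibles to irreducibles and summands to summands, so a depth-$(2k+1)$ vertex of $\Gamma$ is sent to a depth-$(2k+1)$ vertex of $\Gamma'$ and back; since $(\,\cdot\,)^{**} \cong \mathrm{id}$ this is a bijection. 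For $N \neq M$ both graphs therefore have odd vertices and share the same largest odd depth $m$ (the case $N = M$ being trivial, with both radii equal to $0$).

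Next I would bound each radius in terms of $m$. Each principal graph is connected and is bipartite, its two sides being the even-depth and odd-depth vertices ($N$-$N$ versus $N$-$M$ bimodules, and similarly for $\Gamma'$); hence every edge joins vertices whose depths, being distances from $\star$ of opposite parity, differ by exactly one. Consequently a vertex realizing the radius $D$ of $\Gamma$ has a neighbour at depth $D-1$: if $D$ is even that neighbour is an odd vertex, forcing $m = D-1$, while if $D$ is odd then $m = D$. Thus the radius of $\Gamma$ lies in $\{m, m+1\}$, and the identical reasoning applies to $\Gamma'$. Since both radii belong to $\{m, m+1\}$, they differ by at most one.

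The only real content is the first step, the depth-preserving duality between the odd parts of the two graphs, and this is where I expect the care to be needed: it rests on contragredience being an anti-equivalence of bimodule categories that intertwines the two alternating towers, together with the standard identification of depth with the minimal word length in which an irreducible first appears. Once that correspondence is established, the remaining estimate is elementary graph theory about distances in a connected bipartite graph, and the bound ``at most one'' is precisely the possible gap between the deepest odd vertex and an even vertex one step farther out.
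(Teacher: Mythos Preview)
The paper merely states this as a Fact without proof, so there is nothing to compare against directly. Your argument is correct and is the standard one: the contragredient sends an irreducible summand of the length-$(2k+1)$ word $X\overline X\cdots X$ to an irreducible summand of the length-$(2k+1)$ word $\overline X X\cdots\overline X$, and conversely, so duality is a depth-preserving bijection between odd vertices of $\Gamma$ and of $\Gamma'$ (this is exactly what the paper alludes to when it says ``duals of odd vertices are odd vertices on the other graph''). The remaining step, that the radius of a connected bipartite graph lies in $\{m,m+1\}$ where $m$ is the maximal odd depth, is the elementary observation you give. Your treatment also covers the infinite-depth case implicitly: if one radius is infinite then $m=\infty$ and hence so is the other.
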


\begin{fact}Write $\stackrel {\rightarrow}{v}$ for the function on the vertices of $\Gamma$ 
whose value $\stackrel{\rightarrow}{v}_V$ on a bimodule is $\displaystyle dim_NV$ for
$N-N$ bimodules and $\displaystyle \sqrt{\dim_N(V)\dim_M(V)}$ for $N-M$ bimodules.
Then $\stackrel {\rightarrow}{v}$ is an eigenvector for the adjacency matrix of $\Gamma$
with eigenvalue $\sqrt{[M:N]}$. When $\Gamma$ is finite this is the unique positive eigenvector.
\end{fact}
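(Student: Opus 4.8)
The plan is to recognise $\vec v$ as the restriction to the vertices of the \emph{statistical dimension} function $d$ on bimodules, and then to read the eigenvector equation straight off the fusion rules encoded in $\Gamma$. First I would recall that (in the extremal case, which is in force throughout) every bimodule $X$ carries a dimension $d(X)$: for an $A$-$B$ bimodule one has $d(X)=\sqrt{\dim_A(X)\dim_B(X)}$, and this quantity is \emph{additive} over direct sums and \emph{multiplicative} under the relative tensor product, $d(X\otimes_B Y)=d(X)\,d(Y)$. With this in hand, $\vec v_V$ is exactly $d(V)$ at every vertex: for an $N$-$N$ bimodule $W$ extremality forces the left and right $N$-dimensions to coincide, so $d(W)=\dim_N(W)$, while for an $N$-$M$ bimodule $V$ the formula is $d(V)=\sqrt{\dim_N(V)\dim_M(V)}$ by definition. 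I would also record the two normalisations $d({}_N M_M)=\sqrt{\dim_N(M)\cdot 1}=\sqrt{[M:N]}$ and, dually, $d({}_M M_N)=\sqrt{1\cdot\dim_N(M)}=\sqrt{[M:N]}$.

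Next I would compute $\Delta\vec v$ one vertex at a time, where $\Delta$ is the adjacency matrix. For an even vertex $W$, the definition of $\Gamma$ together with Frobenius reciprocity says that the relative tensor product decomposes as $W\otimes_N M\cong\bigoplus_V \Delta_{W,V}\,V$, the sum running over the odd vertices. Applying $d$ and using additivity and then multiplicativity gives
\[
(\Delta\vec v)_W=\sum_V \Delta_{W,V}\,d(V)=d\!\left(\bigoplus_V \Delta_{W,V}V\right)=d(W\otimes_N M)=d(W)\,d({}_N M_M)=\sqrt{[M:N]}\;\vec v_W.
\]
The identical argument applied to an odd vertex $V$, using $V\otimes_M M\cong\bigoplus_W \Delta_{V,W}\,W$ and $d({}_M M_N)=\sqrt{[M:N]}$, yields $(\Delta\vec v)_V=\sqrt{[M:N]}\;\vec v_V$. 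Hence $\Delta\vec v=\sqrt{[M:N]}\,\vec v$, which is the eigenvector assertion.

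For the uniqueness statement when $\Gamma$ is finite, I would invoke Perron--Frobenius. The graph $\Gamma$ is connected, since every irreducible bimodule in the standard invariant occurs inside some $M_k$, and $M_k$ is reached from the starred vertex $N$ by repeatedly tensoring with $M$, so each vertex is joined to $\star$ by a path. Thus for finite $\Gamma$ the adjacency matrix $\Delta$ is a nonnegative, symmetric, irreducible matrix, and Perron--Frobenius guarantees a one-dimensional space of eigenvectors with strictly positive entries, attained precisely at the spectral radius $\|\Gamma\|$. Since all von Neumann dimensions are positive, $\vec v$ is such a strictly positive eigenvector, so it is the unique positive eigenvector up to scaling; this is consistent with Fact~\ref{fact:index-and-graph-norm}, as its eigenvalue is $\sqrt{[M:N]}=\|\Gamma\|$.

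The genuinely hard part is not the bookkeeping above but the structural input assumed in the first step: that the bimodule dimension $d$ is well defined, additive, and multiplicative under Connes fusion. Additivity of $\sqrt{\dim_N\dim_M}$ is equivalent (by the Cauchy--Schwarz equality condition) to the ratio $\dim_N(V)/\dim_M(V)$ being the same constant, namely $[M:N]$, for every irreducible $N$-$M$ bimodule in the tower, and this constancy is exactly the content of extremality (sphericality); multiplicativity rests on the analytic theory of the relative tensor product of correspondences. I would treat these as the standard dimension theory for finite-index bimodules and cite it, since establishing them is a separate undertaking from the essentially combinatorial identity that the Fact actually asserts.
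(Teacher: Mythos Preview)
The paper states this as a Fact without proof, so there is no argument to compare against. Your proof is correct and is essentially the standard one: identify $\vec v$ with the statistical dimension, use multiplicativity and additivity of $d$ together with the fusion decomposition $W\otimes_N M\cong\bigoplus_V\Delta_{W,V}V$ (which is exactly how the paper says the principal graph can be read), and then invoke Perron--Frobenius for uniqueness in the finite case. Your remark that additivity of $\sqrt{\dim_N\dim_M}$ on $N$-$M$ bimodules needs extremality is well taken and worth flagging, since the paper is implicitly working in the extremal setting here.
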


\begin{fact} The dimensions $\dim H_0(\otimes^k_NM)=\dim H^0(\otimes^k_NM)$ of the invariant and coinvariant spaces are both given by the number of walks on
$\Gamma$ starting and ending at $\star$ of length $2k$, or the number of walks on
$\Gamma'$ starting and ending at $\star$ of length $2k$.
\end{fact}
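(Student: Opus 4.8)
The plan is to reinterpret both spaces as multiplicities of the trivial bimodule and then count those multiplicities along the principal graph. Write $V=\otimes^k_N M$, regarded as an $N$-$N$ bimodule, and let ${}_N N_N$ denote the trivial bimodule. A bimodule map ${}_N N_N\to V$ is determined by the image of $1$, which must be an $N$-central vector, so $H^0(V)=\{v\in V: nv=vn \text{ for all } n\in N\}\iso\Hom_{N-N}({}_N N_N,V)$, while the coinvariant space $H_0(V)$ is the largest quotient of $V$ on which the two $N$-actions agree. Because the category of $N$-$N$ bimodules occurring here is semisimple (the index is finite, as cited above), both dimensions equal the multiplicity of ${}_N N_N$ in $V$: for $H^0$ this is immediate, and for $H_0$ it follows since a nontrivial irreducible summand contributes nothing to the coinvariants. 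Moreover the $*$-structure on intertwiner spaces gives a conjugate-linear isomorphism $\Hom_{N-N}({}_N N_N,V)\to\Hom_{N-N}(V,{}_N N_N)$, which already yields $\dim H^0=\dim H_0$.

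Next I would express $V$ as an alternating tensor word in the basic bimodule and its dual. Setting $X={}_N M_M$ and $\bar X=X^*={}_M M_N$, one has ${}_N M_N\iso X\otimes_M\bar X$, and therefore
$$\otimes^k_N M\ \iso\ \underbrace{X\otimes_M\bar X\otimes_N X\otimes_M\cdots\otimes_M\bar X}_{2k\text{ factors}}$$
as $N$-$N$ bimodules. I would then compute the multiplicity of ${}_N N_N$ recursively. Tensoring an $N$-$N$ bimodule on the right by $X$ yields an $N$-$M$ bimodule whose irreducible constituents are, by the Frobenius-reciprocity description of the edges recalled above, exactly the odd-depth neighbors in $\Gamma$; tensoring an $N$-$M$ bimodule by $\bar X$ analogously returns the adjacent even-depth vertices. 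Peeling off the $2k$ factors one at a time thus traces out a walk on $\Gamma$ that starts at $\star={}_N N_N$ and alternates between even and odd vertices, and the constituents isomorphic to ${}_N N_N$ correspond precisely to the walks that also return to $\star$. Hence $\dim H^0=\dim H_0=(A^{2k})_{\star\star}$, the number of closed walks of length $2k$ based at $\star$, where $A$ is the adjacency matrix of $\Gamma$.

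Finally, to obtain the same number as a walk count on $\Gamma'$, I would invoke the rotational (spherical) symmetry of the standard invariant. Since $X^{**}\iso X$, repeated use of the duality adjunction lets one cyclically permute the alternating word above, at the expense of moving the ambient category from $N$-$N$ to $M$-$M$; concretely
$$\Hom_{N-N}\bigl({}_N N_N,\,(X\otimes_M\bar X)^{\otimes_N k}\bigr)\ \iso\ \Hom_{M-M}\bigl({}_M M_M,\,(\bar X\otimes_N X)^{\otimes_M k}\bigr).$$
The right-hand side is computed exactly as in the previous step but with the roles of $N$ and $M$ exchanged, so it equals the number of closed walks of length $2k$ based at $\star$ on the dual principal graph $\Gamma'$.

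The step I expect to be the main obstacle is this last rotation: one must justify that the loop multiplicity does not depend on whether the alternating word is rooted in the $N$-$N$ or the $M$-$M$ category. This is exactly sphericality of the bimodule $2$-category, which in the subfactor setting is furnished by the Markov trace together with the compatibility of the left and right duals of $X$; pinning it down carefully, along with checking that the purely algebraic tensor products are well behaved (valid here by finiteness of the index), is the real content. By comparison, the middle recursion is just an unwinding of the Frobenius-reciprocity definition of the principal graph, and the equality $\dim H^0=\dim H_0$ is a formal consequence of semisimplicity and the $*$-structure.
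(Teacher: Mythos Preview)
The paper does not actually prove this statement; it is listed among a sequence of basic facts about principal graphs and left unproved. Your argument is correct and is the standard one: identify $H^0$ and $H_0$ with $\Hom_{N-N}({}_NN_N,V)$ and $\Hom_{N-N}(V,{}_NN_N)$ via semisimplicity, unfold $\otimes^k_N M$ as $(X\otimes_M\bar X)^{\otimes_N k}$, and read off the multiplicity of the trivial as a closed-walk count on $\Gamma$ using the very definition of the edges.

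One small correction concerns your final step. The equality of loop counts on $\Gamma$ and $\Gamma'$ does not require sphericality or the Markov trace; it is a consequence of rigidity (Frobenius reciprocity) alone. Concretely, applying the adjunctions for $X$ and $\bar X$ twice gives
\[
\Hom_{N-N}\bigl(N,(X\bar X)^{k}\bigr)\ \iso\ \Hom_{N-M}\bigl(X,\,X\otimes(\bar X X)^{k-1}\bigr)\ \iso\ \Hom_{M-M}\bigl(\bar X X,(\bar X X)^{k-1}\bigr)\ \iso\ \Hom_{M-M}\bigl(M,(\bar X X)^{k}\bigr),
\]
the last isomorphism using that $\bar X X$ is self-dual. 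Sphericality would be needed if you were matching \emph{traces} on the two sides, but here you are only matching dimensions of $\Hom$-spaces, and for that rigidity suffices. So the ``main obstacle'' you flag is in fact not an obstacle at all.
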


\subsection{The standard invariant and reconstruction.}

The principal graphs capture the combinatorics of tensoring with the basic bimodules, but the full structure of these bimodules contains more than just the combinatorics.  We have a collection of bimodules (of four flavors: $N$-$N$, $N$-$M$, $M$-$N$, and $M$-$M$), maps between bimodules, and ways of taking duals and tensor products of bimodules and maps.  All of this information is called the standard invariant of $N \subset M$. 
But what kind of algebraic structure is the standard invariant?  If we were only looking at one flavor of bimodule (say $N$-$N$), this collection would have the structure of a tensor category.  Here we have a slightly different setup where there are two tensor products $\otimes_N$ and $\otimes_M$, but it is not difficult to modify the definition of a tensor category to allow this slightly more general situation.  

Modern higher categorical language gives one way of summarizing the structure on the standard invariant. Just as a monoid is the same thing as a category with only one object (the elements becoming $1$-morphisms), a tensor category can be thought of as a $2$-category with one object (the objects becoming $1$-morphisms, and the morphisms becoming $2$-morphisms).  From this point of view, the standard invariant becomes a $C^*$-2-category with two objects (corresponding to $M$ and $N$) together with a choice of generating $1$-morphism (corresponding to ${}_N M_M$).  The $N$-$N$ bimodules and the $M$-$M$ bimodules are called the even parts of the standard invariant, and the $N$-$M$ bimodules are called the odd part.

In fact the first axiomatizations of the standard invariant (by Ocneanu \cite{MR996454} and Popa \cite{MR1055708,MR1334479}) were combinatorial, based on the principal graphs, and
did not use categorical language at all. We will explain Ocneanu's cell calculus below, and Popa's  $\lambda$-lattices when we
have defined planar algebras which allow a diagrammatic descripition of $\lambda$-lattices. But both approaches turn around
a \emph{pair of towers of finite dimensional C$^*$-algebras}:
$$\begin{array} {lllll}
B_0 \subset &  B_1 \subset & B_2 \subset &\cdots \subset &B_n \subset \cdots \\
\cup            &\cup              &\cup             &                        &\cup                  \\
A_0 \subset & A_1 \subset & A_2 \subset &\cdots \subset &A_n \subset \cdots
\end{array}$$
together with a coherent trace on all these algebras. This data allows us to complete the unions of the $A_n$ and $B_n$
to obtain von Neumann algebras $A$ and $B$ respectively, with $A\subset B$.
\begin{theorem} {\cite{MR1055708}} If $M$ is the hyperfinite II$_1$ factor and the principal graph of the 
subfactor $N\subset M$ is finite, suppose $A\subset B$ is constructed from the standard invariant as above.
Then the inclusion $N\subset M$ is isomorphic to the inclusion $A\subset B$.
\end{theorem}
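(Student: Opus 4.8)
The plan is to split the statement into two movements: first build the inclusion $A \subset B$ and identify its invariants with the given combinatorial data, and then upgrade that identification to an honest isomorphism of inclusions using amenability. To construct $A \subset B$, I would form the von Neumann (weak) completions $A = \overline{\bigcup_n A_n}$ and $B = \overline{\bigcup_n B_n}$ in the GNS representation furnished by the coherent trace. Each is an increasing union of finite-dimensional $C^*$-algebras carrying a faithful trace, hence approximately finite-dimensional; connectedness of the standard invariant gives triviality of the centers, so $A$ and $B$ are hyperfinite $\II_1$ factors, each isomorphic to $R$ as a single factor by Connes' uniqueness theorem. The crucial role of the coherence of the trace is that it is exactly the Markov condition making each step $B_n \subset B_{n+1}$ a Jones basic construction; iterating realizes $B$ as the Jones tower over $A \subset B$ and, via the positive-eigenvector computation, gives $[B:A] = \|\Gamma\|^2$, which is finite because $\Gamma$ is finite (Fact~\ref{fact:index-and-graph-norm}).

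By the way the tower was assembled, the relative commutants of the Jones tower of $A \subset B$ recover precisely the original grid $A_n \subset B_n$, together with its inclusions, trace, conditional expectations, and Jones projections. Hence $A \subset B$ has exactly the same standard invariant as $N \subset M$, and the theorem reduces to a uniqueness assertion: two finite-depth subfactors of the hyperfinite $\II_1$ factor sharing a standard invariant are isomorphic as inclusions, with the completed model above serving as the canonical representative.

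This reduction is where the real work lies, and it is the content of Popa's strong-amenability theorem. The key structural input is that finite depth forces amenability: when $\Gamma$ is finite one has $\|\Gamma\|^2 = [M:N]$ (Fact~\ref{fact:index-and-graph-norm}), which is precisely the F{\o}lner-type condition expressing that the standard invariant is ``large'' inside $M$ in the $2$-norm. Concretely, I would establish that the finite-dimensional higher relative commutants $N' \cap M_n$ and $M' \cap M_n$, all of which live in the standard invariant, form an exhausting net approximating arbitrary elements in $\|\cdot\|_2$ after conjugation by unitaries drawn from the invariant. With this approximate-intertwining property in hand for both $N \subset M$ and $A \subset B$, a back-and-forth argument matches the finite-dimensional approximants on the two sides and patches the resulting partial, almost trace-preserving isomorphisms into a genuine isomorphism $M \to B$ carrying $N$ onto $A$.

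The hard part will be exactly this final intertwining---promoting the algebraic coincidence of standard invariants to a spatial isomorphism of the inclusions. It is here that amenability is indispensable, and the argument runs closely parallel to Connes' proof that an amenable $\II_1$ factor is hyperfinite: one must produce almost-multiplicative, almost-intertwining maps and control the errors tightly enough to pass to a limit, with the F{\o}lner estimates supplied by finiteness of $\Gamma$. This is also why the finite-depth hypothesis cannot be dropped: without it the conclusion genuinely fails, so no softer argument bypassing the amenability estimates can succeed.
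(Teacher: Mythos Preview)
The paper does not give its own proof of this theorem; it is simply stated with the citation to Popa \cite{MR1055708} and the exposition moves on (with only a parenthetical remark about isomorphism versus anti-isomorphism). This is a survey article, and Popa's reconstruction theorem is quoted as a foundational black box, not reproved. So there is nothing in the paper to compare your proposal against.

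That said, your outline is a reasonable high-level sketch of the architecture of Popa's actual argument: build the model inclusion $A\subset B$ as the trace completion of the tower, check that its standard invariant recovers the given one, and then reduce to a uniqueness statement for finite-depth hyperfinite subfactors which is established via amenability and a Connes-style intertwining. You correctly flag the last step as the hard part and correctly identify finite depth as the source of the needed F{\o}lner-type estimates.

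A couple of places where the sketch is loose, should you want to tighten it: factoriality of $A$ and $B$ is not simply ``connectedness'' but uses the full $\lambda$-lattice axioms (Markov/commuting-square conditions and the standardness of the trace); and the sentence ``iterating realizes $B$ as the Jones tower over $A\subset B$'' conflates the finite-dimensional grid $(A_n\subset B_n)$ with the Jones tower of the completed inclusion---what one actually shows is that the higher relative commutants of the tower for $A\subset B$ reproduce the grid, not that the $B_n$ are the tower itself. These are exactly the technicalities Popa's paper handles and a survey would not, so they are not gaps in your strategic picture.
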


(Depending on your conventions for this tower, it may be an anti-isomorphism rather than an isomorphism.)
Popa went on in \cite{MR1278111} to prove even more powerful classification results when $\Gamma$ is infinite but they require subtle
notions of amenability of the standard invariant so we refrain from stating his theorem. If $\Gamma$ is infinite, neither $A$
nor $B$ need be factors. Note that although all subfactors with $[M:N] \leq 4$ are amenable, when $[M:N]>4$ and the principal graph is the one-sided infinite Dynkin diagram $A_\infty$ then the subfactor is \emph{not} amenable and Popa's results are not available.

Because of these problems with $A\subset B$ it is not clear that there is a hyperfinite subfactor having a given
standard invariant. It is however true \cite{MR1334479}  that the map from subfactors to standard invariants is surjective, and in fact
by \cite{MR2051399} there is, to every standard invariant, a subfactor of $L(F_\infty)$ which realises that standard
invariant. (If the principal graph is finite there is no issue---there is a hyperfinite subfactor realising the standard invariant.)

Outside the hyperfinite world one would not expect classification results.
Radulescu \cite{MR1426841} has shown that there are at least two outer involutory 
automorphisms of free group factors and thus two index two subfactors.  In the opposite direction, Vaes \cite{MR2471930} constructed factors with no irreducible subfactors at all!  Since there are many constructions producing a new subfactor from an old one using its standard invariant, it is not reasonable to expect factors with only one nontrivial subfactor.  Nonetheless, Falgui\`eres and Raum \cite{MR3028581} have proved a precise result saying that for any fixed finite depth subfactor planar algebra, there's a factor whose only finite index subfactors are those subfactors guaranteed by the planar algebra itself.  These constructions use Popa's theory of deformation rigidity \cite{MR2215135, MR2231961, MR2231962, MR2334200}.

\subsection{Other perspectives on the standard invariant}

\subsubsection{Cell calculus}
\label{sec:cell-calculus}

In this section we explain Ocneanu's approach to the standard invariant.  The key notion here is a ``connection" which is a certain collection of numbers attached to certain ``cells" and which play the role of 6j symbols.  We will define each of these notions below, and sketch how to get a connection from a subfactor.

First we introduce a generalization of the principal graph.  The various bimodules appearing on the principal graphs can be arranged on a square of graphs as below with edges connecting the vertices according to the principal graphs. We call this the 4-partite principal graph.
$$\begin{tikzpicture}
\node (NN) [fill=red-unshaded] {${}_N \mod_N$};
\node (NM) [fill=red-shaded] at ($(NN)+(3,0)$) {${}_N \mod_M$};
\node (MN) [fill=blue-shaded] at ($(NN)+(0,3)$) {${}_M \mod_N$};
\node (MM) [fill=blue-unshaded] at ($(NN)+(3,3)$) {${}_M \mod_M$};
\draw (NN) -- node[below] {$\Gamma$} (NM);
\draw (NN) -- node[left] {$\Gamma$} (MN);
\draw (NM) -- node[right] {$\Gamma'$} (MM);
\draw (MN) -- node[above] {$\Gamma'$} (MM);
\end{tikzpicture}$$

See Figure \ref{fig:haagerup-4} for an example.
\begin{figure}[ht]
$$
\left(\Gamma=\begin{tikzpicture}[baseline=-2]
\draw[fill] (0,0) node[below] {$\star$}circle (0.05);
\draw (0.,0.) -- (1.,0.);
\draw[fill] (1.,0.) circle (0.05);
\draw (1.,0.) -- (2.,0.);
\draw[fill] (2.,0.) circle (0.05);
\draw (2.,0.) -- (3.,0.);
\draw[fill] (3.,0.) circle (0.05);
\draw (3.,0.) -- (4.,-0.5);
\draw (3.,0.) -- (4.,0.5);
\draw[fill] (4.,-0.5) circle (0.05);
\draw[fill] (4.,0.5) circle (0.05);
\draw (4.,-0.5) -- (5.,-0.5);
\draw (4.,0.5) -- (5.,0.5);
\draw[fill] (5.,-0.5) circle (0.05);
\draw[fill] (5.,0.5) circle (0.05);
\draw (5.,-0.5) -- (6.,-0.5);
\draw (5.,0.5) -- (6.,0.5);
\draw[fill] (6.,-0.5) circle (0.05);
\draw[fill] (6.,0.5) circle (0.05);
\draw[red, thick] (0.,0.) -- +(0,0.333333) ;
\draw[red, thick] (2.,0.) -- +(0,0.333333) ;
\draw[red, thick] (4.,-0.5) -- +(0,0.333333) ;
\draw[red, thick] (4.,0.5) -- +(0,0.333333) ;
\draw[red, thick] (6.,-0.5) to[out=135,in=-135] (6.,0.5);
\end{tikzpicture}, \Gamma'=\begin{tikzpicture}[baseline=-2]
\draw[fill] (0,0) node[below] {$\star$}circle (0.05);
\draw (0.,0.) -- (1.,0.);
\draw[fill] (1.,0.) circle (0.05);
\draw (1.,0.) -- (2.,0.);
\draw[fill] (2.,0.) circle (0.05);
\draw (2.,0.) -- (3.,0.);
\draw[fill] (3.,0.) circle (0.05);
\draw (3.,0.) -- (4.,-0.5);
\draw (3.,0.) -- (4.,0.5);
\draw[fill] (4.,-0.5) circle (0.05);
\draw[fill] (4.,0.5) circle (0.05);
\draw (4.,-0.5) -- (5.,-0.5);
\draw (4.,-0.5) -- (5.,0.5);
\draw[fill] (5.,-0.5) circle (0.05);
\draw[fill] (5.,0.5) circle (0.05);
\draw[red, thick] (0.,0.) -- +(0,0.333333) ;
\draw[red, thick] (2.,0.) -- +(0,0.333333) ;
\draw[red, thick] (4.,-0.5) -- +(0,0.333333) ;
\draw[red, thick] (4.,0.5) -- +(0,0.333333) ;
\end{tikzpicture}\right)
$$
$$
\begin{tikzpicture}[baseline,x=8mm,y=8mm]
	\node at (-5,1.5) {$\Gamma\,\Big\{$};
	\node at (-5,-.5) {$\Gamma'\,\Big\{$};
	\node  [fill=red-unshaded] at (-3.5,2) {$\sb{N}{\sf{Mod}}_N$};
	\filldraw (-2,2) node[below] {$\star$}circle(1mm) ; 
	\filldraw (0,2) circle (1mm);	
	\filldraw (2,2) circle (1mm); 
	\filldraw (3,2) circle (1mm);
	\filldraw (6,2) circle (1mm);
	\filldraw (7,2) circle (1mm); 
	\node  [fill=red-shaded]  at (-3.5,1) {$\sb{N}{\sf{Mod}}_M$};
	\filldraw (-1,1) circle (1mm);  
		\draw (-1,1)--(-2,2);  
		\draw (-1,1)--(-2,0);
		\draw (-1,1)--(0,2);  
		\draw (-1,1)--(0,0); 
	\filldraw (1,1) circle (1mm);  
		\draw (1,1)--(0,2);  
		\draw (1,1)--(0,0);
		\draw (1,1)--(2,2);  
		\draw (1,1)--(2,0);
		\draw (1,1)--(3,2);  
		\draw (1,1)--(3,0);  		
	\filldraw (4,1) circle (1mm); 
		\draw (4,1)--(2,2);  
		\draw (4,1)--(2,0);
		\draw (4,1)--(6,2);  
	\filldraw (5,1) circle (1mm); 
		\draw (5,1)--(3,2);  
		\draw (5,1)--(2,0);
		\draw (5,1)--(7,2);	
	\node  [fill=blue-unshaded] at (-3.5,0) {$\sb{M}{\sf{Mod}}_M$};  
	\filldraw (-2,0) node[below] {$\star$}circle (1mm); 
	\filldraw (0,0) circle (1mm);
	\filldraw (2,0) circle (1mm);
	\filldraw (3,0) circle (1mm); 
	\node  [fill=blue-shaded] at (-3.5,-1) {$\sb{M}{\sf{Mod}}_N$};
	\filldraw (-1,-1) circle (1mm);  
		\draw (-1,-1)--(-2,-2);  
		\draw (-1,-1)--(-2,0);
		\draw (-1,-1)--(0,-2);  
		\draw (-1,-1)--(0,0); 
	\filldraw (1,-1) circle (1mm);  
		\draw (1,-1)--(0,-2);  
		\draw (1,-1)--(0,0);
		\draw (1,-1)--(2,-2);  
		\draw (1,-1)--(2,0);
		\draw (1,-1)--(3,-2);  
		\draw (1,-1)--(3,0);  		
	\filldraw (4,-1) circle (1mm); 
		\draw (4,-1)--(2,-2);  
		\draw (4,-1)--(2,0);
		\draw (4,-1)--(7,-2);  
	\filldraw (5,-1) circle (1mm);
		\draw (5,-1)--(3,-2);  
		\draw (5,-1)--(2,0);
		\draw (5,-1)--(6,-2);  
	\node  [fill=red-unshaded] at (-3.5,-2) {$\sb{N}{\sf{Mod}}_N$};	
	\filldraw (-2,-2) node[below] {$\star$}circle (1mm); 
	\filldraw (0,-2) circle (1mm);
	\filldraw (2,-2) circle (1mm); 
	\filldraw (3,-2) circle (1mm);
	\filldraw (6,-2) circle (1mm);
	\filldraw (7,-2) circle (1mm); 
\end{tikzpicture}$$
\caption{The principal graphs for the Haagerup subfactor described in \S \ref{Haagerup}, as well as the 4-partite principal graph. 
In the 4-partite graph the rows of vertices correspond to the $N-N$ bimodules, the $N-M$ bimodules, the $M-M$ bimodules, the $M-N$ bimodules, and finally the $N-N$ bimodules again. Vertices  are ordered lexicographically by depth and height in $\Gamma$ or $\Gamma'$.}
\label{fig:haagerup-4}
\end{figure}

A ``cell" is a directed based loop formed by four edges, one taken from each of the four graphs.  A connection is an assignment of numbers to each cell satisfying certain properties. 
To compute the connection from a subfactor, first make a choice of basis for each basic hom space $\Hom(V \otimes X, W)$ (where $X$ is ${}_N M_M$ or ${}_M M_N$, whichever is appropriate) and each space $\Hom(X \otimes V, W)$.  Using these choices we can think of each edge in the $4$-partite graph as giving an explicit bimodule map.  Thus each cell gives two different maps $\Hom_{M-M}(X \otimes V \otimes X, U)$ coming from the two ways of going halfway around the cell.  Since $U$ is simple, these two maps differ by a scalar, which assigns a number to each cell.  This number can be thought of as an associator or $6j$-symbol for tensoring with the basic bimodules.

These numbers attached to cell  satisfy many properties, involving also the duality maps among the irreducible bimodules.  This can be axiomatized by saying that the connection is biunitary and flat.  We postpone the definitions of these two notions until the discussion of graph planar algebras in \S \ref{sec:GPA}, but for now we note that biunitarity is easy to check, while flatness is much more mysterious and difficult to compute.

\subsubsection{Planar Algebras}  

Recall that the standard invariant is roughly a bunch of hom spaces together with the operations of composition and tensor product.  However, because of Frobenius reciprocity, you can identify any hom space with some invariant space  $H_0(\otimes^n_NM)$.  From this point of view, composition and tensor product no longer look special, and there are many other equally valid compositions.  Planar algebras are a coordinate free approach to the standard invariant which emphasize this multitude of operations.

The notion of a (colored) operad makes rigorous this idea of a multitude of operations \cite{MR0420610}: each element of an operad gives an operation and operations can be composed using the operad structure.  For example, there's an operad of words where the composition is given by substitution.  An algebra over an operad is a vector space on which all of these operations make sense, so an algebra over the operad of words is an associative algebra.

A  planar algebra is an algebra over the planar operad. Elements of the planar operad are ``planar tangles'' whose definition we
avoid by giving an example:
$$\mathfig{0.35}{tangle}$$

The smooth curves joining the boundary points of the discs are called strings. A tangle may be glued into an internal disc of another tangle provided the starred boundary  points, and the number of strings meeting the
glueing disc match. (This forces us to consider strings that form closed loops not touching any discs.)
This is the operad structure. Thus a planar algebra is an $\mathbb N$-graded vector space 
whose grading corresponds to the number of boundary points of a disc, and every tangle
determines a multilinear map from the product of the vector spaces associated with the
input discs to the vector space of the output disc. These  maps must only depend on the tangle up
to isotopy and are compatible in the
obvious way with the gluing.  See \cite{math.QA/9909027, JonesPANotes} for further details.

One example of a planar algebra is the planar algebra of  linear combinations of $3$-dimensional tangles (that is links with boundaries).  Here the vector space attached to a disc with $n$ boundary points has as a basis the set of all isotopy classes (rel boundary) of tangles in the disc cross $I$ with $n$ boundary points lying on the equator.  This is an algebra for the planar operad because we can plug $3$-dimensional tangles into the holes in a planar tangle to get a new $3$-dimensional tangle.

The key point here is that each planar tangle gives an operation.  For example, composition and tensor product are given by the operations below.%
%
%\beginpgfgraphicnamed{diagrams/tikz/#1-external}%
%
%\endpgfgraphicnamed
%
$$%
%\beginpgfgraphicnamed{diagrams/tikz/#1-external}%
 \begin{tikzpicture}[PAdefn]
	\clip [draw] (2,2) arc (0:180:2cm) -- (-2,-2) arc (-180:0:2cm) -- (2,2);
	
	%first draw the lines
	\draw (0,2) .. controls ++(-150:1.5cm) and ++(150:1.5cm) .. (0,-2) .. controls ++(110:1.5cm) and ++(-110:1.5cm) .. (0,2);

	\draw (0,2) .. controls ++(-30:1.5cm) and ++(30:1.5cm) .. (0,-2);
	
	\draw (0,2) -- +(110:3cm) -- +(130:3cm) -- (0,2);
	\draw (0,2) -- ++(50:3cm);
		
	\draw (0,-2) -- ++(-50:3cm);
	\draw (0,-2) -- +(-110:3cm) -- +(-130:3cm) -- (0,-2);
	
	%decorate with dots, labels and stars
	\node at (.4,0) {$\ldots$};
	\node at (.2,3.2) {$\dots$};
	\node at (.2,-3.2) {$\dots$};

	\node at (0,2) [Tbox,ultra thick, inner sep=1.4mm] (A) {\small{\textcolor{gray}{1}}};
	\node at (0,-2) [Tbox,ultra thick, inner sep=1.4mm] (B) {\small{\textcolor{gray}{2}}};
	\node at (A.180) [left] {$\star$};
	\node at (B.180) [left] {$\star$};
	\node at (-1.5,2.5)  {$\star$};	
	
	%redraw boundary
	\draw[ultra thick] (2,2) arc (0:180:2cm) -- (-2,-2) arc (-180:0:2cm) -- (2,2);
\end{tikzpicture}%
%\endpgfgraphicnamed
 \qquad \qquad %
%\beginpgfgraphicnamed{diagrams/tikz/#1-external}%
 \begin{tikzpicture}[PAdefn]
	\clip [draw] (2,2) arc (90:-90:2cm) -- (-2,-2) arc (-90:-270:2cm) -- (2,2);
	
	%first draw the lines	
	\draw (2,0) -- +(110:3cm) -- +(130:3cm) -- (2,0);
	\draw (2,0) -- ++(50:3cm);
	\draw (2,0) -- ++(-50:3cm);
	\draw (2,0) -- +(-110:3cm) -- +(-130:3cm) -- (2,0);
		
	\draw (-2,0) -- +(110:3cm) -- +(130:3cm) -- (-2,0);
	\draw (-2,0) -- ++(50:3cm);
	\draw (-2,0) -- ++(-50:3cm);
	\draw (-2,0) -- +(-110:3cm) -- +(-130:3cm) -- (-2,0);
	
	%decorate with dots, labels and stars
	\node at (-1.8,1.2) {$\dots$};
	\node at (-1.8,-1.2) {$\dots$};
	\node at (2.2,1.2) {$\dots$};
	\node at (2.2,-1.2) {$\dots$};

	\node at (2,0) [Tbox,ultra thick, inner sep=1.4mm] (A) {\small{\textcolor{gray}{2}}};
	\node at (-2,0) [Tbox,ultra thick, inner sep=1.4mm] (B) {\small{\textcolor{gray}{1}}};
	\node at (A.180) [left] {$\star$};
	\node at (B.180) [left] {$\star$};
	\node at (-3.7,0)  {$\star$};	
	
	%redraw boundary
	\draw[ultra thick] (2,2) arc (90:-90:2cm) -- (-2,-2) arc (-90:-270:2cm) -- (2,2);
\end{tikzpicture}%
%\endpgfgraphicnamed
$$

A \emph{subfactor planar algebra} is a special kind of planar algebra. We ask first of all
that the regions of the tangle are alternately unshaded and shaded (corresponding to $N$ and $M$) which forces all the
discs to have an even number of boundary points. If we call $P_{n,\pm}$ the vector space 
corresponding to $2n$ boundary points, with the starred region unshaded for $\pm=+$ and shaded for $\pm=-$, then we insist that:
\begin{enumerate}[label=(\roman*)]
\item $\dim P_{n,\pm}<\infty$,
\item there is a conjugate-linear involution $*$ on each $P_{n,\pm}$ compatible with
orientation reversing diffeomorphisms of the tangles,
\item \label{subfactor-PA-evaluable} $\dim P_{0,\pm}=1$, and
\item\label{subfactor-PA-positive}  the sesquilinear form defined by 
$$
\langle a, b \rangle = 
\begin{tikzpicture}[baseline=16,x=0.7cm,y=0.7cm]
\draw[thick] (-0.4,-0.5) -- (-0.4,2.5) arc (180:0:1.6) -- (2.8,-0.5) arc (0:-180:1.6);
\draw[thick] (0.4,-0.5) -- (0.4,2.5) arc (180:0:0.8) -- (2,-0.5) arc (0:-180:0.8);
\node at (0,1) {$\cdots$};
\node at (2.4,1) {$\cdots$};
\node[draw, circle,  ultra thick, fill=white, inner sep=7pt] (a) at (0,0) {$a^*$};
\node[draw, circle,  ultra thick, fill=white, inner sep=8pt] (b) at (0,2) {$b$};
\node[left] at (a.180) {$\star$};
\node[left] at (b.180) {$\star$};
\end{tikzpicture}
$$
 is positive definite.
\end{enumerate}

Condition \ref{subfactor-PA-evaluable} above, along with the canonical `empty diagram' element in $P_{0,\pm}$, allows us to identify $P_{0,\pm} \iso \Complex$. The sesquilinear form defined in condition \ref{subfactor-PA-positive} is thus, like all closed diagrams, valued in $\Complex$.

Oftentimes we also want to require the following condition, which corresponds to extremality of the subfactor.

\begin{enumerate}[resume*]
 \item \label{subfactor-spherical} A planar algebra is spherical if for any element $x$ of $P_{1,+}$, the following two closed diagrams are equal as elements of $P_{0,\pm} \iso \Complex$.
 \begin{equation*}
 \begin{tikzpicture}[baseline=1]
\node[draw,fill=white,circle, ultra thick] (a) at (1,0) {$x$};
\node[left=-2pt] at (a.180) {$\star$};
\begin{pgfonlayer}{background}
    \fill[white] (0.25,-0.8) rectangle (1.75,0.8);
    \filldraw[thick,shaded] (a.north) arc (180:0:0.3) -- +(0,-0.57) arc (360:180:0.3);
\end{pgfonlayer}
 \end{tikzpicture}
=
 \begin{tikzpicture}[baseline=1]
\node[draw,fill=white,circle, ultra thick] (a) at (1,0) {$x$};
\node[left=-2pt] at (a.180) {$\star$};
\begin{pgfonlayer}{background}
    \fill[shaded] (0.25,-0.8) rectangle (1.75,0.8);
    \filldraw[thick,fill=white] (a.north) arc (0:180:0.3) -- +(0,-0.57) arc (180:360:0.3);
\end{pgfonlayer}
 \end{tikzpicture}
 \end{equation*}
\end{enumerate}

\begin{theorem} The spaces $H_0(\otimes^n_NM)$ admit a canonical subfactor planar algebra structure.
\end{theorem}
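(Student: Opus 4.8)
The plan is to build the planar algebra explicitly out of the bimodule category of $N\subset M$, using the graphical calculus of a rigid $C^*$-tensor category. First I would fix the underlying graded vector space: by the Frobenius reciprocity dictionary recalled above, each coinvariant space $H_0(\tensor^n_N M)$ is canonically a space of bimodule intertwiners, and I would take $P_{n,+}$ (outer region unshaded) to be this space read with $N$ as the outer algebra, and $P_{n,-}$ (outer region shaded) the analogous space with $M$ as the outer algebra. The translation between pictures and algebra is then: a string denotes the basic bimodule ${}_N M_M$ (or, read against its orientation, ${}_M M_N$), an unshaded region denotes $N$, and a shaded region denotes $M$.

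Next I would define the action of a planar tangle. Up to isotopy every tangle is a composite of a short list of elementary tangles: multiplication and unit pieces coming from the algebra maps $m\colon M\tensor_N M\to M$ and $N\hookrightarrow M$, together with cups and caps implementing the duality of $M$. The key structural input is that $M$ is a \emph{self-dual} bimodule: its evaluation is the conditional expectation $E\colon M\to N$ and its coevaluation is built from a Pimsner--Popa basis, with the normalization dictated by $[M:N]$. Assigning to each elementary tangle the corresponding intertwiner and composing along a tangle yields a multilinear map among the $P_{n,\pm}$, so that every planar tangle produces an operation, as required.

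The main obstacle is well-definedness, and this is where I expect the real work to lie. The map attached to a tangle must depend only on its isotopy class, and the operad gluing of tangles must match composition of intertwiners. This is precisely the coherence of the graphical calculus for a rigid (and, in the extremal case, spherical) $C^*$-tensor category. I would establish it by verifying that the elementary intertwiners obey the defining relations---the zig-zag (snake) identities for the duality maps, associativity and the Frobenius compatibility of $m$ with $E$, and the interchange law for horizontal versus vertical composition---and then invoking the standard theorem that such relations render the tangle map invariant under planar isotopy; compatibility with gluing is then immediate from the construction. The one relation resting on a hypothesis beyond finite index is sphericity (closing a string to the left agrees with closing it to the right), which is exactly extremality: the two closures compute the two normalized traces of an element of $M\cap N'$, and these coincide by the definition of extremal, yielding the optional axiom \ref{subfactor-spherical}.

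Finally I would verify the defining axioms. Finiteness of each $\dim P_{n,\pm}$ follows from the walk-counting description of these dimensions recorded among the facts above, since $\dim P_{n,\pm}$ equals the number of length-$2n$ loops based at $\star$ on $\Gamma$ (or $\Gamma'$), which is finite. The involution $*$ is the adjoint of intertwiners induced by the $*$-structure on $M$, manifestly compatible with orientation reversal. Condition \ref{subfactor-PA-evaluable}, $\dim P_{0,\pm}=1$, is the statement that $\Hom_{N-N}(N,N)\iso\Complex$ and $\Hom_{M-M}(M,M)\iso\Complex$, which holds because $N$ and $M$ are factors. Positivity of the form in \ref{subfactor-PA-positive} is the positivity of the inner product $\langle a,b\rangle=\tr(b^*a)$: under the dictionary above the indicated closed diagram is exactly this trace pairing evaluated in $\Complex$, and it is positive definite by property (iv) of a $\II_1$ factor. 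Assembling these gives the canonical subfactor planar algebra structure.
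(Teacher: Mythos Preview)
Your proposal is correct and follows essentially the same strategy as the paper's own (very terse) sketch: decompose an arbitrary tangle into elementary pieces, interpret those pieces as the standard categorical operations (tensor, composition, identity, evaluation/coevaluation), and invoke the coherence of the rigid $C^*$-tensor category to obtain well-definedness. Your version is considerably more fleshed out---the paper merely names the generating tangles and cites \cite{math.QA/9909027} for details, whereas you identify the evaluation with the conditional expectation, the coevaluation with a Pimsner--Popa basis, and go on to check the individual axioms (i)--(v) explicitly---but the architecture is the same.
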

\begin{proof}
We quickly sketch the proof, which follows the diagram calculus of \cite{MR776784, MR1113284, MR1091619}.  For a full proof, see \cite{math.QA/9909027}.  All planar tangles are generated by the tensor product tangles, compositions tangles, and single strands.  The actions of these special tangles can be interpreted in the usual way (tensoring, composing, and identity/evaluation/coevaluation maps), and the tensor category axioms guarantee that this action is well-defined.
\end{proof}

It turns out that the index of a subfactor is just the value of two nested circles, so it makes sense to talk about the index of a subfactor planar algebra.  In the extremal case, the planar algebra is spherical, so the index is just the square of the value of a single circle.  This scalar is called the loop value and is typically written $\delta$.

So we can finally make rigorous what kind of gadget the standard invariant is: it is a subfactor planar algebra.  Our goal is to classify subfactor planar algebras of index below $5$.  By Popa's reconstruction results, so long as these planar algebras are amenable, this will give us a classification of subfactors of the hyperfinite $\II_1$ of index below $5$.  It turns out that there's only one family of non-amenable planar algebras, so we end up with a nearly complete classification of subfactors of the hyperfinite $\II_1$.

Planar algebras afford the possibility of a ``generators and relations'' approach. Indeed
a special example of planar algebra is the theory of van Kampen diagrams \cite{MR1506963}. The idea is to
give abstract elements called ``$n$-boxes'' which are to be elements of $P_n$ for some
planar algebra and consider linear identities between planar tangles labelled
by  these $n$-boxes. One tries to find such identities which cause the quotient of
the free planar algebra on the $n$-boxes to collapse to a planar algebra with finite
dimensional $P_{n,\pm}$'s. This is the same in spirit as Conway's skein theory and indeed
one finds examples of planar algebras immediately from knot theory, by treating
the crossing as generators and the Reidemeister moves, and, say, the Conway 
skein relation \cite{MR1501429} for the Alexander polynomial \cite{MR0258014}, as planar identities.

\subsubsection{Algebra objects and module categories}
 \label{sec:module-categories}
There is another algebraic approach to the standard invariant coming out of the work of Longo, M\"uger, and Ostrik \cite{MR1257245, MR1444286, MR1966524, MR1976459}, which has been developed extensively in the work of Etingof, Gelaki, Nikshych, and Ostrik \cite{MR2119143, MR2097289, MR2183279,EGNO}, and is based on two closely related concepts: algebra objects and module categories.

An algebra object in a tensor category $\cC$ is an object $A$ endowed with the structure of an algebra, namely a unit morphism $1 \rightarrow A$ and a multiplication morphism $A \otimes A \rightarrow A$ satisfying the unit and associativity axioms.  A subfactor $N \subset M$ yields an algebra object $M$ in a tensor category of $N$-$N$ bimodules.  In the subfactor literature, $C^*$-algebra objects  in $C^*$ tensor categories are called $Q$-systems.  

Just as algebras have modules, algebra objects have module objects, which are objects $V$ together with an action $A \otimes V \rightarrow V$ satisfying associativity.  These modules can be tensored over algebras in the usual way.  Thus, an algebra object in a tensor category also produces a standard invariant by considering the $1$-$1$ bimodules (i.e objects in $\cC$), $1$-$A$ bimodules (i.e. right $A$-modules), $A$-$1$ bimodules (i.e. left $A$-modules), and $A$-$A$ bimodules in $\cC$.

The category of right $A$-modules has an additional structure: you can tensor on the left with objects in $\cC$ to get new right $A$-modules.  This makes the category of \emph{right} $A$-modules into a \emph{left} $\cC$-module category, that is a category $\cM$ together with a bifunctor $\cC \times \cM \rightarrow \cM$ endowed with associativity morphisms satisfying a coherence relation.  When $\cC$ is nice enough, any nice module category comes from an algebra object in this way using Ostrik's internal hom construction \cite{MR1976459}, namely if $X$ is any object in $\cM$ then $\underline{\operatorname{Hom}}(X, X)$ is an algebra object in $\cC$ whose category of modules recovers $\cM$.

Thus one can also think of the standard invariant as being a Morita equivalence of tensor categories $_{\cC}\cM_\cD$ where $\cC$ and $\cD$ are the even parts of the subfactor and $\cM$ is the odd part, together with a choice of simple object in $\cM$.  This point of view offers great flexibility in changing the choice of favorite object in $\cM$ and thereby makes certain constructions in subfactor theory easier to understand.

This theory has been most extensively worked out when $\cC$ is a fusion category, that is it is finite, semisimple, and the unit object is simple.  These correspond to finite depth finite index subfactors (finiteness is finite depth, semisimplicity follows from unitarity, and the unit object is simple corresponds to the von Neuman algebras being factors), but with the $C^*$ condition relaxed.

\subsection{A toolkit for planar algebras}
We now explain three of of the most important tools for analyzing planar algebras. The Temperley-Lieb planar algebra $TL(\delta)$ naturally maps to any planar algebra with loop value $\delta$. Every planar algebra has the structure of a module over the category of annular tangles, and we can decompose the planar algebra into irreducible modules. Finally, every subfactor planar algebra $P$ embeds into a certain combinatorially defined graph planar algebra $\cG(\Gamma(P))$, which only depends on the principal graph pair $\Gamma(P)$ of $P$ (and, in the infinite depth case, a choice of positive eigenvector for the adjacency matrix).

\subsubsection{Temperley-Lieb diagrams.}\label{sec:TL}
A planar tangle with no internal discs gives a map of the form $\mathbb{C} \to P_{n,\pm}$, which we can interpret (via the image of $1$) as giving an element in any planar algebra. 

Many interesting planar algebras have the property that a closed loop counts for a multiplicative
factor which we call $\delta$. (And indeed, if $P_{0,\pm}$ are 1-dimensional, the closed loops must be scalar multiples of the empty diagram.)
In this case we can restrict ourselves to connected tangles with no closed loops. These are called Temperley-Lieb tangles (though they should be called Kauffman diagrams after \cite{MR899057}).  If there are $2n$ boundary points there are $\frac{1}{n+1}\binom{2n}{n}$ Temperley-Lieb tangles.

The Temperley-Lieb tangles, with a specified loop value of $\delta$, themselves form a planar algebra $TL(\delta)$, and given any planar algebra $P$ with loop value $\delta$ there is a canonical map of planar algebras $TL(\delta) \to P$.  Note, however, that this map need not be injective; indeed below we will see that that for subfactor planar algebras it is injective exactly if $\delta \geq 2$. 

We can now sketch the proof of Theorem \ref{indexlessthanfour}.
If the planar algebra is a subfactor one, the index of the subfactor is  $\delta^2$ and
the sesquilinear form on the linear span $TL(\delta)_{n,\pm}$ of the Temperley-Lieb tangles
has to be positive semi-definite. As we observed above the sesquilinear form comes from a
trace on the *-algebra $TL_n$ whose multiplication is defined above. This algebra is best analysed
in terms of the Jones-Wenzl idempotent $f^{(n)}$ which is the unique (for $n>2$) idempotent whose scalar multiples
form a two-sided ideal. By induction the sesquilinear form is positive definite on the ideal complementary
to the Jones-Wenzl idempotent, so to check positive definiteness you need only check that trace of the Jones-Wenzl idempotent is positive. 
This was first calculated in \cite{MR0696688}. In \cite{MR873400} a
useful inductive formula was found for the Jones-Wenzl idempotents as linear combinations of Temperley-Lieb diagrams.

%%% \allchars
% A macro from http://www.tug.org/TUGboat/Articles/tb28-1/tb88glister.pdf for processing strings one character at time. 
% Calls `\doachar` on each character.
\makeatletter
\catcode`\^^G=12
\newcommand*{\allchars}[1]{%
\def\stuff{#1}\ifx\stuff\@empty\else
\@llchars#1^^G\fi}
\def\@llchars#1#2^^G{%
\def\letter{#1}\def\others{#2}%
\ifx\letter\@empty\let\next\@gobble
\else
\doachar{#1}%
\ifx\others\@empty \let\next\@gobble
\else \let\next\@llchars \fi
\fi
\next#2^^G}
\catcode`\^^G=15
\makeatother
%%%  finished with \allchars

\newcounter{xtop} %counter
\newcounter{xbottom} %counter

\newcommand{\doachar}[1]{
\ifx#1i
  \draw (\thextop,1) .. controls ($(\thextop,1)+(0,-0.5)$) and ($(\thexbottom,0)+(0,0.5)$) ..  (\thexbottom,0);
  \stepcounter{xtop};
  \stepcounter{xbottom};
\else\ifx#1u
  \draw (\thextop,1) arc (-180:0: 0.5 and 0.3);
  \stepcounter{xtop};
  \stepcounter{xtop};
\else\ifx#1n
  \draw (\thexbottom,0) arc (180:0: 0.5 and 0.3);
  \stepcounter{xbottom};
  \stepcounter{xbottom};
\fi\fi\fi
}

\newcommand{\TL}[1]{
\setcounter{xtop}{1}
\setcounter{xbottom}{1}
\begin{tikzpicture}[baseline=10,x=0.5cm]
% finally read the string
\allchars{#1}
\path[use as bounding box] (0.8,1.2) rectangle ($(\thexbottom, -0.2)+(-0.8,0)$);
\end{tikzpicture}
}

As examples, the first three Jones-Wenzl idempotents are given by:%
\begin{align*}
f^{(1)} & = \TL{i} &
f^{(2)} & = \TL{ii} - \frac{1}{[2]} \TL{un} 
\end{align*}%
\begin{align*}
f^{(3)} & = \TL{iii} - \frac{[2]}{[3]} \left( \TL{iun} + \TL{uni} \right) + \frac{1}{[3]} \left( \TL{uin} + \TL{niu} \right)
\end{align*}
where as usual $[n] = \frac{q^n - q^{-n}}{q - q^{-1}}$ with $\delta = q+q^{-1}$.

For index $\geq 4$ all the Jones-Wenzl idempotents have positive trace so the sesquilinear form is positive definite on Temperley-Lieb diagrams when $\delta \geq 2$. But when $\delta  = 2 \cos \pi/m$ it has a kernel, and this kernel is generated (as an ideal in the planar algebra) by none other than the $(m-1)$-th Jones-Wenzl idempotent. When $\delta < 2$ but not equal to $2 \cos\pi/m$, the form is nondegenerate but not positive definite.

The Temperley-Lieb planar algebra plays a key role in knot theory.  The Kauffman bracket can be thought of as a map of planar algebras from the planar algebra of tangles to Temperley-Lieb: $$%
%\beginpgfgraphicnamed{diagrams/tikz/#1-external}%
\begin{tikzpicture}[baseline]
\node (x) at (0,0){};
    \draw (x.45)-- (.5,.5);
    \draw (x.135) -- (-.5,.5);
    \draw (x.315) -- (.5,-.5);
    \draw (x.45) -- (-.5,-.5);
\end{tikzpicture}%
%\endpgfgraphicnamed
 \mapsto is %
%\beginpgfgraphicnamed{diagrams/tikz/#1-external}%
\begin{tikzpicture}[baseline]
    \draw (1.5,.5) .. controls (2,0) .. (1.5,-.5);
    \draw (2.5,.5) .. controls (2,0) .. (2.5,-.5);
\end{tikzpicture}%
%\endpgfgraphicnamed
 - i s^{-1} %
%\beginpgfgraphicnamed{diagrams/tikz/#1-external}%
\begin{tikzpicture}[baseline]
    \draw (3.5,.5) .. controls (4,0) .. (4.5,.5);
    \draw (3.5,-.5) .. controls (4,0) .. (4.5,-.5);
\end{tikzpicture}%
%\endpgfgraphicnamed
,$$
where $\delta = s^2 + s^{-2}$.
The value of this map on links recovers the Kauffman bracket of the link \cite{MR899057}. The Jones-Wenzl idempotents are key to the extension of the Jones polynomial \cite{MR0766964} in
$S^3$ to invariants of links in arbitrary 3-manifolds and hence $2+1$ dimensional topological quantum field theory (as in \cite{MR1191373,MR1362791} giving a diagrammatic version of Witten-Reshetikhin-Turaev theory \cite{MR990772, MR1091619}).

\subsubsection{Annular tangles.}
\newcommand{\Aff}{\operatorname{{\mathcal A}ff}}
\newcommand{\Ann}{\operatorname{{\mathcal A}nn}}
The Temperley-Lieb planar algebra captures all the information we can get out of planar tangles with no internal discs.  The next most complicated diagrams are those with \emph{one} internal disc and these are called annular tangles.  Annular tangles form a
category, using the glueing operation of the planar operad. The objects of the category are 
$\mathbb N$, being half the number of points where strings meet the boundary. Morphisms
are the tangles themselves, up to isotopy. 
 The special interest for us of this annular category is that by definition it will act on any planar
algebra so that a first coarse classification of a planar algebra will be its structure as a module over
the annular category.  This action is well understood by work of Graham and Lehrer \cite{MR1659204} adapted to the $C^*$ setting in \cite{MR1929335}.

It is natural to think of an element $R$ of a planar algebra as being a ``weight vector'' whose weight is
half the number of strings emanating from a disc containing $R$. Thus annular tangles can be thought of as
raising and lowering operators according to whether they increase or decrease the weight of their inputs.
With this in mind it is natural to look in a planar algebra for lowest weight vectors, i.e. ones annihilated 
by any lowering operator. The vector space of lowest weight vectors of a given weight is clearly invariant
under annular Temperley-Lieb tangles which preserve weight and these tangles could hardly be simpler, consisting
of powers of the rotation tangle. Decomposing lowest weight vectors according to this cyclic group action
we see that the lowest weight vectors may also be assumed to be eigenvectors for the rotation.

Thus we look in planar algebras for elements $R$ having the following three properties:
{
\tikzstyle{shaded}=[fill=red!10!blue!20!gray!40!white]
\tikzstyle{empty box}=[circle, draw, thick, fill=white, opaque, inner sep=2mm]
\tikzstyle{annular}=[scale=.6, inner sep=1mm, baseline]
\begin{equation*}
\begin{tikzpicture}[annular]
	\node at (-90:2.2cm) {$\star$};
	\clip (0,0) circle (2cm);
	\draw[shaded] (0,0)--(-68:4cm) arc (-68:-22:4cm) --(0,0); 
	\draw[shaded] (0,0)--(-112:4cm) arc (-112:-252:4cm) --(0,0); 
	\draw[shaded, fill=white] (0,0) .. controls ++(215:2cm) and ++(145:2cm) .. (0,0);
	\draw[shaded] (0,0)--(22:4cm) arc (22:68:4cm) --(0,0); 
	\draw[ultra thick] (0,0) circle (2cm);
	\node at (0,0)  [empty box] (T) {$R$};
	\node at (T.180) [right] {$\star$};
	\node at (0:1.5cm) {$\cdot$};
	\node at (12:1.5cm) {$\cdot$};
	\node at (-12:1.5cm) {$\cdot$};
\end{tikzpicture}
 = 0, \qquad 
\begin{tikzpicture}[annular]
	\node at (180:2.2cm) {$\star$};
	\clip (0,0) circle (2cm);
	\draw[shaded] (0,0) .. controls ++(170:2cm) and ++(100:2cm) .. (0,0);
	\draw[shaded] (0,0)--(-158:4cm) arc (-158:-112:4cm) --(0,0); 
	\draw[shaded] (0,0)--(-68:4cm) arc (-68:-22:4cm) --(0,0); 
	\draw[shaded] (0,0)--(22:4cm) arc (22:68:4cm) --(0,0); 
	\draw[ultra thick] (0,0) circle (2cm);
	\node at (0,0)  [empty box] (T) {$R$};
	\node at (T.180) [right] {$\star$};
	\node at (0:1.5cm) {$\cdot$};
	\node at (12:1.5cm) {$\cdot$};
	\node at (-12:1.5cm) {$\cdot$};
\end{tikzpicture}
 = 0, \qquad \text{and} \qquad
\begin{tikzpicture}[annular]
	\node at (-180:2.2cm)  {$\star$};
	\clip (0,0) circle (2cm);
	\foreach \rho in {0,90,180,270} {
		\draw[shaded,rotate=\rho] (0,0)--(158:0.8cm) .. controls (112:1.5cm) and (62:1.5cm) .. (68:2cm) arc (68:22:2cm) .. controls (22:1.5cm) and (68:1.5cm) .. (112:0.8cm) -- cycle;
	}
	\draw[ultra thick] (0,0) circle (2cm);
	\node at (0,0)  [empty box] (T) {$R$};
	\node at (T.180) [right] {$\star$};
	\node at (0:1.7cm) {$\cdot$};
	\node at (12:1.7cm) {$\cdot$};
	\node at (-12:1.7cm) {$\cdot$};
\end{tikzpicture}
 = \omega R
\end{equation*}
}
where $\omega$ is some root of unity.  (Combined, these imply that all caps are zero.)

Each such $R$ generates an irreducible annular submodule of the planar algebra linearly spanned by the ``annular consequences''
of $R$ which are diagrams where you add (possibly nested) cups around the outside of the generator, as in:
$$
\begin{tikzpicture}[baseline = 0cm]
	\clip (0,0) circle (1.2cm);
	\draw[unshaded] (0,0) circle (1.2cm);
	\draw[shaded] (90:1.2) circle (.3cm);
	\draw[shaded] (0,0)--(130:1.3cm) arc (130:160:1.3cm) --(0,0); 
	\draw[shaded] (0,0)--(190:1.3cm) arc (190:220:1.3cm) --(0,0); 
	\draw[shaded] (0,0)--(55:1.3cm) arc (55:-20:1.3cm) --(0,0); 
	\draw[shaded] (0,0)--(-50:1.3cm) arc (-50:-80:1.3cm) --(0,0); 
	\draw[thick, unshaded] (0,0) circle (.4cm);
	\draw[unshaded] (20:1.2) circle (.3cm);
	\node at (90:.52) {$\star$};
	\node at (117:1.05) {$\star$};
	\node at (-.5,-.8) {$\cdot$};
	\node at (-.3,-.85) {$\cdot$};
	\node at (-.1,-.9) {$\cdot$};
	\node at (0,0) {$R$};
	\draw[ultra thick] (0,0) circle (1.2cm);
\end{tikzpicture}\;.
$$

It is easy to see that, if $R_1$ and $R_2$ are orthogonal lowest weight vectors then the entire annular submodules built on
$R_1$ and $R_2$ are orthogonal so we get a nice orthogonal decomposition of a planar algebra as a direct sum of annular
submodules, unique up to a choice of basis of rotational eigenvectors in the lowest weight spaces.

In fact, we have a complete classification of the possible irreducible representations along these lines.

\begin{theorem}{\cite{MR1659204,MR1929335}}
 Suppose $\delta >2$.
The irreducible unitary representations of annular Temperley-Lieb
up to unitary equivalence split into two classes depending on whether the lowest weight $n$ is 0 or at least 1. 
When $n \geq 1$, the irreps are characterized by $\omega$, a unit complex number with $\omega^n=1$, giving the eigenvalue of the rotation operator on the one-dimensional lowest weight space.

\end{theorem}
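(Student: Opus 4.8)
The plan is to classify the irreducible unitary modules over the annular Temperley-Lieb category directly, by isolating a canonical generating vector in each module and then computing the inner products of its annular consequences. Throughout I would work with the three basic families of annular tangles: the \emph{lowering} tangles (those containing a cap, which decrease weight), the \emph{raising} tangles (those containing a cup, which increase weight), and the weight-preserving rotation $\rho$. The first step is to establish a lowest weight. Given an irreducible unitary module $V = \bigoplus_k V_k$ graded by weight, let $n$ be the smallest index with $V_n \neq 0$ and call $V_n$ the lowest weight space. By minimality every lowering tangle annihilates $V_n$, so each nonzero vector of $V_n$ is a lowest weight vector in the sense of the three displayed conditions preceding the statement (all caps act as zero). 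The only weight-preserving annular tangles that do not factor through a cap are the powers of $\rho$, so $V_n$ is a module over the cyclic group $\langle \rho \rangle$.

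Next I would pin down the lowest weight space. The annular relations, obtained by isotoping a diagram around the annulus, give $\rho^n = \mathrm{id}$ on lowest weight vectors; hence $\rho$ is a finite-order unitary on $V_n$, diagonalizable with eigenvalues among the $n$-th roots of unity. As noted in the discussion above, two \emph{orthogonal} lowest weight vectors generate orthogonal annular submodules. Since $V$ is irreducible it is generated by any single nonzero lowest weight vector, and this orthogonality forces $\dim V_n = 1$. Thus $V_n = \Complex R$ for a single rotational eigenvector $R$, with $\rho R = \omega R$ and $\omega^n = 1$ when $n \geq 1$. This already shows that an irreducible module with $n \geq 1$ determines the pair $(n,\omega)$.

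Finally I would prove that $(n,\omega)$ together with $\delta$ determines the module up to unitary equivalence, and that every admissible pair is realized when $\delta > 2$. Since $V$ is irreducible and generated by $R$, it is spanned by the annular consequences of $R$, namely the diagrams obtained by adding (possibly nested) cups around $R$. Using the Temperley-Lieb relations, the $*$-structure, and the eigenvalue equation $\rho R = \omega R$, every inner product of two such consequences reduces to a closed diagram and hence to an explicit expression in $\delta$ and $\omega$. Consequently the Gram matrix of the spanning set is a universal matrix $G = G(n,\omega,\delta)$: two irreducibles with the same $(n,\omega)$ have isometric completions, which gives uniqueness. For existence one takes the formal span of annular consequences, equips it with the sesquilinear form $G$, and quotients by the radical; this yields an honest unitary irrep precisely when $G$ is positive semidefinite.

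The crux of the argument, and the step where $\delta > 2$ is essential, is the evaluation of this Gram matrix and the proof that it is (strictly) positive definite for every $\delta > 2$ and every admissible $\omega$. The entries are Chebyshev-type polynomials in $\delta$ twisted by powers of $\omega$, and positivity is exactly what degenerates, or collapses into root-of-unity conditions, as $\delta \downarrow 2$; this is the computation carried out by Graham and Lehrer and adapted to the unitary setting in the cited work. The weight-zero case is genuinely different: there is no rotational constraint to impose on a weight-zero lowest weight vector, so those representations form the separate class mentioned in the statement and must be analysed by a direct study of the weight-zero vectors rather than by the lowest-weight-eigenvector method above.
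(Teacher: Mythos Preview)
The paper does not actually give a proof of this theorem; it is stated as a citation to Graham--Lehrer and Jones, with only the surrounding paragraphs sketching the set-up (lowest weight vectors, the rotation action, and the orthogonality of annular submodules generated by orthogonal lowest weight vectors). Your outline is precisely that standard approach and matches the paper's informal discussion, so there is nothing to compare in the sense of a genuinely different route.

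One small clarification: the identity $\rho^n=\mathrm{id}$ is not special to lowest weight vectors---it holds as an equality of annular tangles up to isotopy on any weight-$n$ space, since rotating a disc with $2n$ marked points by $n$ two-click steps returns the starred point to its original position. Otherwise your sketch is accurate, including the honest acknowledgement that the positivity of the Gram matrix for $\delta>2$ is where the real work lies and is deferred to the cited references; the paper does exactly the same, remarking afterwards only that in index $>4$ the annular consequences are linearly independent.
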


For $n=0$ there is also a characterization of the irreps, but it is slightly more complicated.  For a subfactor planar algebra, there's exactly one $n=0$ annular subrepresentation, which is the Temperley-Lieb subalgebra.  This is the only $n=0$ irrep which we'll ever meet.

This analysis would be of limited use if the various annular consequences of a lowest weight vector had complicated linear
relations among themselves. Fortunately in index $>4$ the annular consequences are all linearly independent as was proved
in \cite{MR1929335,MR1659204}. Thus there is an easy combinatorial argument showing that in an irrep generated by a weight $n>0$ lowest weight vector the dimension of the $n+k$-box space is
$\binom{2n+2k}{k}$. (Orient each cup so the lowest weight vector appears to its right; now we need to choose $k$ out of the $2n+2k$ boundary points to be the beginnings of cups.)

This annular decomposition of a planar algebra reveals a totally different structure from what one would obtain
by thinking in terms of algebras, $\operatorname{hom}$-spaces and the like. The dimensions of the lowest weight spaces and
the multiplicities of the rotational eigenvectors are invariants of a planar algebra largely invisible in that picture.
In fact the dimensions of the lowest weight spaces can be determined directly from the principal graph, using the formulas above for the dimensions of the irreps. First, $\dim P_{n,\pm} = \omega_{n}$, the number of loops of length $2n$ on the principal graph $\Gamma$ (both principal graphs give the same counts) based at the starred vertex. Then $a_n$, the multiplicity of annular modules with lowest weight $n$, is given by
$$a_n = \sum_{r=0}^n (-1)^{r-n} \frac{2n}{n+r} \binom{n+r}{n-r} \omega_r.$$
We call this sequence of dimensions the ``annular multiplicities''  of the planar algebra.

The planar algebras of subfactors of small index are often planarly generated by a single (lowest) lowest weight vector
that is thus a rotational eigenvalue. This has been a cornerstone of our analysis, and the interaction between the
annular picture and the algebra one has been rich in consequence.  It is worth noting that rotation also plays a key role in the study of higher Frobenius-Schur indicators of pivotal categories (which are traces of rotation operators) in the work of Ng and Schauenberg \cite{MR2213320, MR2381536, MR2313527}.

Next in complexity after the annular tangles are the quadratic tangles, with two input discs. These do not form a category, and it is much harder to extract useful information from them. Nevertheless there is a particularly important quadratic tangle $S \circ T$ which connects two $n$-boxes by $n-1$ strings, producing an $n+1$-box:
$$
S \circ T =
\begin{tikzpicture}[baseline=6]
\draw (-0.25, 1.5) -- (-0.25, -0.75);
\draw (-0.08, 1.5) -- (-0.08, -0.75);
\draw (0.08, 1.5) -- (0.08, -0.75);
\draw (0.25, 1.5) -- (0.25, 0.7) arc (180:360:0.15) -- +(0,0.8);
\draw (0.25, -0.75) -- (0.25, 0.05) arc (180:0:0.15) -- +(0,-0.8);
\node[draw, thick, circle, fill=white] (S) at (0,1) {$S$};
\node[left=-2pt] at (S.180) {$\star$};
\node[draw, thick, circle,  fill=white] (T) at (0,-0.25) {$T$};
\node[left=-2pt] at (T.180) {$\star$};
\end{tikzpicture}\;.
$$

Moreover it is possible, through a careful analysis of the dual basis for annular consequences, to compute explicit formulas for the projection of $S \circ T$ to the span of annular consequences of $n$-boxes. 
In some cases, by dimension counting arguments based on annular multiplicities, it is possible to show that $S \circ T$ must actually lie in annular consequences of $n$-boxes, in which case these formulas produce extremely strong constraints on the planar algebra. Further, one can compute the inner products between $S \circ T$  and some
of its rotations which tightens the constraints and allows larger annular multiplicities to be considered. This approach has been developed in \cite{math/1007.1158} and \cite{1208.3637} and plays a key role in several of the constructions of small index subfactors.

\subsubsection{Graph planar algebras}
\label{sec:GPA}
In this section we describe a plethora of planar algebras introduced in \cite{MR1865703} which are constructed from graphs and are close relatives of the path algebras from \cite{MR996454, MR1642584}.  These are not subfactor planar algebras, because the dimension of $P_0$ is the number of vertices of the graph instead of $1$, but nonetheless they are very useful because any planar subalgebra with a $1$-dimensional $0$-box space will be a subfactor planar algebra.  Furthermore, graph planar algebras give a way to describe connections, biunitarity, and flatness in planar algebraic language.

Given a bipartite graph $\Gamma$ with marked point $\star$ (e.g. one of the two principal graphs of a subfactor), we can define the `graph planar algebra' $\cG(\Gamma)$. The spaces $\cG(\Gamma)_{n,\pm}$ are given by functionals on based loops of length $2n$, with the base point either at an even depth or odd depth vertex depending on whether the sign $\pm$ is $+$ or $-$.

Alternatively, given a 4-partite graph ($\Gamma$, $\Gamma'$) as described in \S \ref{sec:cell-calculus}, we can construct another `two-sided' graph planar algebra $\cG(\Gamma, \Gamma')$. This is a more general type of planar algebra, with four shadings instead of two, and with spaces indexed by loops on a square. The space $\cG(\Gamma,\Gamma')_\kappa$ is the vector space of functionals on based loops on the 4-partite graph that descend to the specified loop $\kappa$. The graph planar algebra $\cG(\Gamma)$ described in the first paragraph above can be recovered as a subalgebra by only considering loops on the square that remain on a single edge.

We won't describe the action of planar tangles in detail here, but only say that it depends on the Frobenius-Perron eigenvector of the graph. Details can be found in, e.g.,  \cite{MR1865703} and \cite[\S 1]{1205.2742}.

\begin{theorem}[\cite{MR2812459, gpa}]
There is a canonical embedding $\cP \to \cG(\Gamma(\cP))$ for any subfactor planar algebra $\cP$.
\end{theorem}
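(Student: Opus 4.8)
The plan is to construct the embedding from the bimodule model of the standard invariant and then verify faithfulness using positivity. First I would realize $\cP$ as the planar algebra of the $N$-$N$ bimodules generated by the basic bimodule $X = {}_N M_M$, so that $\cP_{n,+}$ is identified with the endomorphism algebra of the $n$-fold relative tensor power of $M$ over $N$, decomposed into irreducible $N$-$N$ bimodules indexed by the even vertices of $\Gamma$. For each edge of $\Gamma$ I would fix once and for all an orthonormal basis of the one-step intertwiner space it represents (an inclusion of one irreducible bimodule into the tensor of another with $X$ or $\bar X$); composing these along an edge-path turns each path in $\Gamma$ into an explicit isometric intertwiner, and for a fixed endpoint the paths give an orthonormal basis of the corresponding hom space. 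A based loop of length $2n$ is then read as such a string of edges, hence as a morphism built from the chosen intertwiners, and I define $\Phi(x)$ to be the functional sending each loop to the scalar obtained by contracting $x$ against that morphism. For loops based at $\star$ this is exactly a matrix coefficient of $x$ in the path basis, while loops based at the other even vertices record how $x$ interacts with the rest of the tensor category.

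Next I would check that $\Phi$ is a homomorphism of planar algebras by testing it on a generating set of tangles — multiplication, tensor product, the cup and cap (co)evaluations, and the rotation — since, as in the sketch that $H_0(\otimes^n_N M)$ carries a planar algebra structure, these generate the planar operad. On the graph-planar-algebra side each of these tangles acts by an explicit sum over loops weighted by ratios of Frobenius--Perron eigenvector entries, and the crucial point is that those entries are precisely the (square-root) bimodule dimensions recorded by the eigenvector Fact above. Consequently the combinatorial loop formulas reproduce term-by-term the categorical computation of composing and tensoring the chosen intertwiners and inserting duality maps: associativity of $\otimes$, Frobenius reciprocity, and the zig-zag duality axioms supply the needed identities, while orthonormality of the edge-bases makes the cup/cap normalizations agree.

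To see the map is injective, I would verify that $\Phi$ is a $\ast$-homomorphism which is isometric for the natural inner products. The graph planar algebra carries a positive sesquilinear form given by the Frobenius--Perron-weighted sum of the pointwise product of loop functionals, and the tangle-compatibility just established forces $\langle \Phi(a), \Phi(b)\rangle_{\cG(\Gamma)} = \langle a, b\rangle_\cP$ for the form of condition \ref{subfactor-PA-positive}. Since that form is positive-definite on a subfactor planar algebra, $\Phi$ is isometric and hence injective on each $\cP_{n,\pm}$; this is the one place where the subfactor axioms, rather than pure categorical formalism, enter.

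The main obstacle I expect is the cup/cap and rotation compatibility: matching the duality and rotational normalization constants forces one to choose the edge-bases compatibly with the bimodule duality (equivalently, to track the rotation of the underlying connection) and to account for non-sphericality through the Frobenius--Perron weights. Note also that the construction depends on the gauge choice of edge-bases, so ``canonical'' should be read as: the image is well-defined, and any two such embeddings differ by a gauge automorphism of $\cG(\Gamma)$. Finally, when $\Gamma$ is finite all sums above are finite and the argument is complete; the infinite-depth case additionally requires fixing a positive eigenvector for the adjacency matrix and controlling the convergence and normalization of the loop sums, which is the extra content handled in \cite{gpa}.
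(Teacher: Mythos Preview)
The paper does not give its own proof of this theorem; it is stated as a cited result from \cite{MR2812459} (Jones--Penneys) and \cite{gpa}. Your sketch is essentially the approach taken in those references: realize the planar algebra via the bimodule category, fix orthonormal bases for the one-step intertwiner spaces to obtain a path basis, define the embedding by taking matrix coefficients in that basis, check compatibility on a generating set of tangles, and deduce injectivity from positive-definiteness of the subfactor inner product. Your caveats about the gauge dependence of the edge-bases, the care needed with cup/cap normalizations via the Frobenius--Perron weights, and the extra work in the infinite-depth case are exactly the genuine technical issues that the cited proofs address, so your outline is both correct and faithful to the literature.
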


We can interpret a \emph{connection} as an element $K \in \cG(\Gamma,\Gamma')_{\square}$. The condition that a connection is biunitary becomes the two planar equations
\tikzstyle{conn}=[circle, draw, thick, fill=white, opaque, inner sep = .7mm]
\tikzstyle{gauge}=[circle, draw, thick, fill=white, opaque, inner sep = .7mm]
\tikzstyle{invgauge}=[circle, draw, thick, fill=black!70!white, inner sep = .7mm]
$$
\begin{tikzpicture}[scale=.7, baseline=0]
	\clip (-1,-1.8) rectangle (1,1.8);

	\fill[fill=red-unshaded] (-1,-2)--(-.3,-2)--(-.3,2)--(-1,2);
	\fill[fill=red-shaded] (-.3,-2)--(-.3,-1)--(.3,-1)--(.3,-2);
	\fill[fill=red-shaded] (-.3,2)--(-.3,1)--(.3,1)--(.3,2);
	\fill[blue-unshaded] (1,-2)--(.3,-2)--(.3,3)--(1,2);
	\fill[blue-shaded] (-.3,-1)--(-.3,1)--(.3,1)--(.3,-1);
	
	\draw (-.3,-2)--(-.3,2);
	\draw (.3,-2)--(.3,2);
	
	\node[conn, minimum size=8mm] at (0,1) {$K$};
	\node[conn,  minimum size=8mm] at (0,-1) {$K^*$};
	
\end{tikzpicture}
=
\begin{tikzpicture}[scale=.7, baseline=0]
	\clip (-1,-1.8) rectangle (1,1.8);

	\fill[fill=red-unshaded] (-1,-2)--(-.3,-2)--(-.3,2)--(-1,2);
	\fill[fill=red-shaded] (-.3,-2)--(-.3,2)--(.3,2)--(.3,-2);
	\fill[blue-unshaded] (1,-2)--(.3,-2)--(.3,3)--(1,2);
	
	\draw (-.3,-2)--(-.3,2);
	\draw (.3,-2)--(.3,2);
\end{tikzpicture}
\quad
\text{and}
\quad
\begin{tikzpicture}[scale=.7,baseline=0]
	\clip (-1.9,-1) rectangle (1.9,1);

	\fill[red-shaded] (-2,1) rectangle (2,0);
	\fill[blue-shaded] (-2,-1) rectangle (2,0);

	\filldraw[fill=blue-unshaded] (-1,.3) .. controls (0,.6) .. (1,.3)--(1,-.3) .. controls (0,-.6) .. (-1,-.3) -- (-1,.3);
	\filldraw[fill=red-unshaded] (-2,.6) -- (-1,.3) -- (-1,-.3) -- (-2,-.6);
	\filldraw[fill=red-unshaded] (2,.6) -- (1,.3) -- (1,-.3) -- (2,-.6);
	
	\node[conn, minimum size=8mm] at (-1,0) {$K$};
	\node[conn, minimum size=8mm] at (1,0) {$K^*$};

\end{tikzpicture}
=
\begin{tikzpicture}[scale=.7, baseline=0]
	\clip (-1.9,-1) rectangle (1.9,1);
	
	\fill[red-shaded] (-2,1) rectangle (2,0);
	\fill[blue-shaded] (-2,-1) rectangle (2,0);

	\filldraw[fill=red-unshaded] (-2,-.6) .. controls (0,0) .. (2,-.6)--(2,.6) .. controls (0,0) .. (-2,.6);
\end{tikzpicture} \; .
$$

Any biunitary connection has a corresponding shaded planar algebra of \emph{flat elements}. An element $x \in \cG(\Gamma)$ is flat if there is a corresponding $y \in \cG(\Gamma')$ such that 
\newcommand{\ybelow}{\begin{tikzpicture}[scale=.8, baseline]
	\clip (-.9,-2.9) rectangle (3.9,1.9);
	\filldraw[fill=red-unshaded] (0,2) --(0,0).. controls (0,-.8) .. (1,-1) -- (1,2)--(0,2);
	\filldraw[fill=red-shaded] (1,2)--(1,-1) .. controls (2,-1.2) and (1,-1.8) .. (2,-2) -- (2,2)--(1,2);
	\filldraw[fill=red-unshaded] (4,2)--(2,2)--(2,-2) .. controls (3,-2.2) .. (3,-3)--(4,-3)--(4,2);
	\filldraw[fill=blue-shaded] (3,-3)--(3,-3) .. controls (3,-2.2) .. (2,-2) -- (2,-3)--(3,-3);
	\filldraw[fill=blue-unshaded] (2,-2)--(2,-3)--(1,-3)--(1,-1) .. controls (2,-1.2) and (1,-1.8) .. (2,-2);
	\filldraw[fill=blue-shaded] (0,2)-- (0,0).. controls (0,-.8) .. (1,-1) -- (1,-3) -- (-1,-3)--(-1,2)--(0,2);
	\node[conn, minimum size=.7cm, shape=circle, fill=white, draw]  at (1,-1) {\tiny$K$};
	\node[conn, minimum size=.7cm, shape=circle, fill=white, draw]  at (2,-2) {\tiny$K^*$};
	\node[conn, minimum size=1.2cm, shape=rectangle, rounded corners = 4mm, fill=white, draw]  at (1.5,.5) {$y$};
\end{tikzpicture}
}
\newcommand{\xabove}{\begin{tikzpicture}[scale=.8, baseline]
	\clip (-.9,-2.9) rectangle (3.9,1.9);
	\filldraw[fill=red-unshaded] (0,2) .. controls (0,1.2) .. (1,1) -- (1,2)--(0,2);
	\filldraw[fill=red-shaded] (1,2)--(1,1) .. controls (2,.8) and (1,.2) .. (2,0) -- (2,2)--(1,2);
	\filldraw[fill=red-unshaded] (4,2)--(2,2)--(2,0) .. controls (3,-.2) .. (3,-1) -- (3,-3)--(4,-3)--(4,2);
	\filldraw[fill=blue-shaded] (3,-3)--(3,-1) .. controls (3,-.2) .. (2,0) -- (2,-3)--(3,-3);
	\filldraw[fill=blue-unshaded] (2,0)--(2,-3)--(1,-3)--(1,1) .. controls (2,.8) and (1,.2) .. (2,0);
	\filldraw[fill=blue-shaded] (0,2) .. controls (0,1.2) .. (1,1) -- (1,-3) -- (-1,-3)--(-1,2)--(0,2);
	\node[conn, minimum size=.7cm, shape=circle, fill=white, draw]  at (1,1) {\tiny$K$};
	\node[conn, minimum size=.7cm, shape=circle, fill=white, draw]  at (2,0) {\tiny$K^*$};
	\node[conn, minimum size=1.2cm, shape=rectangle, rounded corners = 4mm, fill=white, draw]  at (1.5,-1.5) {$x$};
\end{tikzpicture}
}
\begin{equation*}
\xabove = \ybelow.
\end{equation*}

The 0-box space of the flat subalgebra consists of those elements on which the double ring operator acts by $\delta^2$. Since the Frobenius-Perron eigenvalue of the graph is always multiplicity free, we see the 0-box space is 1-dimensional, and hence the flat subalgebra is always evaluable.

One says that the biunitary connection itself is flat exactly if the principal graph pair of the flat subalgebra is the same as the graph pair $(\Gamma, \Gamma')$ that we started with \cite{MR996454,MR1308617}.

\section{Classification in index \texorpdfstring{$\leq 4$}{<4}}
\label{sec:index-leq-4}

\subsection{Subfactors of index less than 4}
\label{sec:index-lt-4}
The principal graph of a subfactor of index less than $4$ is a graph of index less than $2$.  It is well known that the only bipartite graphs of norms less than $2$ are the ADE Dynkin diagrams.  Furthermore, it is not difficult to see by dimension considerations that the starred vertex must be the vertex furthest from any branch vertices (otherwise one of the bimodules would have dimension less than $1$).  Thus in order to classify subfactor planar algebras of index strictly less than $4$ we only need to classify all subfactor planar algebras whose principal graph is a fixed Dynkin diagram.  Since these graphs are all finite, this also gives a complete classification of subfactors of the hyperfinite $\II_1$ via Popa's reconstruction theorem.

\begin{theorem}
The number of subfactor planar algebras realizing each of the $ADE$ Dynkin diagrams is given by the following table:

\begin{center}\begin{tabular}{c|c|c|c|c|c|c}
Principal graph & $A_n$ & $D_{2n+1}$  & $D_{2n}$ & $E_6$ & $E_7$ & $E_8$ \\ \hline
Realizations & $1$ & $0$ & $1$ & $2$ & $0$ & $2$
\end{tabular} \end{center}
\end{theorem}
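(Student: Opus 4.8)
The plan is to reformulate the classification in terms of flat biunitary connections and then dispatch each Dynkin diagram by a finite computation. First I would use Fact~\ref{fact:index-and-graph-norm} together with the classification of bipartite graphs of norm $<2$ to see that the principal graph $\Gamma$ of an index-$<4$ subfactor is one of the listed $ADE$ diagrams, and that (as already noted) the star must sit at the end farthest from any branch vertex. Since for index $<4$ the dual principal graph agrees with the principal graph, a subfactor planar algebra with principal graph $\Gamma$ is, via Ocneanu's cell calculus and the embedding $\cP \hookrightarrow \cG(\Gamma(\cP))$, exactly the data of a flat biunitary connection on $\Gamma$, taken up to gauge equivalence: the flat elements always form an evaluable planar algebra, and the connection is flat precisely when the principal graph of that flat subalgebra is again $\Gamma$. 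Thus the theorem reduces to counting gauge-equivalence classes of flat biunitary connections on each diagram.

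Next I would solve biunitarity. The biunitarity equations say that the connection, read between opposite corners of the $4$-partite graph, assembles into unitary matrices; on an $ADE$ diagram every vertex has degree $\le 3$, so these are $1\times 1$, $2\times 2$, and at the branch vertex $3\times 3$ blocks whose entries are pinned down by the Frobenius-Perron weights up to phases. Quotienting by the gauge group (rescaling each edge by a phase) leaves, for each diagram, a short explicit list of biunitary connections: for the linear diagrams $A_n$ a unique one, and for the branched diagrams finitely many, indexed by a discrete parameter (a sign, or a root of unity) coming from the branch vertex.

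The crux is flatness. For $A_n$ the flat subalgebra is just Temperley--Lieb $TL(\delta)$, whose principal graph is $A_n$, so the unique connection is automatically flat and contributes $1$. For the remaining diagrams I would compute the flat part of the graph planar algebra directly, equivalently compute Ocneanu's parallel-transport obstruction: this shows that $D_{2n}$, $E_6$, and $E_8$ carry flat connections, while on $D_{2n+1}$ and $E_7$ \emph{no} gauge choice produces a flat connection, so these contribute $0$. The count of $2$ for $E_6$ and $E_8$ arises because the surviving biunitary connection there has a genuinely complex entry whose complex conjugate yields a second, gauge-inequivalent flat connection, whereas for $D_{2n}$ the flat connection is unique up to gauge.

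The hardest part is exactly this flatness analysis. Proving non-flatness for $D_{2n+1}$ and $E_7$ requires exhibiting a concrete inconsistency in the parallel transport around the branch of the diagram, showing that no phase renormalization can repair it; and getting the count right for $E_6$ and $E_8$ means tracking the gauge action carefully enough to neither merge the two complex-conjugate connections nor spuriously split a single gauge class. An alternative route to the same bookkeeping is to identify the even part with a module category over the quantum-$SU(2)$ fusion category and invoke the $ADE$ classification of such module categories, but the flatness obstruction reappears there as the statement that $D_{2n+1}$ and $E_7$ simply do not occur.
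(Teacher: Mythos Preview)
Your proposal is correct and is precisely the flat-connection approach of Ocneanu and Kawahigashi that the paper cites as its proof; the paper gives no self-contained argument, only the references, so in that sense you are doing what the paper does.

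Two remarks are worth making. First, the paper's surrounding discussion highlights cheaper obstructions for the non-existence cases that bypass the flatness computation you single out as the hardest step: Izumi showed $D_{2n+1}$ and $E_7$ admit no consistent fusion ring at all, and for $E_7$ alone the paper notes a one-line dimension argument (one vertex would have Frobenius--Perron dimension strictly between $1$ and $2$ but not of the form $2\cos(\pi/m)$, which is impossible). These do not replace your existence and uniqueness analysis for $A_n$, $D_{2n}$, $E_6$, $E_8$, but they make the zeros in the table essentially free. Second, you take ``the dual principal graph equals the principal graph for index $<4$'' as an input, whereas the paper calls this ``an accident of small index'', i.e.\ a consequence of the classification. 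If you want it as an input you need one extra line: both graphs have norm $<2$, hence are $ADE$, and they share the same based-loop counts at every length (Fact in the paper), which among $ADE$ diagrams pins down the graph uniquely.
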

\begin{proof}
See \cite{MR996454} for an outline, and \cite{MR999799, MR1193933, MR1145672, MR1313457, MR1308617} for more details. 
\end{proof}

Thus, below index 4, there are two infinite families of subfactors, $A_n$ and $D_{2n}$, and four sporadic examples. For all of these subfactors, both principal graphs are the same, and all even bimodules are self-dual except for the two vertices at the end of $D_{4n}$ which are dual to each other. The two versions of the sporadic cases, $E_6$ and $E_8$, are complex conjugate to each other (depending on a choice of a third or a fifth root of unity). 

It is a curious feature of the classification that there are no $D_m$ subfactors with $m$ odd, nor any $E_7$ subfactor.
It is easy to see that some of the $D_m$ cannot exist; the two vertices at the end of $D_{2^k+1}$ have dimensions which are not algebraic integers (dimensions of bimodules in finite depth subfactors must be algebraic integers because they are eigenvalues of integer fusion matrices). In the general case, Izumi showed \cite{MR1145672} that there are no consistent fusion rules for any of the $D_m$ with $m$ odd, or for $E_7$.  (We can easily rule out $E_7$, by noting that there would need to be a bimodule with dimension less 2 but not of the form $2 \cos(\pi/n)$.)

The $A_n$ are the easiest planar algebras to understand; they're just the image of Temperley-Lieb (the kernel of the map consists essentially of the $n$-th Jones-Wenzl idempotent).  The $D_{2n}$ can be quickly constructed from Temperley-Lieb using an orbifold construction \cite{MR1308617}.  Namely there is a non-trivial bimodule of dimension $1$, and you can take a ``quotient'' which identifies this bimodule with the trivial. The exceptional cases are more difficult to describe; the first constructions were in \cite{MR1193933} for $E_6$ and \cite{MR1313457} for $E_8$.  Another construction is via conformal inclusions: $E_6$ can be built starting from $SU(2)_{10} \subset \operatorname{Spin}(5)$ and $E_8$ comes from $SU(2)_{28} \subset G_2$ \cite{MR1617550}.

The planar algebras for the ADE subfactors are constructed in \cite{MR1929335}.  A purely generators-and-relations description for the $D_{2n}$ planar algebras is given in \cite{MR2559686}, and Bigelow \cite{MR2577673} gave generators and relations for $E_6$ and $E_8$, and gave explicit bases for all box spaces for all subfactors of index less than $4$.

\subsection{Subfactors of index exactly 4}
\label{sec:index-4}

The classification of subfactor planar algebras with index exactly 4 is outlined in \cite[p. 231]{MR1278111}, with parts of the proof in \cite{MR999799, MR1213139,MR1308617,MR1340721,MR1054961}.  On the one hand, this classification is simpler because all the examples are related to ordinary groups.  On the other hand, it is more technically delicate because there are infinite graphs with norm $2$.

At index exactly 4,  the possible principal graphs are all affine simply-laced Dynkin diagrams.  Furthermore, it is not difficult to work out for each graph the possible location for the vertex $\star$, up to symmetry (see Figure \ref{fig:affine}).   So we need only determine how many realizations there are for each of these starred affine Dynkin diagram.  Note that the $A_{2n}^{(1)}$ are not bipartite and so cannot come from subfactors.

\begin{theorem}
The number of subfactor planar algebras realizing each of the starred affine Dynkin diagrams is given by the following table:

\begin{center}\begin{tabular}{c|c|c|c|c|c|c|c|c}
Principal graph & $A_{2n}^{(1)}$ & $A_{2n-1}^{(1)}$ & $D_n^{(1)}$ & $E_6^{(1)}$ & $E_7^{(1)}$ & $E_8^{(1)}$ & $A_\infty$ & $D_\infty$ \\ \hline
Realizations & $0$ & $n$ & $n-2$ & $1$ & $1$ & $1$ & $1$ & $1$
\end{tabular} \end{center}
\end{theorem}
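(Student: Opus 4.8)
The plan is to combine the rigidity of norm-$2$ graphs with the graph planar algebra embedding and Popa's reconstruction theorem, thereby reducing the whole statement to a gauge-equivalence count of flat biunitary connections on each candidate graph. First I would pin down which graphs can occur. By Fact \ref{fact:index-and-graph-norm}, a finite-depth index-$4$ subfactor has $\|\Gamma\|=2$, and in the infinite-depth case the graph still has norm at most $2$; since at index exactly $4$ every subfactor is amenable (the index equals $\|\Gamma\|^2 = 4$), the norm is forced to be exactly $2$. The connected graphs of norm exactly $2$ are classically known to be the finite affine simply-laced diagrams together with the infinite diagrams $A_\infty$, $D_\infty$ (and the two-sided line $A_{\infty,\infty}$); one proves this by producing the eigenvalue-$2$ Perron eigenvector (the vector of Kac marks in the affine case) and observing that any connected norm-at-most-$2$ graph embeds in one of these. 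Principal graphs are bipartite, which immediately discards the odd cycles, giving the claimed $0$ realizations for $A_{2n}^{(1)}$.

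Next I would locate $\star$. The dimension vector is the positive eigenvector for eigenvalue $\sqrt{[M:N]}=2$, i.e.\ the vector of marks, and since the distinguished bimodule satisfies $\dim_N N = 1$, the star must sit at a vertex of mark $1$. Reading off the mark-$1$ vertices of each affine diagram and collapsing those identified by a graph automorphism produces exactly the finite list of starred graphs in Figure \ref{fig:affine} that remains to be counted (a single position for each of $A_{2n-1}^{(1)}$, $E_6^{(1)}$, $E_7^{(1)}$, $E_8^{(1)}$, and the fork-tip positions for $D_n^{(1)}$).

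With the graphs and stars fixed, I would translate ``subfactor planar algebra with principal graph $\Gamma$'' into ``flat biunitary connection on the $4$-partite graph of $(\Gamma,\Gamma')$, up to gauge equivalence.'' This is precisely what the graph planar algebra embedding buys us: such a planar algebra is a planar subalgebra of $\cG(\Gamma)$ with one-dimensional $0$-box space, equivalently the flat subalgebra of a flat biunitary connection, and because index-$4$ standard invariants are amenable, Popa's reconstruction theorem ensures this combinatorial count agrees with the count of hyperfinite subfactors. For existence I would produce the connections using the McKay correspondence: the even part is a fusion category generated by a dimension-$2$ object realizing the given McKay graph, hence at $\delta=2$ (the classical point $q=1$) it is $\mathrm{Rep}(G)$ for the finite subgroup $G\subset SU(2)$ attached to the affine diagram (and Temperley--Lieb at $\delta=2$ for the infinite graphs). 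Concretely these arise as group-type and orbifold constructions, the same flavor used to realize the $D_{2n}$ planar algebras below index $4$. The enumeration itself I would carry out either by listing flat connections up to gauge directly, or by reformulating as module categories over $\mathrm{Rep}(G)$, so that the count is governed by group-cohomological data ($H^2$ of a stabilizing subgroup, together with the discrete choices of generator and star); this is what should yield $n$ for $A_{2n-1}^{(1)}$, $n-2$ for $D_n^{(1)}$, and a single example for each of $E_6^{(1)}, E_7^{(1)}, E_8^{(1)}, A_\infty, D_\infty$.

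The main obstacle is verifying \emph{flatness} and proving completeness of the count. Biunitarity is a routine finite check, but flatness is genuinely subtle, and the real content lies in showing that every flat biunitary connection on a given affine graph is gauge-equivalent to one on the explicit list while the listed ones are pairwise inequivalent; this is the delicate computation handled in the cited literature. A secondary technical nuisance is the infinite-depth graphs $A_\infty$ and $D_\infty$, where one must also fix the Perron eigenvector used to define $\cG(\Gamma)$ and confirm that amenability still licenses Popa's theorem, so that the unique flat connection on each indeed corresponds to a unique hyperfinite subfactor; the two-sided line $A_{\infty,\infty}$ must likewise be checked against these constraints and seen not to contribute a new entry.
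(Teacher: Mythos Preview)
Your plan largely parallels the paper's sketch: norm-$2$ graphs are the affine Dynkin diagrams, $\star$ sits at a mark-$1$ vertex, existence comes from the McKay subgroups $G\subset SU(2)$, and the hard completeness argument is deferred to the literature. The flat-connection route you propose is indeed what several of the cited references carry out. But two points are off.

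First, the cohomological bookkeeping is wrong: the multiplicity is governed by $H^3$, not $H^2$. The paper's mechanism is that each McKay subfactor can be modified by twisting the even part by an element of the \emph{third} group cohomology of the image group in $SO(3)$; concretely, the $n$ realizations of $A_{2n-1}^{(1)}$ are parametrized by $H^3(\mathbb{Z}/n\mathbb{Z},\mathbb{C}^\times)\cong\mathbb{Z}/n\mathbb{Z}$, and the $n-2$ realizations of $D_n^{(1)}$ by certain elements of $H^3(D_{2(n-2)},\mathbb{C}^\times)$. Your ``module categories over $\mathrm{Rep}(G)$ counted by $H^2$ of a stabilizing subgroup'' does not produce these associator twists---that parametrizes module categories over a \emph{fixed} tensor category, whereas here the distinct realizations have genuinely inequivalent even parts (different pointed categories $\mathrm{Vec}_{\mathbb{Z}/n}^\omega$ in the $A$ case). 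So while the flat-connection half of your dichotomy is sound, the $H^2$ alternative would not yield the correct numbers.

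Second, the two-sided line---the paper's $A_\infty^{(1)}$---is \emph{realized}, by the infinite binary cyclic subgroup of $SU(2)$; it is discussed explicitly in the surrounding text even though it happens to be absent from the table in the theorem statement. Your phrase ``seen not to contribute a new entry'' is therefore a mistake rather than a step in the argument.
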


Here $A_\infty$ just comes from Temperley-Lieb.  In fact, at this value of the parameter, it can be thought of as coming from the compact group $\mathrm{SU}(2)$.  That is to say, the elements of Temperley-Lieb can be interpreted as maps between tensor powers of the $2$-dimensional representation, with the cap being the determinant map.  Any subgroup of $\mathrm{SU}(2)$ containing the center also gives a subfactor, and the principal graph corresponding to this subgroup is the graph for tensoring with the $2$-dimensional representation as in the McKay correspondence \cite{MR604577}. The grading into odd and even parts is given by the central character.  This construction can be modified slightly, by twisting the even part by an element of the third group cohomology satisfying certain conditions.

In fact, all subfactors of index $4$ come from such a subgroup $G \subset \mathrm{SU}(2)$ together with a cohomological twist.  In particular, the $A_{2n-1}^{(1)}$ come from the binary cyclic groups, the $D_n^{(1)}$ come from the binary dihedral groups, the exceptional E's come from the binary tetrahedral, octahedral, and icosahedral groups, $A_\infty$ comes from $\mathrm{SU}(2)$ itself, $A_\infty^{(1)}$ comes from the infinite binary cyclic group, and $D_\infty$ comes from the infinite binary dihedral group.  The $n$ different subfactor planar algebras with principal graph $A_{2n-1}^{(1)}$ are classified by $H^3(\Integer/ n \Integer, \mathbb{C}^\times)$ and the $n-2$ examples with principal graph $D_n^{(1)}$ are classified by certain elements of $H^3(D_{2(n-2)}, \mathbb{C}^\times)$.   The dual data is given by taking the dual of the corresponding representation of a subgroup of $SU(2)$.

There are some interesting phenomena at index $4$ which did not occur at smaller indices. Note that $A_n^{(1)}$ and $A_\infty^{(1)}$ are reducible, because the starred vertex has two neighbors (so ${}_N M_M$ has two irreducible summands).  Further note that there are several infinite depth examples.  Fortunately these infinite depth examples satisfy a property called ``strong amenability'', and Popa showed that in the strongly amenable case the standard invariant still determines the subfactor of the hyperfinite.  Finally, note that $4 = 2 \cdot 2$ is a product of allowed indices.  This means we can create new subfactors by combining smaller ones.  The universal such combination is called the Fuss-Catalan subfactor \cite{MR1437496}, and in this case gives $D_\infty$.  In some sense the $D_n^{(1)}$ also come from combining index $2$ subfactors.

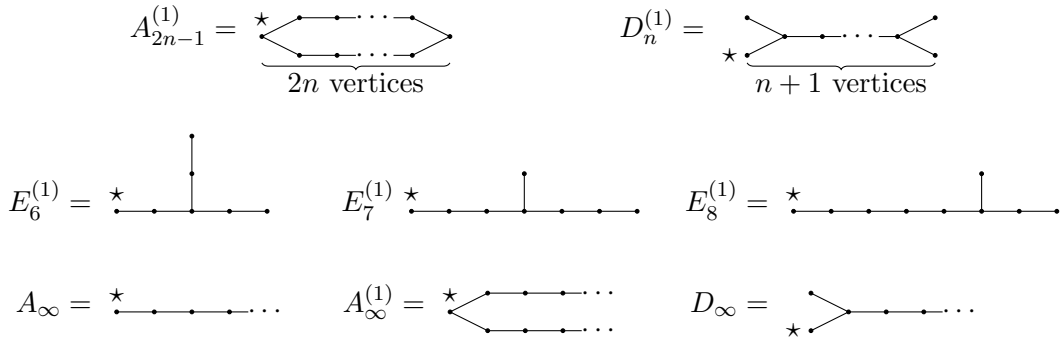
\begin{figure}
\begin{align*}
{A}_{2n-1}^{(1)} & = \begin{tikzpicture}[baseline=0, scale=.5]
\filldraw (0,0) circle (.5mm) node [above] {$\star$};
\filldraw (1,.5) circle (.5mm);
\filldraw (2,.5) circle (.5mm);
\node at (3,.5) {$\cdots$};
\filldraw (1,-.5) circle (.5mm);
\filldraw (2,-.5) circle (.5mm);
\node at (3,-.5) {$\cdots$};
\filldraw (4,.5) circle (.5mm);
\filldraw (4,-.5) circle (.5mm);
\filldraw (5,0) circle (.5mm);
\draw (0,0)--(1,.5)--(2.5,.5);
\draw (0,0)--(1,-.5)--(2.5,-.5);
\draw (3.5,.5)--(4,.5)--(5,0)--(4,-.5)--(3.5,-.5);
\draw[decorate,decoration={brace, mirror}] (0,-.7)--(5,-.7) node[midway, below] {$2n$ vertices};
\end{tikzpicture}
& 
{D}_{n}^{(1)} & = \begin{tikzpicture}[baseline=0, scale=.5]
\filldraw (0,-.5) circle (.5mm) node [left] {$\star$};
\filldraw (1,0) circle (.5mm);
\filldraw (0,.5) circle (.5mm);
\filldraw (2,0) circle (.5mm);
\node at (3,0) {$\cdots$};
\filldraw (4,0) circle (.5mm);
\filldraw (5,.5) circle (.5mm);
\filldraw (5,-.5) circle (.5mm);
\draw (0,.5)--(1,0)--(2.5,0);
\draw (1,0)--(0,-.5);
\draw (3.5,0)--(4,0)--(5,.5);
\draw (4,0)--(5,-.5);
\draw[decorate,decoration={brace, mirror}] (0,-.7)--(5,-.7) node[midway, below] {$n+1$ vertices};
\end{tikzpicture}
\end{align*}

\begin{align*}
{E}_6^{(1)} & = \begin{tikzpicture}[baseline=0, scale=.5]
\filldraw (0,0) circle (.5mm) node [above] {$\star$};
\filldraw (1,0) circle (.5mm);
\filldraw (2,0) circle (.5mm);
\filldraw (3,0) circle (.5mm);
\filldraw (4,0) circle (.5mm);
\filldraw (2,1) circle (.5mm);
\filldraw (2,2) circle (.5mm);
\draw (0,0)--(4,0);
\draw (2,0)--(2,2);
\end{tikzpicture}& {E}_7^{(1)} & \begin{tikzpicture}[baseline=0, scale=.5]
\filldraw (-1,0) circle (.5mm) node [above] {$\star$};
\filldraw (0,0) circle (.5mm);
\filldraw (1,0) circle (.5mm);
\filldraw (2,0) circle (.5mm);
\filldraw (3,0) circle (.5mm);
\filldraw (4,0) circle (.5mm);
\filldraw (5,0) circle (.5mm);
\filldraw (2,1) circle (.5mm);
\draw (-1,0)--(5,0);
\draw (2,0)--(2,1);
\end{tikzpicture}&{E}_8^{(1)}& = \begin{tikzpicture}[baseline=0, scale=.5]
\filldraw (-3,0) circle (.5mm) node [above] {$\star$};
\filldraw (-2,0) circle (.5mm);
\filldraw (-1,0) circle (.5mm);
\filldraw (0,0) circle (.5mm);
\filldraw (1,0) circle (.5mm);
\filldraw (2,0) circle (.5mm);
\filldraw (3,0) circle (.5mm);
\filldraw (4,0) circle (.5mm);
\filldraw (2,1) circle (.5mm);
\draw (-3,0)--(4,0);
\draw (2,0)--(2,1);
\end{tikzpicture}
\\\\
{A}_\infty &= \begin{tikzpicture}[baseline=0, scale=.5]
\filldraw (0,0) circle (.5mm) node [above] {$\star$};
\filldraw (1,0) circle (.5mm);
\filldraw (2,0) circle (.5mm);
\filldraw (3,0) circle (.5mm);
\node at (4,0) {$\cdots$};
\draw (0,0)--(3.5,0);
\end{tikzpicture}&
{A}_{\infty}^{(1)} &= \begin{tikzpicture}[baseline=0, scale=.5]
\filldraw (0,0) circle (.5mm) node [above] {$\star$};
\filldraw (1,.5) circle (.5mm);
\filldraw (2,.5) circle (.5mm);
\filldraw (3,.5) circle (.5mm);
\node at (4,.5) {$\cdots$};
\filldraw (1,-.5) circle (.5mm);
\filldraw (2,-.5) circle (.5mm);
\filldraw (3,-.5) circle (.5mm);
\node at (4,-.5) {$\cdots$};
\draw (0,0)--(1,.5)--(3.5,.5);
\draw (0,0)--(1,-.5)--(3.5,-.5);
\end{tikzpicture}
&
{D}_\infty &= \begin{tikzpicture}[baseline=0, scale=.5]
\filldraw (0,-.5) circle (.5mm) node [left] {$\star$};
\filldraw (1,0) circle (.5mm);
\filldraw (0,.5) circle (.5mm);
\filldraw (2,0) circle (.5mm);
\filldraw (3,0) circle (.5mm);
\node at (4,0) {$\cdots$};
\draw (0,.5)--(1,0)--(3.5,0);
\draw (1,0)--(0,-.5);
\end{tikzpicture}
\end{align*}
\caption{The affine Dynkin diagrams together with the only possible location for $\star$ up to symmetry.}
\label{fig:affine}
\end{figure}

\section{Index from 4 to 5}

Over the last decade, the classification of small index subfactors has gradually been pushed up to index 5. In this section, we begin in \S \ref{sec:statement} with the statement of the classification theorem. After that, we briefly discuss in \S \ref{sec:examples} the actual examples occurring in this range, along with the historical development  of the classification. In \S \ref{sec:proving} we explain the major techniques involved in proving the theorem.

\subsection{Statement of the theorem}
\label{sec:statement}

The following theorem represents the culmination of work of many authors. 
It is proved in the papers \cite{1007.1730, index5-part2, index5-part3, index5-part4}, which in turn rely on constructions given in \cite{MR1686551, 0909.4099, MR999799, MR1832764}, 
and a uniqueness result from \cite{1102.2052}. 
The combinatorial enumeration of possible principal graphs in \cite{1007.1730} builds on the earlier work of \cite{MR1317352}. 
The classification relies on the powerful number theoretic obstructions proved in \cite{1004.0665} which generalize earlier work of \cite{MR2472028}.  
This theorem supersedes an earlier classification below index $3+\sqrt{3}$ from \cite{MR1317352, MR1625762, MR1686551, MR2472028, 0909.4099} which we discuss in more detail in \S \ref{sec:proving}.

\begin{theorem}
\label{main-theorem}
If an extremal subfactor has index between $4$ and $5$ its standard invariant is Temperley-Lieb (with principal graph $A_\infty$) or it is one of the following ten planar algebras.

\begin{itemize}
\item The Haagerup planar algebra \cite{MR1686551}, with index $\frac{5+\sqrt{13}}{2}$ and principal bigraph pair $$\left(\bigraph{bwd1v1v1v1p1v1x0p0x1v1x0p0x1duals1v1v1x2v2x1} \bigraph{bwd1v1v1v1p1v1x0p1x0duals1v1v1x2}\right)$$ and its dual.
\item The extended Haagerup planar algebra \cite{0909.4099}, with index $\frac{8}{3}+\frac{2}{3} \operatorname{Re} \sqrt[3]{\frac{13}{2} \left(-5-3 i \sqrt{3}\right)}$ and principal bigraph pair $$\left(\bigraph{bwd1v1v1v1v1v1v1v1p1v1x0p0x1v1x0p0x1duals1v1v1v1v1x2v2x1}, \bigraph{bwd1v1v1v1v1v1v1v1p1v1x0p1x0duals1v1v1v1v1x2}\right)$$ and its dual.
\item The Asaeda-Haagerup planar algebra \cite{MR1686551}, with index $\frac{5+\sqrt{17}}{2}$ and principal bigraph pair $$\left(\bigraph{bwd1v1v1v1v1v1p1v1x0p0x1v1x0p0x1p0x1v1x0x0v1duals1v1v1v1x2v2x1x3v1}, \bigraph{bwd1v1v1v1v1v1p1v0x1p0x1v0x1v1duals1v1v1v1x2v1}\right)$$ and its dual.
\item The 3311 Goodman-de la Harpe-Jones planar algebra \cite{MR999799}, with index $3+\sqrt{3}$ and principal bigraph pair $$\left(\bigraph{bwd1v1v1v1p1p1v1x0x0v1duals1v1v1x2x3v1}, \bigraph{bwd1v1v1v1p1p1v1x0x0v1duals1v1v1x2x3v1}\right)$$ and its dual (since it is not self-dual despite having the same principal and dual principal graphs \cite{MR1355948}).
\item Izumi's self-dual 2221 planar algebra \cite{MR1832764} and its complex conjugate, with index $\frac{5+\sqrt{21}}{2}$ and principal bigraph pair $$\left(\bigraph{bwd1v1v1p1p1v1x0x0p0x1x0duals1v1v2x1}, \bigraph{bwd1v1v1p1p1v1x0x0p0x1x0duals1v1v2x1}\right).$$
\end{itemize}

\end{theorem}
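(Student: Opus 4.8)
The statement classifies standard invariants of extremal subfactors, which are exactly the spherical subfactor planar algebras; so it suffices to show that the only such planar algebras with loop value $\delta$ satisfying $\delta^2 \in (4,5)$ are Temperley--Lieb and the ten listed examples. My plan is the two-phase strategy pioneered by Haagerup: first enumerate all principal graph pairs $(\Gamma,\Gamma')$ that could conceivably arise, and then, for the finite surviving list, construct the planar algebra and prove uniqueness. For the reduction, recall from Fact~\ref{fact:index-and-graph-norm} that $[M:N]=\|\Gamma\|^2$ in finite depth and $[M:N]\geq\|\Gamma\|^2$ in general. If the principal graph is the infinite line $A_\infty$ then the planar algebra is forced to be $TL(\delta)$, since $TL(\delta)\to\cP$ is injective for $\delta\geq 2$ and $A_\infty$ leaves no room for additional low-weight generators. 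Otherwise the graph differs from $A_\infty$ and hence must branch at some finite depth; part of the output of the enumeration will be that every such graph is in fact finite (so that $\|\Gamma\|^2=[M:N]\in(4,5)$, i.e.\ $\|\Gamma\|\in(2,\sqrt5)$) and lies on the short list. All the difficulty is concentrated here.

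Phase 1 is the combinatorial and number-theoretic enumeration. Every candidate begins as a linear chain, its \emph{supertransitivity}, so I would organize the search as a branch-and-bound over the possible first branchings, tracking the pair $(\Gamma,\Gamma')$ together with the duality and height matching. Two families of obstructions cut the tree down. The combinatorial/associativity obstructions --- compatibility of the two graphs under Frobenius reciprocity, integrality of the dimensions read off from the Perron--Frobenius eigenvector, and triple-point and branch obstructions derived from quadratic-tangle and connection analysis --- prune most extensions. The decisive new ingredient is the number-theoretic obstruction: for a finite-depth subfactor $\|\Gamma\|^2$ is a cyclotomic integer whose Galois conjugates are tightly constrained, which is extraordinarily restrictive on the interval $(4,5)$. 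Formalizing this as a ``vines and weeds'' enumeration, one shows the tree reduces to a small explicit list of understood one-parameter families (vines, which in this range are only the $A_\infty$ tails) together with finitely many unresolved pairs (weeds), each then dispatched by hand or by a further obstruction.

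Phase 2 treats the finite surviving list, deciding existence and count for each candidate pair. Existence is established by exhibiting the planar algebra, either through a generators-and-relations presentation (as for extended Haagerup) or, most systematically, via the embedding $\cP\hookrightarrow\cG(\Gamma(\cP))$: one searches inside the graph planar algebra $\cG(\Gamma)$ for a biunitary flat connection whose flat subalgebra has the prescribed principal graph pair, equivalently for a low-weight generator satisfying the required relations. Uniqueness is then the statement that this flat connection is unique up to gauge, again a finite connection computation on the 4-partite graph. Running this for the survivors yields exactly the Haagerup, extended Haagerup, and Asaeda--Haagerup planar algebras, the $3+\sqrt3$ Goodman--de la Harpe--Jones $3311$ planar algebra, and Izumi's self-dual $2221$ planar algebra; adjoining the duals and complex conjugates forced by the non-self-dual cases produces the count of ten.

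I expect the main obstacle to be Phase 1: the enumeration is enormous (the ``$100\times$ Haagerup'' estimate), so the real work is in making the obstructions strong enough that the tree is genuinely finite --- in particular the number-theoretic obstruction, which is the essentially new tool, together with the triple-point obstructions that control the branchings. A secondary but serious obstacle is the construction and uniqueness of extended Haagerup, whose planar algebra resists the easy constructions and requires a delicate skein-theoretic argument.
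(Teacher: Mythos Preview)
Your two-phase architecture is exactly the paper's, and you have correctly identified the main ingredients: the odometer enumeration with associativity and triple-point obstructions, the number-theoretic cyclotomicity obstruction, and the case-by-case existence/uniqueness via graph planar algebras and connections.

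There is, however, a genuine confusion in your description of Phase~1 that would cause trouble if you tried to execute it. You write that the vines ``in this range are only the $A_\infty$ tails.'' This is not right: the vines are a large finite list of specific bigraph pairs (the Haagerup family, the Asaeda--Haagerup family, the hexagon family, and dozens more produced by the odometer), each of which represents an \emph{infinite} one-parameter family indexed by supertransitivity. The number-theoretic obstruction (cyclotomicity of the index, via Calegari--Morrison--Snyder, together with Ostrik's d-number condition on formal codegrees) is applied \emph{to each vine separately} to cut it down to finitely many survivors; this is precisely where the Haagerup and extended Haagerup graphs emerge as the only cyclotomic members of their vine. The $A_\infty$ case is handled at the outset and is not a vine in this sense.

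You also underspecify the weed elimination, which in the paper's account is where most of the new work lies. The odometer does \emph{not} terminate with an empty weed set below index~5; five named weeds $\cC,\cF,\cB,\cQ,\cQ'$ survive and each requires a distinct argument: $\cB$ falls to Popa's stability theorem (ruling out the infinite $A_\infty$ tails you would otherwise have to confront), $\cC$ and $\cF$ to a sharpened triple-point obstruction relating the dimension ratio past the branch to a rotation eigenvalue, and $\cQ,\cQ'$ to a connection analysis at the quadruple point that forces the index to equal that of $nn11$, after which number theory and enumeration leave only $3311$. Your proposal gestures at ``dispatched by hand or by a further obstruction,'' but these five eliminations are the technical heart of the classification and would need to be named explicitly for the proof to go through.
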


Except for $A_\infty$ all of these graphs are finite, so by Popa's reconstruction theorem this gives a classification of non-$A_\infty$ subfactors of the hyperfinite $\II_1$ factor of index less than $5$.  On the other hand, $A_\infty$ is not amenable, and no classification of $A_\infty$ subfactors of the hyperfinite $\II_1$ factor with index less than $5$ is known (see \S \ref{sec:TLsubfactors}).

Furthermore, it is not too difficult to see that all non-extremal subfactor planar algebras of index between $4$ and $5$ are perturbations \cite{1111.1362,1009.0186, 1011.1808} of the $A_\infty^{(1)}$ planar algebra at index $4$.  There is exactly one such perturbation for each index value above $4$.  Since the index $4$ subfactor with principal graph $A_\infty^{(1)}$ is strongly amenable, these perturbed planar algebras can be realized uniquely as subfactors of the hyperfinite $\II_1$ \cite{MR1278111}.  These subfactors of the hyperfinite $\II_1$ are exactly the reducible subfactors of index greater than $4$ constructed in \cite{MR0696688}.

\subsection{Examples}
\label{sec:examples}

\subsubsection{The Haagerup subfactor} \label{Haagerup}
By the mid '90's the classification of index $\leq 4$ subfactors was complete, and it was also known that there are no principal graphs whose norm is between $2$ and $2.00659$ (the graph norm of $E_{10}$).  At the time the smallest known finite index subfactor above $4$ was the GHJ subfactor with index $3+\sqrt{3}$.  Then in
a tour de force in the mid '90's Haagerup showed \cite{MR1317352} that in fact there are no finite depth
subfactors between $4$ and $\frac{5+\sqrt {13}}{2}$ and announced an example
with that index.  The details of this construction appeared in joint work with Asaeda \cite{MR1686551}.  Their construction uses connections, and the main technique is to exploit the existence of nontrivial invertible objects.  It is easy to determine the connection attached to the generating object and to the invertible objects, and then a difficult calculation of a certain intertwiner shows that together these generate a finite tensor category.

There is a feeling that the first three subfactors in this classification are ``exotic'' (not strongly related to previously well understood subfactors) or ``sporadic'' (not belonging to any family).  See \cite{MR2468378, 1002.0168} for some evidence in this direction, and see \cite{MR2837122} for some interesting suggestions concerning how the Haagerup subfactor may not be sporadic after all.  In particular, the evidence increasingly suggests that the Haagerup subfactor does lie in an infinite (discrete) family of subfactors as explained below.  

In either case it is highly
desirable to have alternative constructions of these subfactors. Izumi \cite{MR1782145,MR1832764} 
developed a method based on endomorphisms and the Cuntz algebras, which  allowed a second construction of the Haagerup subfactor.   We will discuss Izumi's powerful techniques in more detail when we look at the $2221$ subfactor below.
Using this description, Izumi also computed the Drinfel'd center \cite{MR1151906, MR1107651} of the Haagerup fusion categories, and he constructed a subfactor with principal graph that looks rather like Haagerup's, but with the $3$-fold symmetry replaced by a $5$-fold symmetry:
$$\left(\bigraph{bwd1v1v1v1p1p1p1v1x0x0x0p0x1x0x0p0x0x1x0p0x0x0x1v1x0x0x0p0x1x0x0p0x0x1x0p0x0x0x1duals1v1v1x2x3x4v2x1x4x3}, \bigraph{bwd1v1v1v1p1p1p1v0x0x1x0p0x0x1x0p0x0x0x1p0x0x0x1duals1v1v1x2x3x4}\right)$$

A very interesting question is finding out when there are Izumi subfactors with any given finite abelian symmetry group.  Izumi gives explicit equations whose solutions each yield such a subfactor, but solving these equations remains difficult.  Evans and Gannon \cite{MR2837122} constructed an Izumi subfactor with symmetry group $\mathbb{Z}/7$, two subfactors with symmetry group $\mathbb{Z}/9$, and showed that there was none with symmetry group $\mathbb{Z}/3 \times \mathbb{Z}/3$.  Izumi has also constructed such subfactors with symmetry group $\mathbb{Z}/2 \times \mathbb{Z}/2$ and $\mathbb{Z}/4$.

Peters \cite{MR2679382} gave a third construction of the Haagerup subfactor which used planar algebra techniques to give another construction via ``generators and relations''.   These techniques were modified to give a construction of the extended Haagerup subfactor, so we will discuss Peters's techniques in more detail in a later section. 

The Haagerup subfactor has a rich structure which is only beginning to be fully understood.  For example, the Haagerup subfactor provides a negative answer to the number theoretic question of Etingof-Nikshych-Ostrik \cite[\S 2]{MR2183279}, asking whether every fusion category can be defined over a cyclotomic field, see \cite{1002.0168}.  Izumi and Grossman showed that there is an interesting quadrilateral of intermediate subfactors related to Haagerup \cite{MR2418197} (see \cite{1102.2631} for a ``trivial'' construction of this quadrilateral).  That is to say, there are two intermediate subfactors $N \subset P\subset M$ and $N \subset Q \subset M$ such that the inclusions $N \subset P$ and $N \subset Q$ give the Haagerup subfactor.  Grossman and Snyder \cite{1102.2631} showed that there is an additional fusion category Morita equivalent to the Haagerup fusion categories, and classified all subfactors coming from the Haagerup subfactor (several of which were previously unknown).

Subfactors coming from groups and quantum groups have a richer structure: they come from conformal field theories.  It would be very interesting to know whether the Haagerup subfactor also has this additional structure.  Evans and Gannon have given some evidence that there is a Haagerup conformal field theory and given some suggestions towards a construction \cite{MR2837122}.

\subsubsection{The Asaeda-Haagerup subfactor}

The second subfactor on this list to be constructed was the Asaeda-Haagerup subfactor of index $\frac{5+\sqrt{17}}{2}$.  This was constructed by Asaeda and Haagerup using a similar technique to their construction of the Haagerup subfactor, but with a more involved calculation.  This technique works again because there is a nontrivial invertible object.  The Asaeda-Haagerup subfactor is generally poorly understood.  It cannot be constructed directly using Izumi's techniques, since there are too few invertible objects. We do not yet have a planar algebraic description of Asaeda-Haagerup, and the Drinfel'd center of the Asaeda-Haagerup subfactor is unknown.

On the other hand, like the Haagerup subfactor, the Asaeda-Haagerup subfactor lies in an interesting quadrilateral of intermediate subfactors \cite{MR2812458}.   This was conjectured by Grossman and Izumi \cite{MR2418197}, and constructed by Asaeda and Grossman \cite{MR2812458}.  The other subfactor appearing in this quadrilateral has index $\frac{7+\sqrt{17}}{2}$ and it lies in yet another quadrilateral with another subfactor of index $\frac{9+\sqrt{17}}{2}$.  This suggests that the Asaeda-Haagerup subfactor has a very rich structure.  Subsequently Grossman and Snyder \cite{1202.4396} showed that the Brauer-Picard group of Asaeda-Haagerup is $\mathbb{Z}/2 \times \mathbb{Z}/2$, unlike the Haagerup case where the Brauer-Picard group is trivial.  This same calculation produced over a hundred new subfactors whose even parts are Morita equivalent to the even parts of the Asaeda-Haagerup subfactor.

\subsubsection{The extended Haagerup subfactor}
\label{sec:EH}
In 2009 Bigelow, Morrison, Peters, and Snyder \cite{0909.4099} gave a construction of a long suspected \cite{MR1633929} missing case: the extended Haagerup subfactor.   This construction roughly follows the outline of Peters's construction of the Haagerup subfactor \cite{MR2679382}, but with the addition of a key new idea.  Both constructions are based on foundational work from \cite{math/1007.1158}.  We will quickly sketch this construction.  The basic idea is to exhibit this planar algebra as the planar subalgebra of the graph planar algebra of its principal graph generated by a specific element satisfying certain concrete relations.  On the one hand, since it is a planar subalgebra of the graph planar algebra it is automatically unitary and non-trivial, while on the other hand using the generator and relations it is possible to prove that the $0$-box space is $1$-dimensional.  The latter argument requires an evaluation algorithm (simplifying diagrams into a canonical form) called the jellyfish algorithm.

Given a pair of candidate principal graphs, it is often possible to identify certain relations that must hold for certain elements of the corresponding planar algebra. In particular, following \cite{math/1007.1158}, if the principal graphs are $n$-supertransitive, and then have triple points, there must be a lowest weight rotational eigenvector in the $P_{n+1,+}$ space which satisfies
\begin{equation}
\label{eq:S2}
S^2  = (1-r) S + r f^{(n)}.
\end{equation}
Here $r$ is the ratio (greater than 1) of the two dimensions immediately past the branch point and $f^{(n)}$ is the $n$-strand Jones-Wenzl idempotent.
This is an easy calculation; the idempotents past the branch point must be linear combinations of $f^{(n+1)}$ and $S$, with known traces, summing to $f^{(n+1)}$. As we know that the planar algebra embeds in the graph planar algebra of the principal graph, we can then attempt to solve such equations in the graph planar algebra. (Often this is extremely difficult, requiring a combination of exact techniques and numerical methods. Since the solutions are often discrete, high precision numerical solutions can be approximated by algebraic numbers and then checked exactly.)

After finding such an element, one wants to show that the subalgebra of the graph planar algebra it generates is in fact a subfactor planar algebra. The subalgebra inherits positivity from the graph planar algebra, and the principal challenge is to show that the $0$-box space of the subalgebra is one dimensional. Equivalently, we need to find some relations amongst certain planar combinations of the generator, such that these relations suffice to evaluate any closed diagram built out of copies of the generator as a scalar multiple of the empty diagram.  Up to this point, the argument has very closely followed that developed by Peters in her thesis \cite{MR2679382} on the Haagerup subfactor planar algebra. However the technique used to evaluate closed diagrams there cannot be applied in the extended Haagerup case.

The main new technique introduced in \cite{0909.4099} is called the \emph{jellyfish algorithm}.
Typically, algorithms to simplify planar diagrams use some notion of the complexity of the diagram, and show that it is always possible to use some relation locally in the diagram to reduce the complexity. As an example, the relation for the Kauffman bracket in \S \ref{sec:TL} allows one to evaluate the Kauffman bracket of a link simply by monotically reducing the number of crossings. Kuperberg's analysis \cite{MR1403861} of the planar generators and relations for rank 2 Lie algebras similarly makes use of the fact that any closed planar trivalent graph has a face smaller than a hexagon, and he has relations that allow removing these in a way that reduces the total number of vertices.
The jellyfish algorithm is somewhat unusual in that its complexity function is nonlocal---it simplifies diagrams by reducing the distance from each generator to the outside of the diagram. (The name intends to evoke jellyfish floating to the surface of the ocean.) The ideas behind the jellyfish algorithm grew out of work on skein theories for the $D_{2n}$ and $E_n$ planar algebras below index $4$ \cite{MR2559686, MR2577673}.  These ideas have been further developed in \cite{1208.1564}, where it is noted that there is a deep connection between the jellyfish algorithm and a much older argument of Popa's \cite{MR1334479}.

In the extended Haagerup planar algebra, one finds the relations appearing in Figure \ref{fig:box-jellyfish}, where a box labelled $f^{(k)}$ denotes the $k$-strand Jones-Wenzl idempotent (c.f. \S \ref{sec:TL}).  

\begin{figure}[ht]
         \newcommand{\JW}[1]{f^{(#1)}}
	\begin{align*}
  	\scalebox{0.8}{%
%\beginpgfgraphicnamed{diagrams/tikz/#1-external}%
\begin{tikzpicture}[STrain]
	\RainbowOne;
        \draw (0,0)--(0,-0.5);
        \node[anchor=west] at (0,-0.35) {\footnotesize$2n+2$};
	\JWPlusTwo;
\end{tikzpicture}%
%\endpgfgraphicnamed
}
	        & = i \frac{\sqrt{[n][n+2]}}{[n+1]}
	\scalebox{0.8}{%
%\beginpgfgraphicnamed{diagrams/tikz/#1-external}%
\begin{tikzpicture}[STrain]
	\STrainStrings{$n+1$}{$n+1$} \STrainOne
        \draw (0,0)--(0,-0.5);
        \node[anchor=west] at (0,-0.35) {\footnotesize$2n+2$};
	\JWPlusTwo
\end{tikzpicture}%
%\endpgfgraphicnamed
}, \\
	\scalebox{0.8}{%
%\beginpgfgraphicnamed{diagrams/tikz/#1-external}%
\begin{tikzpicture}[STrain]
	\RainbowTwo
        \draw (0,0)--(0,-0.5);
        \node[anchor=west] at (0,-0.35) {\footnotesize$2n+4$};
	\JWPlusFour
\end{tikzpicture}%
%\endpgfgraphicnamed
} 
	     &    = \frac{[2][2n+4]}{[n+1][n+2]}
        \scalebox{0.8}{%
%\beginpgfgraphicnamed{diagrams/tikz/#1-external}%
\begin{tikzpicture}[STrain]
	\STrainThreeStrings{$n+1$}{$2$}{$n+1$} \STrainOneOne
        \draw (0,0)--(0,-0.5);
        \node[anchor=west] at (0,-0.35) {\footnotesize$2n+4$};
	\JWPlusFour
\end{tikzpicture}%
%\endpgfgraphicnamed
}.
	\end{align*}
	\caption{The `box' jellyfish relations for the extended Haagerup planar algebra, from \cite{0909.4099}. (Here $n=8$ corresponds to the extended Haagerup subfactor, but everything here applies equally to the Haagerup factor if we use $n=4$.)
}
	\label{fig:box-jellyfish}
\end{figure}

Expanding out the Jones-Wenzl idempotents as a sum of individual diagrams, and moving all the non-identity terms to the right hand side, one sees that the first relation allows moving a generator `through a single string'. We can only move from the shaded side of the string to the unshaded side, and when we move through there may be more than one copy of the generator. Nevertheless, if the initial generator was at depth $2k+1$ from the surface, each of the new generators is at depth $2k$. The second relation allows moving a generator past a pair of strings (beginning and ending in unshaded regions), again at the expense of increasing the number of generators in the diagram. One can think of these relations as poor substitutes for an actual braiding, which would allow pulling generators to the outside while preserving the number of generators. Applying these moves many times, we eventually reach a linear combination of diagrams where all generators are adjacent to the outside region. See Figure \ref{fig:jellyfish} for a schematic representation of this process.
\begin{figure}[!htb]
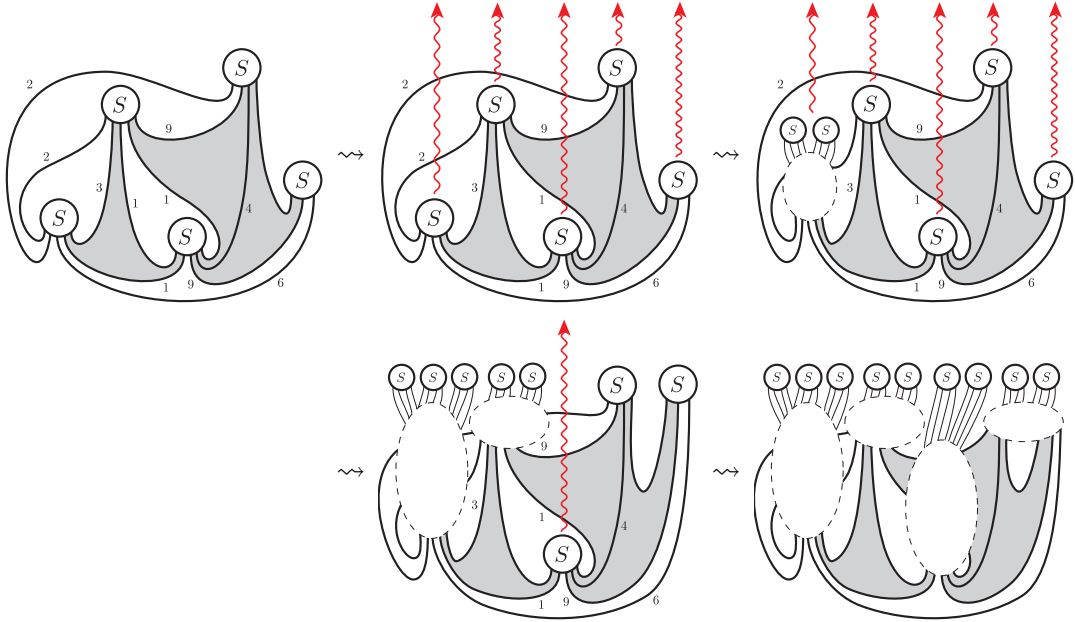

\begin{align*}
\mathfig{0.28}{jellyfish/network} & \rightsquigarrow
\mathfig{0.28}{jellyfish/network-paths} \rightsquigarrow
\mathfig{0.28}{jellyfish/network-paths-2} \\ & \rightsquigarrow
\mathfig{0.28}{jellyfish/network-paths-3} \rightsquigarrow
\mathfig{0.28}{jellyfish/network-paths-4}
\end{align*}
\caption{The initial steps of the jellyfish algorithm. The dotted ovals represent linear combinations of Temperley-Lieb diagrams. This is only a schematic illustration---to be precise, the result should be a linear combination of diagrams with various (sometimes large) numbers of copies of $S$.}
\label{fig:jellyfish}
\end{figure}

Once all the generators are in the outside region, it is easy to show that two must be connected by at least $n$ strings, and then Equation \eqref{eq:S2} allows us to replace that pair with a linear combination of diagrams with fewer generators, in such a way that the entire diagram is still in the prescribed form. Eventually we reach a diagram with no generators, which is just a collection of planar embedded circles, and evaluate this to a number.

Given that the relations described here have a parameter $n$ which can be specialized to give either the Haagerup or the extended Haagerup planar algebra, it seems interesting to understand what happens at other values of $n$. At present, we know only a number theoretic argument for why there are no other planar algebras in this family (higher values of $n$ give principal graphs with non-cyclotomic index, see \S \ref{sec:vines}). Presumably one can show that the relations are inconsistent (that is, the jellyfish algorithm can be applied in two different ways to some closed diagram, giving different answers) and so attempting to specify the planar algebra by generators and relations collapses to the zero algebra. At this point, the only proof of consistency of evaluation for $n=4$ and $n=8$ that we know is the existence of an explicit element in the appropriate graph planar algebra satisfying the relations.

\subsubsection{The Goodman-de la Harpe-Jones 3311 subfactor}
\label{sec:3311}

A 3311 subfactor was first constructed by Goodman, de la Harpe, and Jones in \cite{MR999799}.  Okamoto identified the principal graphs in \cite{MR1193932}.  Although the principal and dual principal graphs are the same, Kawahigashi proved that this subfactor is not self-dual \cite{MR1355948}.   He did this by showing that the GHJ subfactor and its dual yield different fusion rules on Bisch's list of possible $3311$ fusion rules \cite{MR1284945}. A 3311 subfactor was independently constructed via a formal embedding in \cite{MR1617550}, and identified as being the same as the GHJ subfactor in \cite[Proposition A.3]{MR1777347}.

There is a nice description of the GHJ construction from the module category perspective described in \S \ref{sec:module-categories}.  It turns out that for the subfactors of index less than $4$  the shading doesn't matter, so there are $ADE$ tensor categories as well as $ADE$ subfactors.  The $E_6$ fusion category can be thought of as module category over $A_{11}$. Choosing $X$ to be the trivial simple object in $E_6$, its internal endomorphisms  $\underline{\operatorname{Hom}}(X, X)$ gives an algebra object in $E_6$ and the corresponding subfactor is the 3311 subfactor.  From this perspective it is not difficult to see that the subfactor is not self-dual, as one of the even parts has the even part of $E_6$ as a subcategory while the other has the even part of $E_6$ as a quotient \cite[\S 5.1]{index5-part3}.

Proving that there is a unique (up to taking duals) subfactor with principal graph 3311 is somewhat difficult, as there is a $1$-parameter family of biunitary connections only two of which are flat. In unpublished work Rehren proved uniqueness directly by showing all but those two connections failed to be flat.  We gave a different argument in \cite{index5-part3} which requires less calculation to recover the same result.  We showed there that in a certain gauge choice, the eigenvalues for rotation can be read off from the connection.
This is a powerful condition necessary for flatness and computationally easy to check. Using this condition, it is easy to see that at most two points in the one-paramater family of connections can possibly be flat.  These two points yield the GHJ subfactor and its dual.

\subsubsection{The Izumi-Xu 2221 subfactor}
\label{sec:2221}

A 2221 subfactor was first constructed by Izumi in \cite{MR1832764}, using Cuntz algebras and type $\III$ factors.  Furthermore, in \cite{MR1832764} he proposed a method to construct a family of subfactors, whose even part fusion ring is a `near-group', with simple objects $G \cup \{X\}$ for some finite group $G$, with fusion rules $gX=X$, $X^2 = n' X + \sum_g g$. The first few cases, $G=\Integer/\Integer, \Integer/2\Integer$ and $\Integer/3\Integer$ with $n'=|G|$ correspond to the $A_4, E_6$ and $2221$ subfactors. Evans and Gannon in \cite{1208.1500} constructed many more such subfactors, and they and Izumi independently have proved that $n'=n-1$ or $n | n'$, where $n$ is the size of $G$.

The basic idea behind Izumi's approach is to realize the standard invariant as a collection of endomorphisms and isometries between these endomorphisms.  That is to say, look at the tensor category whose objects are endomorphisms of a fixed algebra $A$ and where a map from $\alpha$ to $\beta$ is an isometry $u$ satisfying $u \alpha(x) = \beta(x) u$.  This point of view is natural from the perspective of type $\III$ factors because any bimodule over a type $\III$ factor $M$ can be realized as coming from an endomorphism of $M$, and in this setting this tensor category is called the tensor category of sectors.  What Izumi noticed was that instead of looking at the whole factor $M$, you can instead look at a smaller subalgebra so long as it contains all the relevant isometries.  In particular, you can try to realize your standard invariant via endomorphisms of the Cuntz algebra, which is an algebra generated by certain isometries satisfying some simple relations.
This technique has proven quite powerful, especially for constructing subfactors where the even part has a relatively large group of invertible objects and where there's a single orbit of non-invertible objects.

\newcommand{\g}{\mathfrak{g}}
Independently, Xu found an alternative construction via a  conformal inclusion. Xu never published his construction, but essentially the same argument was given independently in a different language by Coquereaux--Rais--Tahri \cite{MR2742806}.  We will give a brief sketch of this argument in subfactor language.  There is a conformal inclusion of affine Lie algebras $(\hat \g_{G_2})_3\subset (\hat \g_{E_6})_1$ (see e.g. \cite{MR1424041}). Let $A$ denote the corresponding algebra object in the category of $(\g_{G_2})_3$ modules (the underlying object is the trivial plus the object of highest weight $(1,1)$). The subfactor built out of the $1-1$, $1-A$, $A-1$, and $A-A$ bimodule objects in $(\g_{G_2})_3$  has index $\frac{7+\sqrt{21}}{2}$ and principal graphs (``Haagerup with legs'')
$$\left(\bigraph{bwd1v1v1v1p1p1p1v0x0x0x1p0x0x0x1duals1v1v1x2x3x4}, \bigraph{bwd1v1v1v1p1p1p1v0x0x1x0p0x0x0x1v1x0p0x1duals1v1v1x2x3x4v2x1}\right).$$
%(every vertex on the first graph is self-dual because all representations of $G_2$ are self-dual; duality on the second graph is easily determined).
Take the reduced subfactor construction with respect to one of the univalent vertices at depth 4 on the second graph, and obtain a subfactor with index $\frac{1}{2}(15+3\sqrt{21})$, having 2 invertible objects at depth 2. These ensure that there is an intermediate subfactor of index 3, and hence the other intermediate subfactor has index $\frac{1}{2}(5+\sqrt{21})$.  One can identify the principal graph of this intermediate subfactor as 2221. It is not immediately obvious that the subfactor constructed in this way is the same as Izumi's.

Later Ostrik \cite[Appendix]{1004.0665} modified Xu's construction to show that there is actually a $\Integer/2\Integer$-graded fusion category with principal graph 2221; this immediately gives a self-dual 2221 subfactor.   In addition to Xu's construction, this argument uses the characterization of fusion categories Morita equivalent to group extensions of a given fusion category, from \cite{0809.3031}. As shown in \cite[Theorem 1.0.1]{1004.0665}, the $2221$ fusion category yields the smallest dimension above $2$ of any object in a fusion category.

Finally, Han showed that all these constructions give the same subfactor, and indeed that there is a unique subfactor with principal graph 2221, in his Ph.D. thesis \cite{1102.2052}. He achieved this by identifying the only possible embedding of a 2221 planar algebra in the 2221 graph planar algebra. His approach introduced an interesting new tool for studying embeddings in graph planar algebras. If $S \in P_{n,\pm}$, then the image of $S$ in the graph planar algebra must have all its moments multiples of the empty diagram (simply because moments are a planar operation). However, in the graph planar algebra the zero box space has the same dimension as the number of even (or odd, for the shaded zero box space) vertices. The empty diagram is the uniform linear combination of the zero length loops at each vertex. Han thus set out to find all lowest weight 3-boxes $S$ in the graph planar algebra, which are rotational eigenvectors, such that $\tr{(S^n)}$ is constant on the vertices. He found a unique such 3-box, thus uniquely determining the image of $P$ in the graph planar algebra.

\subsubsection{Temperley-Lieb subfactors} \label{sec:TLsubfactors}
To say that the principal graph of a subfactor is $A_\infty$ is the same thing as saying its standard invariant is Temperley-Lieb.  But this standard invariant is non-amenable (since the norm of $A_\infty$ is $4$, but the index is larger than $4$) so we cannot  apply Popa's results to realize this standard invariant
with a hyperfinite subfactor. (But recall that Popa showed that such subfactors 
do occur in non-hyperfinite factors \cite{MR1198815}.)  Indeed, very little is known about $A_\infty$ subfactors of the hyperfinite $\II_1$; at this stage only a handful
of examples.

There is one key technique of Ocneanu's which does allow construction of Temperley-Lieb subfactors with certain indices.  Given \emph{any} biunitary connection on a pair of  bipartite graphs and a positive eigenvector for the adjacency matrices, you can still construct a subfactor called the ``flat part" \cite{ MR996454, MR1865095, MR1642584}.  This subfactor will not necessarily have the graphs you started with as principal graphs, but if the graphs are finite it will have index the square of the graph norm.  Unless this index corresponds to one of the above five examples, the subfactor constructed this way must be a Temperley-Lieb subfactor.  This should give a large supply of $A_\infty$ subfactors though because of the poverty of their standard invariants they have been little studied.

The smallest possible graph norm above $4$ comes from the graph $E_{10}$ with index $4.02642\ldots$, and it is known (by unpublished work of Haagerup, Ocneanu, and Schou \cite{schou}) that this graph has a (non-flat) connection and thus that there is a subfactor of the hyperfinite $\II_1$ with this  index.  This is the smallest known index above $4$ for an irreducible subfactor of the hyperfinite $\II_1$ factor. 

Finite depth subfactors always have indices which are algebraic integers, and it is natural to wonder whether all subfactors of the hyperfinite $\II_1$ have this property.  Bisch \cite{MR1293872} gave a negative answer to this question by constructing a subfactor of the hyperfinite $\II_1$ (coming from a non-flat connection on an infinite graph) whose index is $9/2$.

It would be of great interest to have stronger results about the classification of small index Temperley-Lieb subfactors. Any such results would have a very different flavor from the classification techniques used in the rest of the paper.  Our techniques are algebraic and look only at the structure of the standard invariant, while classifying Temperley-Lieb subfactors is an analytic question concerning the detailed structure of the hyperfinite $\II_1$ factor.

There is one question one might ask on which progress should be possible.
The norms of graphs are a dense subset of the reals in the interval
$[\sqrt{2+\sqrt{5}},\infty)$ \cite{MR0986863}. From our experience it is relatively easy to construct 
connections on pairs of graphs so one may ask the question:
\begin{question}
Is the set of indices of irreducible subfactors of the hyperfinite
II$_1$ factors dense in any interval in $\mathbb R$?
\end{question}

A probably much more difficult question is the following:
\begin{question}
Is the set of indices of Temperley-Lieb subfactors of the hyperfinite
II$_1$ factors dense in any interval in $\mathbb R$?
\end{question}

There are no published arguments ruling out any specific number above $4$ as an index value of a irreducible hyperfinite subfactor, though Popa
announced in 1990 \cite{MR1159284} that there are no such indices between $4$ and
$4.02642$ (the norm of $E_{10}$, the largest root of $x^5-9 x^4+27 x^3-31 x^2+12 x-1$).

At this stage we do not know of any irreducible hyperfinite subfactor
whose index is transcendental.

\section{Proving classification results}
\label{sec:proving}

The general approach to proving classification results is to start with some combinatorial constraints of principal graphs, describe all graphs satisfying those constraints below the given index, and then for each graph either eliminate it as a possible principal graph using some stronger constraints or construct and classify subfactors with the given principal graph.  For example, the $ADE$ classification of subfactors of index less than $4$ follows this technique.  Since the graph norm is smaller than $2$ the full list of candidates are the $ADE$ Dynkin diagrams.  One then eliminates the $D_{odd}$ and $E_7$ graphs by showing they fail some test (for example as in \ref{sec:index-leq-4}), and then explicitly constructs the other subfactors, at the same time proving that they're unique (up to complex conjugation).  Since we have already discussed constructions in the previous section, in this section we will only discuss how to enumerate and eliminate candidate graphs.

Another example of this general outline is the classification of subfactors of index less than $3+\sqrt{3}$ which was initiated by Haagerup \cite{MR1317352}.  Using Ocneanu's ``triple point obstruction'' (c.f. \S \ref{sec:ocneanu-triple-point} below), Haagerup was able to limit the combinatorial possibilities for the principal graph near the first branch point.  This gives the following classification of graph pairs which satisfy an associativity condition (c.f. \S \ref{sec:associativity} below), pass the triple point test, and have index less than $3+\sqrt{3}$:
\begin{align*}
\left(\begin{tikzpicture}[baseline,scale=0.65]
\draw[fill] (0,0) circle (0.05);
\draw (0.,0.) -- (1.,0.);
\draw[fill] (1.,0.) circle (0.05);
\draw[dashed] (1.,0.) -- (2.,0.);
\draw[fill] (2.,0.) circle (0.05);
\draw (2.,0.) -- (3.,0.);
\draw[fill] (3.,0.) circle (0.05);
\draw (3.,0.) -- (4.,-0.5);
\draw (3.,0.) -- (4.,0.5);
\draw[fill] (4.,-0.5) circle (0.05);
\draw[fill] (4.,0.5) circle (0.05);
\draw (4.,-0.5) -- (5.,-0.5);
\draw (4.,0.5) -- (5.,0.5);
\draw[fill] (5.,-0.5) circle (0.05);
\draw[fill] (5.,0.5) circle (0.05);
\draw (5.,-0.5) -- (6.,-0.5);
\draw (5.,0.5) -- (6.,0.5);
\draw[fill] (6.,-0.5) circle (0.05);
\draw[fill] (6.,0.5) circle (0.05);
\draw[red, thick] (0.,0.) -- +(0,0.333333) ;
\draw[red, thick] (2.,0.) -- +(0,0.333333) ;
\draw[red, thick] (4.,-0.5) -- +(0,0.333333) ;
\draw[red, thick] (4.,0.5) -- +(0,0.333333) ;
\draw[red, thick] (6.,-0.5) to[out=135,in=-135] (6.,0.5);
\end{tikzpicture}\;\right.,&\quad\left.
\begin{tikzpicture}[baseline,scale=0.65]
\draw[fill] (0,0) circle (0.05);
\draw (0.,0.) -- (1.,0.);
\draw[fill] (1.,0.) circle (0.05);
\draw[dashed] (1.,0.) -- (2.,0.);
\draw[fill] (2.,0.) circle (0.05);
\draw (2.,0.) -- (3.,0.);
\draw[fill] (3.,0.) circle (0.05);
\draw (3.,0.) -- (4.,-0.5);
\draw (3.,0.) -- (4.,0.5);
\draw[fill] (4.,-0.5) circle (0.05);
\draw[fill] (4.,0.5) circle (0.05);
\draw (4.,-0.5) -- (5.,-0.5);
\draw (4.,-0.5) -- (5.,0.5);
\draw[fill] (5.,-0.5) circle (0.05);
\draw[fill] (5.,0.5) circle (0.05);
\draw[red, thick] (0.,0.) -- +(0,0.333333) ;
\draw[red, thick] (2.,0.) -- +(0,0.333333) ;
\draw[red, thick] (4.,-0.5) -- +(0,0.333333) ;
\draw[red, thick] (4.,0.5) -- +(0,0.333333) ;
\end{tikzpicture}\;
\right)
\\
\left(\begin{tikzpicture}[baseline,scale=0.65]
\draw[fill] (2.,0.) circle (0.05);
\draw (2.,0.) -- (3.,0.);
\draw[fill] (3.,0.) circle (0.05);
\draw[dashed] (3.,0.) -- (4.,0.);
\draw[fill] (4.,0.) circle (0.05);
\draw (4.,0.) -- (5.,0.);
\draw[fill] (5.,0.) circle (0.05);
\draw (5.,0.) -- (6.,-0.25);
\draw (5.,0.) -- (6.,0.25);
\draw[fill] (6.,-0.25) circle (0.05);
\draw[fill] (6.,0.25) circle (0.05);
\draw (6.,-0.25) -- (7.,-0.25);
\draw (6.,0.25) -- (7.,0.25);
\draw[fill] (7.,-0.25) circle (0.05);
\draw[fill] (7.,0.25) circle (0.05);
\draw (7.,-0.25) -- (8.,-0.5);
\draw (7.,0.25) -- (8.,0.);
\draw (7.,0.25) -- (8.,0.5);
\draw[fill] (8.,-0.5) circle (0.05);
\draw[fill] (8.,0.) circle (0.05);
\draw[fill] (8.,0.5) circle (0.05);
\draw (8.,-0.5) -- (9.,0.);
\draw[fill] (9.,0.) circle (0.05);
\draw (9.,0.) -- (10.,0.);
\draw[fill] (10.,0.) circle (0.05);
\draw[red, thick] (2.,0.) -- +(0,0.166667) ;
\draw[red, thick] (4.,0.) -- +(0,0.166667) ;
\draw[red, thick] (6.,-0.25) -- +(0,0.166667) ;
\draw[red, thick] (6.,0.25) -- +(0,0.166667) ;
\draw[red, thick] (8.,-0.5) to[out=135,in=-135] (8.,0.);
\draw[red, thick] (8.,0.5) -- +(0,0.166667) ;
\draw[red, thick] (10.,0.) -- +(0,0.166667) ;
\end{tikzpicture}\;\right.,&\quad\left.
\begin{tikzpicture}[baseline,scale=0.65]
\draw[fill] (2.,0.) circle (0.05);
\draw (2.,0.) -- (3.,0.);
\draw[fill] (3.,0.) circle (0.05);
\draw[dashed] (3.,0.) -- (4.,0.);
\draw[fill] (4.,0.) circle (0.05);
\draw (4.,0.) -- (5.,0.);
\draw[fill] (5.,0.) circle (0.05);
\draw (5.,0.) -- (6.,-0.5);
\draw (5.,0.) -- (6.,0.5);
\draw[fill] (6.,-0.5) circle (0.05);
\draw[fill] (6.,0.5) circle (0.05);
\draw (6.,0.5) -- (7.,-0.5);
\draw (6.,0.5) -- (7.,0.5);
\draw[fill] (7.,-0.5) circle (0.05);
\draw[fill] (7.,0.5) circle (0.05);
\draw (7.,0.5) -- (8.,0.);
\draw[fill] (8.,0.) circle (0.05);
\draw (8.,0.) -- (9.,0.);
\draw[fill] (9.,0.) circle (0.05);
\draw[red, thick] (2.,0.) -- +(0,0.333333) ;
\draw[red, thick] (4.,0.) -- +(0,0.333333) ;
\draw[red, thick] (6.,-0.5) -- +(0,0.333333) ;
\draw[red, thick] (6.,0.5) -- +(0,0.333333) ;
\draw[red, thick] (8.,0.) -- +(0,0.333333) ;
\end{tikzpicture}\;
\right)
\\
\left(\begin{tikzpicture}[baseline,scale=0.65]
\draw[fill] (0,0) circle (0.05);
\draw (0.,0.) -- (1.,0.);
\draw[fill] (1.,0.) circle (0.05);
\draw[dashed] (1.,0.) -- (2.,0.);
\draw[fill] (2.,0.) circle (0.05);
\draw (2.,0.) -- (3.,0.);
\draw[fill] (3.,0.) circle (0.05);
\draw (3.,0.) -- (4.,-0.5);
\draw (3.,0.) -- (4.,0.5);
\draw[fill] (4.,-0.5) circle (0.05);
\draw[fill] (4.,0.5) circle (0.05);
\draw (4.,-0.5) -- (5.,-0.5);
\draw (4.,0.5) -- (5.,0.5);
\draw[fill] (5.,-0.5) circle (0.05);
\draw[fill] (5.,0.5) circle (0.05);
\draw (5.,-0.5) -- (6.,0.);
\draw (5.,0.5) -- (6.,0.);
\draw[fill] (6.,0.) circle (0.05);
\draw[red, thick] (0.,0.) -- +(0,0.333333) ;
\draw[red, thick] (2.,0.) -- +(0,0.333333) ;
\draw[red, thick] (4.,-0.5) -- +(0,0.333333) ;
\draw[red, thick] (4.,0.5) -- +(0,0.333333) ;
\draw[red, thick] (6.,0.) -- +(0,0.333333) ;
\end{tikzpicture}\;\right.,&\quad\left.
\begin{tikzpicture}[baseline,scale=0.65]
\draw[fill] (0,0) circle (0.05);
\draw (0.,0.) -- (1.,0.);
\draw[fill] (1.,0.) circle (0.05);
\draw[dashed] (1.,0.) -- (2.,0.);
\draw[fill] (2.,0.) circle (0.05);
\draw (2.,0.) -- (3.,0.);
\draw[fill] (3.,0.) circle (0.05);
\draw (3.,0.) -- (4.,-0.5);
\draw (3.,0.) -- (4.,0.5);
\draw[fill] (4.,-0.5) circle (0.05);
\draw[fill] (4.,0.5) circle (0.05);
\draw (4.,-0.5) -- (5.,-0.5);
\draw (4.,-0.5) -- (5.,0.5);
\draw[fill] (5.,-0.5) circle (0.05);
\draw[fill] (5.,0.5) circle (0.05);
\draw (5.,-0.5) -- (6.,-0.5);
\draw (5.,0.5) -- (6.,0.5);
\draw[fill] (6.,-0.5) circle (0.05);
\draw[fill] (6.,0.5) circle (0.05);
\draw[red, thick] (0.,0.) -- +(0,0.333333) ;
\draw[red, thick] (2.,0.) -- +(0,0.333333) ;
\draw[red, thick] (4.,-0.5) -- +(0,0.333333) ;
\draw[red, thick] (4.,0.5) -- +(0,0.333333) ;
\draw[red, thick] (6.,-0.5) -- +(0,0.333333) ;
\draw[red, thick] (6.,0.5) -- +(0,0.333333) ;
\end{tikzpicture}\;
\right)
\end{align*}

Haagerup's actual statement is different in minor details: he doesn't explicitly show the dual data, but says that the supertransitivities of the first and third families must be 3 mod 4, and that the supertransitivity of the second family must be 5 mod 6. In fact, his paper only gave the detailed proof of the corresponding statement up to index $3+\sqrt{2}$, below which only the first graph in the first family above appears. Our results confirmed his stronger claim, of course.

Subsequently, each of these families was cut down to finitely many cases.  Haagerup immediately eliminated all but the Asaeda-Haagerup graph from the second family, using an unpublished technique (see \cite{index5-part2} for a reconstruction).  Bisch \cite{MR1625762} eliminated the third series entirely by showing that there was no consistent rules for fusion with the last vertex on the first graph. (Here's a terse reduction of his argument: Let $Y$ be the last vertex, and $X$ the generator. Working mod $2$, of course $\operatorname{dim} \operatorname{Hom}(2Y \otimes Y, Y) \equiv 0 \pmod 2$. But $2Y$ can be written as a polynomial in $X$, with constant term $1$, and $\operatorname{dim} \operatorname{Hom}(X^k \otimes Y, Y) \equiv 0 \pmod 2$ for $k>1$, by the symmetry of the graph.) Haagerup had already shown that the supertransitivity in the first family must be $3 \pmod 4$, and Asaeda-Yasuda \cite{MR2472028} then eliminated everything except Haagerup (with supertransitivity 3) and extended Haagerup (with supertransitivity 7) using a number theoretic argument which we will discuss further in \S \ref{sec:vines} below.

Our classification follows a similar outline. Via combinatorial arguments we restrict possible principal graphs to certain infinite families. These families come in two classes, called weeds and vines, described below. In Haagerup's classification, only vines appeared. These families must then be cut down to finitely many cases. This requires a specialized argument for each weed, but we introduce a general approach for any vine, subsuming the individual arguments described above for the three vines in Haagerup's classification.

\subsection{Enumerating potential principal graphs}
\label{sec:odometer}
In order to talk about classification theorems along the lines of Haagerup's theorem above, we introduce the following terminology.
We say in \cite{1007.1730} that a classification statement $\left(\Gamma_0, \Lambda, \cV, \cW\right)$ is the theorem:
\begin{quote}
Every subfactor whose principal bigraph pair is a translated extension of a fixed bigraph pair $\Gamma_0$ (and not $A_\infty$), and which has index strictly between $4$ and $\Lambda \in \Real$ has principal bigraph pair which is either
\begin{enumerate}
\item a translate (but not an extension!) of one of a certain set of bigraph pairs $\cV$, called ``vines'', or
\item a translate of an extension of one of a certain set of bigraph pairs $\cW$, called ``weeds''.
\end{enumerate}
\end{quote}

Here a translation of a bigraph pair means increasing the supertransitivity by an even amount. An extension of a bigraph pair adds new vertices and edges at strictly greater depths than the maximum depth of any vertex in the original pair. Haagerup's result above is the classification statement
\newcommand{\ao}{\left(\scalebox{0.4}{$\bigraph{bwd1duals1}$},\scalebox{0.4}{$\bigraph{bwd1duals1}$}\right)}
\newcommand{\at}{\left(\scalebox{0.4}{$\bigraph{bwd1v1duals1v1}$},\scalebox{0.4}{$\bigraph{bwd1v1duals1v1}$}\right)}
$$\left(\ao, 3+\sqrt{3}, \cV_H, \eset\right)$$
with
\begin{align*}
\cV_H & = \left \{ \left(\bigraph{bwd1v1v1v1p1v1x0p0x1v1x0p0x1duals1v1v1x2v2x1}, \bigraph{bwd1v1v1v1p1v1x0p1x0duals1v1v1x2}\right) \right . \\
	  & \qquad \left(\bigraph{bwd1v1v1v1p1v1x0p0x1v1x0p0x1p0x1v1x0x0v1duals1v1v1x2v2x1x3v1}, \bigraph{bwd1v1v1v1p1v0x1p0x1v0x1v1duals1v1v1x2v1}\right) \\
	  & \qquad \left. \left(\bigraph{bwd1v1v1v1p1v1x0p0x1v1x1duals1v1v1x2v1}, \bigraph{bwd1v1v1v1p1v1x0p1x0v1x0p0x1duals1v1v1x2v1x2}\right) \right\}
\end{align*}

Trivially, we always have classification statements of the form $\left(\Gamma_0, \Lambda, \emptyset, \{ \Gamma_0 \}\right)$, and in particular $\left(\ao, 5, \eset, \left\{ \ao \right\} \right)$. An intermediate subfactor argument \cite[\S 5]{1007.1730} \cite[Lemma 2.4]{1205.2742} shows that there are no 1-supertransitive subfactors with index between 4 and 5, so we further have the classification statement $$\left(\ao, 5, \eset, \left\{ \at \right\} \right).$$

Working combinatorially, we now proceed to refine classification statements in three ways.

\subsubsection{The odometer}
We can always take a bigraph pair out of a set of weeds, replacing it with all ways to extend by one depth, staying under the index limit. At the same time, we need to put that pair, along with any ways to extend just one graph of the pair, into the set of vines. See \cite[\S 2.3]{1007.1730} for a much more detailed explanation, including the combinatorics of enumerating all the depth one extensions. Repeatedly applying this process, we can arbitrarily increase the minimum depth of the pairs appearing in the set of weeds, at the expense of potentially greatly increasing the sizes of the sets of vines and weeds.

Furthermore, we can eliminate many weeds by applying some simple combinatorial tests which we explain below.  We call this process of repeatedly growing the list of weeds and vines, and then pruning using these combinatorial tests ``running the odometer".   In fortunate situations, eventually the odometer will eliminate all the weeds leaving only a list of vines.  This is exactly what occurred in Haagerup's classification up to index $3+\sqrt{3}$, but we are not so lucky going to index 5. Instead, careful decisions have to be made about when to stop running the odometer. We need to strike a balance between having few enough remaining weeds that they can be individually studied, but each rich enough that we can actually find obstructions for them.

\subsubsection{The associativity test} 
\label{sec:associativity}
The easiest combinatorial test which we can use to eliminate weeds just uses associativity of the fusion rules.  Recall that the principal graphs encode the fusion rules for tensoring on the right with ${}_N M_M$ and ${}_M M_N$. Combined with the dual data, we also know the fusion rules for tensoring on the left with these bimodules. Associativity then gives a condition because we can compare tensoring on the left and then right with tensoring on the right and then left.  This gives a  combinatorial obstruction for potential principal graph pairs.  In terms of the $4$-partite graph it is quite easy to understand; it says that the number of 2-step paths going half-way around the square between two specific vertices should be the same going either direction around the square.  This associativity condition is easy to see in any version of the standard invariant.  For example, in the language of paragroups, this associativity becomes the condition that biunitary matrices are square.

At first glance it appears that this associativity condition is relatively mild.  After all, it only uses properties of the principal graph, and if the two principal graphs are the same it often says nothing.  Nonetheless it is remarkably powerful as part of the odometer.  This is for three main reasons: it is easy to check, it can be checked locally if you know part of the principal graphs but not the whole thing, and it is remarkably effective in restricting graphs when used in tandem with Ocneanu's triple point obstruction below.

\subsubsection{Ocneanu's triple point obstruction}
\label{sec:ocneanu-triple-point}
Suppose that the principal graphs of a subfactor begin with an initial triple point.  The connection then contains a $3$-by-$3$ unitary matrix corresponding to this initial branch point, which satisfies certain normalization properties.  Remarkably, a simple linear algebra calculation shows that if the graphs are simple enough near the triple point then the combinatorial possibilities are extremely restricted.  This result was used to great effect by Haagerup who gives a proof in \cite{MR1317352} where he atributes it to Ocneanu.

The most general version of this triple point obstruction is a bit awkward to state (\cite[Lemmas 3.2 and 3.3]{1007.1730}).  However, it is easy to check and only requires knowing the princpal graphs two depths past the branch point.  To give the flavor of this obstruction we will state one special case.
\begin{proposition}
There are no subfactors whose principal graph and dual principal graph both begin like \scalebox{0.7}{$\smash{\bigraph{bwd1v1p1v1x0p1x0duals1v1x2}}$}.  

There are also no subfactors where both the principal and dual principal graphs begin like
 \scalebox{0.7}{$\smash{\bigraph{bwd1v1p1v1x0p0x1duals1v1x2}}$}.
\end{proposition}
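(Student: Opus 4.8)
The plan is to follow Ocneanu's computation in the connection picture, since these are exactly the degenerate cases for which the cheaper combinatorial tests are silent. Note first that in both statements $\Gamma = \Gamma'$, so the associativity test of \S\ref{sec:associativity} gives no information, and we genuinely need the triple point obstruction. Both forbidden configurations agree near $\star$: a single edge $\star - X$ followed by a triple point at the depth-$1$ vertex $X$ (so ${}_N M_M = X$ and $X\bar X = 1 \oplus P \oplus Q$), with the two children $P, Q$ at depth $2$ self-dual, as recorded by the duals string. They differ only at depth $3$: in the first graph both new vertices attach to $P$ (so $P$ is again a triple point and $Q$ is univalent), whereas in the second graph each of $P, Q$ carries one further edge.

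By the graph planar algebra embedding theorem it is enough to show that no biunitary flat connection on the corresponding $4$-partite graph can carry this local structure; equivalently, working in $\cG(\Gamma,\Gamma')$, no assignment of cell values near the branch can satisfy biunitarity. Concretely I would attach to the initial triple point its $3\times 3$ unitary connection matrix $W$, whose entries are normalized by the Frobenius--Perron weights $\sqrt{\dim P}, \sqrt{\dim Q}$ of the branch vertices (these weights are the relevant coordinates of the positive eigenvector of the adjacency matrix). Biunitarity of $W$, together with these normalization constraints, is a short linear-algebra identity; carrying it out extracts the rotational eigenvalue $\omega$ of the lowest-weight generator $S \in P_{2,+}$ as an explicit function of $\dim P$, $\dim Q$ and $\delta$. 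Since the lowest weight is $2$ we already know $\omega = \pm 1$, so the identity is really a determination of a sign in terms of the branch dimensions, and self-duality of $P, Q$ is what makes this sign well defined.

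The contradiction is then obtained two depths out, which is the only place the two cases differ. For the first graph, univalence of $Q$ forces its cell to be a pure phase while the second triple point at $P$ contributes another $3\times 3$ block; applying the same biunitarity-plus-normalization identity at $P$ re-determines the sign $\omega$, now in terms of the depth-$3$ dimensions, and the two determinations are incompatible, since the branch at $X$ and the branch at $P$ demand opposite signs. For the second graph both $P$ and $Q$ are bivalent, so the depth-$2$-to-$3$ cells are rigidly constrained; composing the connection around the two-step paths and using $\Gamma = \Gamma'$ over-determines one entry of $W$ and forces it to have modulus strictly greater than $1$, which a unitary cannot. In either case the relation \eqref{eq:S2} satisfied by $S$ (with $n=1$) provides the bridge between the branch dimensions and the sign, so the numerics line up with the connection computation.

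The main obstacle will be the bookkeeping rather than any deep idea: one must enumerate the finitely many cells reaching two steps past the triple point, fix a gauge so that the normalization conditions pin down the connection entries cleanly, and propagate the phase $\omega$ through without sign errors. Once the gauge is chosen this is the promised simple calculation, and the cleanest route is to quote the general statement, for arbitrary supertransitivity and possibly unequal or non-self-dual branches \cite[Lemmas 3.2 and 3.3]{1007.1730}, and specialize it to these two initial segments.
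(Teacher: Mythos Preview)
Your closing sentence---specialize \cite[Lemmas~3.2 and~3.3]{1007.1730}---is exactly what the paper does: the proposition is stated only to illustrate Ocneanu's obstruction, and no argument is given beyond pointing to Haagerup \cite{MR1317352} and to those lemmas.

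The sketch preceding that citation, however, conflates two distinct triple-point obstructions. Ocneanu's argument is a pure connection computation: the $3\times3$ biunitary at the branch, renormalized by the Frobenius--Perron data on $\Gamma$ and on $\Gamma'$, yields an identity between the branch-neighborhood dimensions on the two graphs, and when both graphs carry the same local picture that identity has no unitary solution. No rotational eigenvalue $\omega$, no lowest-weight generator $S$, and no appeal to \eqref{eq:S2} enter. Those ingredients belong to the \emph{other} triple-point obstruction, the one of \cite{math/1007.1158, index5-part2, 1207.5090} discussed in \S\ref{sec:triple-points}, which the paper explicitly says ``has a somewhat different flavor than Ocneanu's'' and which takes a univalent vertex on $\Gamma'$ as a hypothesis rather than deriving a contradiction from symmetry between $\Gamma$ and $\Gamma'$. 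Your proposed mechanisms reflect this mix-up: in the first configuration there is no second $3\times3$ block at $P$ producing a conflicting sign---Ocneanu's computation uses a single biunitary at the original branch, with the depth-two-past structure entering only through the dimension normalizations of the branch vertices---and while ``an entry forced to have modulus exceeding $1$'' is closer in spirit to how unitarity actually fails in the second configuration, \eqref{eq:S2} still plays no role. If you write out the calculation rather than cite it, stay entirely in the connection picture and keep the planar-algebra rotation out of it.
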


This result is typical of the triple point obstruction in that it forces the principal graphs to start assymetrically.  See Lemma 6.4 of \cite{1007.1730} for several other examples of  this obstruction. Much of the power of the odometer comes from the tension between the triple point obstruction which forces principal graphs to start assymetrically and the associativity test which requires a symmetry in the number of paths. 

\subsection{Eliminating weeds}

Applying the odometer as above eventually yields a classification statement with a large number of vines and the five weeds in Figure \ref{fig:weeds}.  In this section we will explain how these remaining weeds can be eliminated on a case-by-case basis.  Three of these weeds start with triple points, while two start with quadruple points.  We will consider these groupings separately.

\begin{figure}
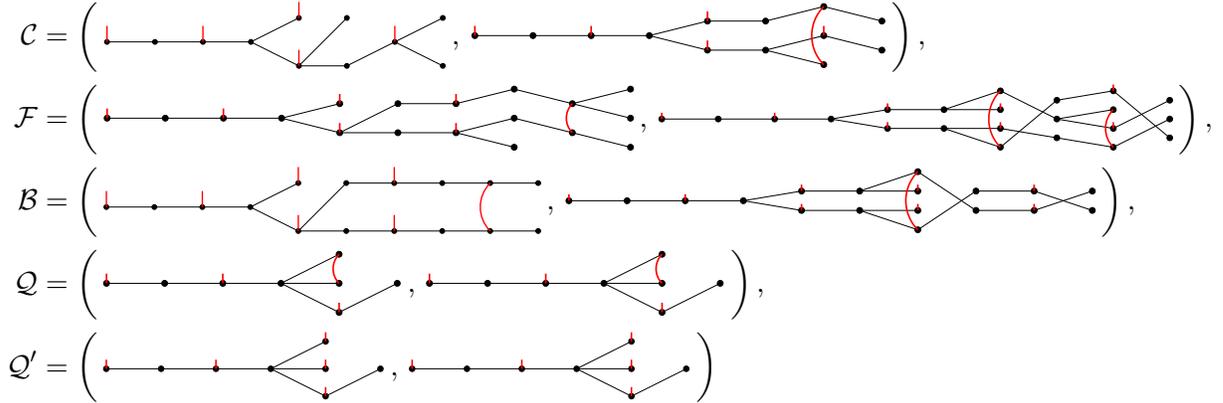

\begin{align*}
\cC &=  \left(\bigraph{bwd1v1v1v1p1v1x0p1x0v1x0v1p1duals1v1v1x2v1}, \bigraph{bwd1v1v1v1p1v1x0p0x1v1x0p1x0p0x1v0x1x0p0x0x1duals1v1v1x2v3x2x1}\right), \displaybreak[1]\\
     \cF &= \left(\bigraph{bwd1v1v1v1p1v1x0p1x0v1x0p0x1v1x0p1x0p0x1v0x1x0p0x0x1v1x0p0x1p0x1duals1v1v1x2v1x2v2x1}, \bigraph{bwd1v1v1v1p1v1x0p0x1v1x0p1x0p0x1p0x1v0x1x0x0p0x0x0x1p1x0x0x0v1x0x0p0x1x0p0x1x0p0x0x1v0x0x0x1p1x0x0x0p0x1x0x0duals1v1v1x2v4x2x3x1v3x2x1x4}\right), \displaybreak[1]\\
      \cB &= \left(\bigraph{bwd1v1v1v1p1v1x0p1x0v1x0p0x1v1x0p0x1v1x0p0x1v1x0p0x1duals1v1v1x2v1x2v2x1}, \bigraph{bwd1v1v1v1p1v1x0p0x1v1x0p1x0p0x1p0x1v0x0x0x1p1x0x0x0v1x0p0x1v0x1p1x0duals1v1v1x2v4x2x3x1v1x2}\right), \displaybreak[1]\\
          \cQ &=\left(\bigraph{bwd1v1v1v1p1p1v1x0x0duals1v1v1x3x2}, \bigraph{bwd1v1v1v1p1p1v1x0x0duals1v1v1x3x2}\right),  \displaybreak[1]\\
              \cQ' &= \left(\bigraph{bwd1v1v1v1p1p1v1x0x0duals1v1v1x2x3}, \bigraph{bwd1v1v1v1p1p1v1x0x0duals1v1v1x2x3}\right)
\end{align*}
\caption{The five weeds which must be eliminated by hand}
\label{fig:weeds}
\end{figure}

\subsubsection{Triple points}
\label{sec:triple-points}
The three weeds beginning with triple points, $\cC$, $\cF$, and $\cB$ were ruled out in \cite{index5-part2}.  Further developments have simplified the argument somewhat, and we sketch these improvements here.

First let us consider the weed $\cB$.  In \cite{index5-part2} this weed was eliminated via a somewhat tricky ad hoc calculation.  However, it turns out that this weed can be  ruled out using an earlier theorem of Popa's.  We would like to thank David Penneys for pointing this out.

\begin{theorem}\cite{MR1334479,1208.1564}
Say that a principal graph pair is \emph{stable at depth $k$} if each vertex at depth $k$ is connected to at most one vertex at depth $k+1$, and each vertex at depth $k+1$ is connected to exactly one vertex at depth $k$.

If an $n$-supertransitive subfactor with index greater than $4$ has principal graph pair which is stable at depth $k_0$ for some $k_0 > n$, then the principal graph is finite and stable at all depths $k \geq k_0$.
\end{theorem}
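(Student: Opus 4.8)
The plan is to stop treating the principal graph pair as a static combinatorial object and instead read the stability hypothesis inside the Jones tower of higher relative commutants, where the pair is exactly the Bratteli diagram recording the new irreducible bimodules at each depth. In this dictionary, \emph{stability at depth $k$} says that the inclusion matrix $\Lambda_k$ passing from depth $k$ to depth $k+1$ is a partial matching: every depth-$(k+1)$ vertex (column) has a single parent, and every depth-$k$ vertex (row) has at most one child. The engine I would use is Popa's principal graph stability \cite{MR1334479}, recovered in planar-algebraic (jellyfish) language in \cite{1208.1564}: once the pair is understood up to a stable depth $k_0$, flatness forces the relative commutant at depth $k_0+1$ to be the Jones basic construction of the previous inclusion, so that $\Lambda_{k_0+1}$ is obtained by reflecting (transposing) $\Lambda_{k_0}$.

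Granting this engine, both conclusions fall out together. Since the transpose of a partial matching is again a partial matching, reflection immediately yields stability at depth $k_0+1$, and induction gives stability at every $k \ge k_0$, so no re-branching can occur. Finiteness comes from the same reflection: the data beyond the stable depth must mirror the finite ``approach'' to the branch point, and because $k_0 > n$ guarantees that everything below depth $k_0$ is controlled (a linear Temperley--Lieb chain up to depth $n$, then a single branch and a finite neck), the mirrored tail has finite length and the chains must terminate. The Haagerup graph is the guiding picture: it is $3$-supertransitive, becomes a matching of two length-two chains from depth $4$, and stops at depth $6$ exactly where the reflection folds back.

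The hard part will be justifying the engine, namely that stability genuinely \emph{forces} the basic construction rather than merely permitting it; equivalently, that no new lowest-weight generator can appear past a stable depth. I expect this to be the crux precisely because it is an analytic, flatness-theoretic statement and not a combinatorial one, so a naive dimension count is powerless. Indeed, a hypothetical continuation such as a dimension-one ``stub'' dangling off an otherwise $A_\infty$-like tail admits a perfectly consistent Frobenius--Perron dimension eigenvector with all entries at least $1$, so it can only be excluded by the connection itself. Thus the substance of the argument is to show that the biunitary connection over the stable neck is rigid enough to propagate by reflection, which is exactly Popa's rigidity and the content that \cite{1208.1564} makes explicit through the graph planar algebra embedding and the jellyfish evaluation.
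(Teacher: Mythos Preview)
The paper does not give its own proof of this theorem; it is quoted as a result from \cite{MR1334479,1208.1564} and then applied to the weed $\cB$. So there is no ``paper's own proof'' to compare against, and your proposal is really a sketch of how those cited proofs go. With that understood, two substantive problems remain.

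First, your argument is circular. The ``engine'' you invoke---that stability at depth $k_0$ forces the next relative commutant to be exactly the basic construction---\emph{is} the theorem, not a lemma toward it. Citing \cite{MR1334479,1208.1564} for this step is citing the result you are meant to prove. You acknowledge this in your last paragraph, but that means the proposal is an outline of where the proof lives, not a proof sketch.

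Second, and more concretely, your finiteness argument is wrong on the mechanism, and this is visible because you never use the hypothesis that the index exceeds $4$. Reflection in the tower does not ``mirror the finite approach to the branch point''; iterated basic construction reflects only the immediately preceding inclusion, which yields periodicity in the Bratteli diagram, not termination of the principal graph. If your reflection picture were correct, it would equally ``prove'' finiteness at index $4$, where $A_\infty$ is a counterexample: it is stable at every depth and infinite. The genuine role of $[M:N]>4$ is that along a stable tail the Frobenius--Perron dimensions satisfy the recursion $d_{k+1}=\delta\,d_k-d_{k-1}$ with $\delta>2$, and it is this growth (together with the flatness/jellyfish machinery of \cite{1208.1564}) that obstructs an infinite stable continuation. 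Your sketch needs to locate and use that inequality.
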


Applying this to the weed $\cB$ we see that any extension must end with $A_n$ tails. By Lemma 4.14 of \cite{index5-part2} these must in fact be $A_\infty$ tails, but this is impossible by results of Popa \cite{MR1356624} or Peters \cite[Theorem 6.5]{MR2679382}.

The remaining two weeds, $\cC$ and $\cF$, are ruled out using a triple point obstruction.  For convenience, we will state only the most general form of this triple point obstruction as given in \cite{1207.5090}, but for the argument it suffices to use earlier slightly weaker triple point obstructions \cite{math/1007.1158, index5-part2}.  (In fact, for the classification up to index $5$ it is only necessary to use the earliest of these results \cite{math/1007.1158}.)  In order to state this triple point obstruction we fix some notation.  Suppose that we have a subfactor planar algebra which is $n-1$ supertransitive with principal and dual principal graphs $\Gamma$ and $\Gamma'$, both beginning with a triple point.  Let $[k]$ denote the quantum number $(\nu^k - \nu^{-k})/(\nu-\nu^{-1})$ where $\nu$ is a number such that the index is $[2]^2$.  Let $r$ be the ratio of the two dimensions at depth $n$ in $\Gamma$.

\begin{theorem} \cite{1207.5090}
Suppose that one of the vertices at depth $n$ on $\Gamma'$ is $1$-valent, then
$$r+\frac{1}{r} = \frac{\lambda+\lambda^{-1}+2}{[n][n+2]}+2,$$
where $\lambda$ is an $n$-th root of unity.
\end{theorem}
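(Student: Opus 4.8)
The plan is to isolate a single generator at the branch point and to read off both the dimension ratio $r$ and the rotational eigenvalue $\lambda$ from it. By $(n-1)$-supertransitivity together with the triple point, the annular multiplicity analysis shows that the lowest box space carrying a vector orthogonal to Temperley--Lieb is $P_{n,+}$, and that the lowest weight space there is one dimensional; let $S$ span it, chosen as a rotational eigenvector. Since $S$ has lowest weight $n$, the annular classification theorem quoted above forces its rotation eigenvalue $\lambda$ to satisfy $\lambda^n=1$, which already accounts for the assertion that $\lambda$ is an $n$-th root of unity; the content of the theorem is how $\lambda$ controls $r$. First I would record the local structure at the branch point: the two minimal projections just past it sum to a Jones--Wenzl idempotent and are each a linear combination of that idempotent and a capping of $S$, so that their traces --- which are exactly the two dimensions at depth $n$, with ratio $r$ --- are encoded in the quadratic relation \eqref{eq:S2}.

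The heart of the argument is the quadratic-tangle computation described above: form $S\circ S$, the $(n+1)$-box obtained by joining two copies of $S$ along $n-1$ strings. A dimension count via the annular multiplicities shows that $S\circ S$ must lie in the span of Temperley--Lieb and the annular consequences of $S$, and the dual-basis machinery gives its explicit expansion there, with coefficients that involve the rotation phase through $\lambda+\lambda^{-1}$ (arising from pairing $S$ against its rotations $\rho^{k}S=\lambda^{k}S$ and their adjoints). I would then compare this expansion against the one obtained by reducing $S\circ S$ using the local relations, including \eqref{eq:S2}, whose coefficients involve only $r$ and the quantum numbers $[n],[n+1],[n+2]$. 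Matching a single well-chosen coefficient (or, equivalently, computing one inner product involving $S\circ S$ and a fixed annular consequence in the two ways) produces one identity binding $r$, $\lambda$, and the $[k]$'s.

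The univalence hypothesis is exactly what turns this identity into the clean closed form rather than an inequality. Using the embedding $\cP \hookrightarrow \cG(\Gamma)$ into the graph planar algebra, a $1$-valent vertex at depth $n$ on the dual graph $\Gamma'$ forces the dual reading of $S$ to be supported on a minimal family of loops through that vertex, which fixes or kills the otherwise free off-diagonal annular coefficients and collapses the rotation data to the single phase $\lambda$. Feeding this in and simplifying with the quantum-number identity $[n+1]^2-[n][n+2]=1$ should yield exactly $r+r^{-1}=(\lambda+\lambda^{-1}+2)/([n][n+2])+2$. I expect the genuine obstacle to be the bookkeeping in the quadratic-tangle step: assembling the correct dual basis of annular consequences and tracking the rotational phases carefully enough that the two evaluations can be matched term by term. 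The univalence condition is precisely the ingredient that removes the remaining freedom, so that $\lambda$ enters the final relation only through the symmetric combination $\lambda+\lambda^{-1}$.
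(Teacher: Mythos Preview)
Your proposal follows the quadratic-tangles route of \cite{math/1007.1158}, but the paper explicitly notes that this theorem ``is proved using a mix of connections and planar algebras, following an argument in \cite{index5-part3},'' and that it \emph{relaxes} the hypotheses needed for the purely quadratic-tangles identity. In the actual argument one works with the biunitary connection at the initial branch: the $1$-valent vertex at depth $n$ on $\Gamma'$ forces a row or column of the relevant $3\times 3$ unitary block to have a very specific form, which pins down the branch coefficients (and hence $r$) directly. The planar-algebra input is then only the identification, from \cite{index5-part3}, of the rotation eigenvalue $\lambda$ with a phase readable from the connection in an appropriate gauge. No $S\circ S$ expansion is needed.

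The gap in your outline is the step ``a dimension count via the annular multiplicities shows that $S\circ S$ must lie in the span of Temperley--Lieb and the annular consequences of $S$.'' That containment is equivalent to the annular multiplicity at weight $n+1$ vanishing (the $*10$ situation), and a single $1$-valent vertex on $\Gamma'$ does not force this; the graph $\Gamma$ may well produce new lowest-weight vectors at depth $n+1$. Without that vanishing, the quadratic-tangles computation only yields an inequality (the connections-only inequality of \cite{index5-part2}), not the exact identity. Your attempt to rescue this via the graph planar algebra embedding --- arguing that the $1$-valent vertex on $\Gamma'$ constrains the ``dual reading of $S$'' inside $\cG(\Gamma)$ --- conflates the two graphs: the embedding is into $\cG(\Gamma)$, where loops see the structure of $\Gamma$, and the univalence hypothesis lives on $\Gamma'$. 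Translating that hypothesis into a concrete constraint on the image of $S$ in $\cG(\Gamma)$ requires exactly the connection data linking $\Gamma$ and $\Gamma'$, which is what the paper's approach exploits directly. So while the quadratic-tangles philosophy is sound and reproduces the weaker \cite{math/1007.1158} result, the specific mechanism you propose for upgrading it under the stated hypothesis does not go through as written; the connections-based argument is both what the paper uses and the natural home for the $1$-valence condition.
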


The number $\lambda$ is an eigenvalue for the action of rotation in the planar algebra.  This theorem is proved using a mix of connections and planar algebras, following an argument in \cite{index5-part3} and relaxes some of the conditions from the earlier slightly weaker theorem in \cite{math/1007.1158}.

Since $\lambda$ is a root of unity, we know that $-2 \leq \lambda + \lambda^{-1} \leq 2$.  Hence this triple point obstruction implies a certain inequality in which $\lambda$ does not appear.  This inequality is quite useful on its own, and it can be proved directly using only connections as in \cite{index5-part2}.

This triple point obstruction has a somewhat different flavor than Ocneanu's.  It only applies in the specific situation where one of the vertices on $\Gamma'$ is $1$-valent, but when it does apply it gives a strong restriction.  Furthermore, applying this triple point obstruction is more subtle, because you need to know the supertransitivity, the index, and the value of $r$.  This makes it somewhat surprising that it can be applied to weeds at all, because weeds have arbitrary supertransitivity, and even if you fix the supertransitivity you typically can't work out either the index or the value of $r$.  However, in the cases of $\cC$ and $\cF$ we have a slight simplification: if you fix the index and the supertransitivity then you can work out $r$ explicitly.  This calculation of $r$ is somewhat delicate and requires using the dual data.  Once you work out $r$, the inequality rules out all but finitely many supertransitivities, and the full version rules out the remaining few examples.

\subsubsection{Quadruple points}
\label{sec:quadruple}
In \cite{index5-part3}, we showed that the only subfactors with principal graph represented by either of the weeds $\mathcal{Q}$ or $\mathcal{Q}'$ are the GHJ 3311 subfactor and its dual. There, we analyzed possible connections on a graph whose branch point is a quadruple point adjacent to two univalent vertices. We were able to prove that the dimensions of the two other vertices adjacent to this quadruple point must be equal, and as an easy corollary the index on an $n$-supertransitive extension of either $\mathcal{Q}$ or $\mathcal{Q}'$ is equal to the index of the $nn11$ graph. A slight elaboration of the techniques of \cite{1004.0665} (and see below) shows that there is a number theoretic obstruction eliminating all supertransitivies except  $n =2, 3$ or $4$. The even cases are not relevant, and a straightforward graph enumeration argument shows that the only $3$-supertransitive extension of $\mathcal{Q}$ or $\mathcal{Q}'$ with index $3+\sqrt{3}$ (the index of $3311$) is the $3311$ principal graph itself.  As explained above in \S \ref{sec:3311}, this subfactor is unique up to duality.

\subsection{Eliminating vines}
\label{sec:vines}

Now that we have eliminated all the weeds we need to consider the long list of vines remaining.  In Haagerup's classification, each of the vines was eliminated using a different technique.  Bisch used a fusion rules obstruction, Haagerup a triple point obstruction, and Asaeda-Yasuda number theory.  Rather than ad hoc techniques applied to each vine individually, it turns out that there is a number theoretic argument which eliminates all but finitely many graphs coming from a given vine in a uniform manner.  Indeed, number theory remains the only known technique for ruling out several of the vines (including the Haagerup vine).

The appearance of number theory is somewhat unexpected and mysterious.  The first application of number theory to subfactors is that all dimensions of bimodules occuring in finite depth subfactors must be algebraic integers (as they are eigenvalues of integer matrices).  This is already enough to eliminate certain graphs like $D_5$. Indeed  along with a result of Kronecker's \cite{an:053.1389cj} it gives a quick proof that finite depth subfactors of index less than $4$ must have index $4\cos^2\pi/n$.  Namely, if the index is $(q+q^{-1})^2$ then $q$ must be an algebraic integer all of whose Galois conjugates have size $1$, which by Kronecker's theorem forces $q$ to be a root of unity and the index to be $4\cos^2\pi/n$.  In addition to the condition that indices of finite depth subfactors are algebraic integers, several more sophisticated number theoretic techniques have been introduced.

\subsubsection{Cyclotomicity}
De Boer and Goeree \cite{MR1120140} first observed that certain invariants of conformal field theories always lie in a cyclotomic field.  Coste and Gannon \cite{MR1266785} clarified this result, showing that the entries of the $S$ matrix for any modular tensor category lie in a cyclotomic field.  These results are not immediately applicable to subfactors, because the tensor categories which appear in subfactor theory are typically not modular.  Nonetheless, from any spherical fusion category you can produce a modular category by taking the Drinfel'd center.  Etingof-Nikshych-Ostrik \cite{MR2183279} showed that the Coste-Gannon result for $Z(\cC)$ implies that the dimensions of objects in $\cC$ are themselves cyclotomic.  This immediately shows that the dimensions of all even depth objects in finite depth subfactors are cyclotomic integers, and in particular the index is too. Moreover, this is true not just for the Frobenius-Perron dimensions, but for any dimension function. Every multiplicity free eigenvector of the adjacency matrix of the principal graph (normalized to give 1 on the trivial bimodule) gives a dimension function. 

The key step in Coste and Gannon's argument is to prove that the entries of the $S$ matrix lie in an abelian extension of $\mathbb{Q}$, because the Galois group can be thought of  either as permuting the rows or as permuting the columns and thus must be commutative.  Then Kronecker-Weber \cite{an:053.1389cj,MR1554698}
 tells you that these numbers lie in a cyclotomic field.  This argument, although beautiful, is not very constructive.  Unpublished work of Siu-Hung Ng gives another proof that the dimensions of objects in a modular tensor category lie in a cyclotomic field, and his construction is more explicit.  The dimensions are traces of the action of rotation on a certain space, and thus must be sums of roots of unity.

Asaeda was the first to apply cyclotomicity to subfactors. She proved in \cite{MR2307421} that the $n$-supertransitive translates of the Haagerup and extended Haagerup principal graphs did not have cyclotomic index for $7<n\leq55$. Subsequently  
Asaeda and Yasuda \cite{MR2472028} showed that none of the graphs
\begin{align*}
\left(\begin{tikzpicture}[baseline,scale=0.75]
\draw[fill] (0,0) circle (0.05);
\draw (0.,0.) -- (1.,0.);
\draw[fill] (1.,0.) circle (0.05);
\draw[dashed] (1.,0.) -- (2.,0.);
\draw[fill] (2.,0.) circle (0.05);
\draw (2.,0.) -- (3.,0.);
\draw[fill] (3.,0.) circle (0.05);
\draw (3.,0.) -- (4.,-0.5);
\draw (3.,0.) -- (4.,0.5);
\draw[fill] (4.,-0.5) circle (0.05);
\draw[fill] (4.,0.5) circle (0.05);
\draw (4.,-0.5) -- (5.,-0.5);
\draw (4.,0.5) -- (5.,0.5);
\draw[fill] (5.,-0.5) circle (0.05);
\draw[fill] (5.,0.5) circle (0.05);
\draw (5.,-0.5) -- (6.,-0.5);
\draw (5.,0.5) -- (6.,0.5);
\draw[fill] (6.,-0.5) circle (0.05);
\draw[fill] (6.,0.5) circle (0.05);
\draw[red, thick] (0.,0.) -- +(0,0.333333) ;
\draw[red, thick] (2.,0.) -- +(0,0.333333) ;
\draw[red, thick] (4.,-0.5) -- +(0,0.333333) ;
\draw[red, thick] (4.,0.5) -- +(0,0.333333) ;
\draw[red, thick] (6.,-0.5) to[out=135,in=-135] (6.,0.5);
\end{tikzpicture}\;\right.,&\quad\left.
\begin{tikzpicture}[baseline,scale=0.75]
\draw[fill] (0,0) circle (0.05);
\draw (0.,0.) -- (1.,0.);
\draw[fill] (1.,0.) circle (0.05);
\draw[dashed] (1.,0.) -- (2.,0.);
\draw[fill] (2.,0.) circle (0.05);
\draw (2.,0.) -- (3.,0.);
\draw[fill] (3.,0.) circle (0.05);
\draw (3.,0.) -- (4.,-0.5);
\draw (3.,0.) -- (4.,0.5);
\draw[fill] (4.,-0.5) circle (0.05);
\draw[fill] (4.,0.5) circle (0.05);
\draw (4.,-0.5) -- (5.,-0.5);
\draw (4.,-0.5) -- (5.,0.5);
\draw[fill] (5.,-0.5) circle (0.05);
\draw[fill] (5.,0.5) circle (0.05);
\draw[red, thick] (0.,0.) -- +(0,0.333333) ;
\draw[red, thick] (2.,0.) -- +(0,0.333333) ;
\draw[red, thick] (4.,-0.5) -- +(0,0.333333) ;
\draw[red, thick] (4.,0.5) -- +(0,0.333333) ;
\end{tikzpicture}\;
\right)
\end{align*}
have cyclotomic index except when the supertransitivity is $3$ or $7$, and hence these graphs cannot be the principal graph of a subfactor.

Subsequently, using rather different methods, Calegari, Morrison, and Snyder \cite{1004.0665} gave the following two results:
\begin{theorem}[Theorems 1.0.3 and 1.0.6 from \cite{1004.0665}] \label{theorem:graphs}
Let $\Gamma$ be a connected
graph with $|\Gamma|$ vertices. Fix a vertex $v$ of $\Gamma$, and
let $\Gamma_n$ denote the  sequence of graphs obtained by adding a $2$-valent tree
of length $n - |\Gamma|$ to $\Gamma$ at $v$ (see Figure \ref{fig:gamma-family}).
There exists an effective constant $N_1(\Gamma)$ such that for all $n \ge N_1(\Gamma)$, either:
\begin{enumerate}
\item  all the eigenvalues of the adjacency matrix $M_n$ are of the form
$\zeta + \zeta^{-1}$ for some root of unity $\zeta$, and the graphs $\Gamma_n$
are the  Dynkin diagrams $A_n$ or $D_n$, or
\item  the largest eigenvalue $\lambda$ of the adjacency matrix $M_n$ is greater than $2$,
and the field $\mathbb{Q}(\lambda^2)$ is not abelian. Thus $\Gamma_n$ is not the principal graph of a subfactor.
\end{enumerate}
\end{theorem}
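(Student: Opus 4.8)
The plan is to reduce the spectral question to a single transfer-matrix recursion along the growing tail, and then feed the resulting algebraic number into the cyclotomicity machinery. First I would record that attaching a path (a $2$-valent tail) at $v$ changes the characteristic polynomial $P_n(x) = \det(xI - M_n)$ by the Chebyshev-type recursion $P_n(x) = x P_{n-1}(x) - P_{n-2}(x)$, obtained by expanding the determinant along the degree-one endpoint of the tail. Substituting $x = z + z^{-1}$ and solving the recursion gives a closed form $P_n(x) = A(z) z^{n} + A(z^{-1}) z^{-n}$, where $A$ is a fixed Laurent polynomial in $z$ depending only on the pair $(\Gamma, v)$ and not on $n$; equivalently, the eigenvalues of $M_n$ are exactly the numbers $z + z^{-1}$ with $z^{2n} = -A(z^{-1})/A(z)$. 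Two features are then immediate: the roots $z$ on the unit circle contribute eigenvalues in $[-2,2]$, while roots off the unit circle are confined near the (finitely many) zeros of $A$, so there are only $O(1)$ eigenvalues outside $[-2,2]$ no matter how large $n$ is.

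Next I would set up the dichotomy using Perron-Frobenius and monotonicity. Since $\Gamma_n$ is connected, the top eigenvalue $\lambda = \lambda_n$ is simple, strictly increasing in $n$, and converges to the norm $\lambda_\infty$ of the graph obtained by attaching an infinite ray; here $\lambda_\infty = z_* + z_*^{-1}$, where $z_*$ is the largest-modulus zero of $A$. If $\lambda_\infty \le 2$, then $\lambda_n < 2$ for every finite $n$, so $\Gamma_n$ has norm below $2$, and the classification of graphs of norm $\le 2$ forces $\Gamma_n \in \{A_n, D_n\}$ once $n$ is large (the $E$-type and affine graphs are of bounded size, and the norm-$2$ affine graphs are excluded by the strict inequality). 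In this regime all conjugates of every eigenvalue are totally real and lie in $[-2,2]$ (the eigenvalues of the symmetric integer matrix $M_n$ are real, and Galois conjugation permutes them), so by Kronecker's theorem \cite{an:053.1389cj} each eigenvalue has the form $\zeta + \zeta^{-1}$ for a root of unity $\zeta$. This is conclusion (1).

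The substantive case is $\lambda_\infty > 2$, where $\lambda_n > 2$ for all large $n$ and I must show $\mathbb{Q}(\lambda_n^2)$ is not abelian. Here the crucial structural facts, both uniform in $n$, are that $\lambda_n$ is a \emph{totally real} algebraic integer (all conjugates real, being eigenvalues of $M_n$) with house bounded by the fixed constant $\lambda_\infty$, and that its conjugate profile is rigid: only $O(1)$ conjugates exceed $2$ in absolute value (those controlled by the finitely many off-circle zeros of $A$), while the remaining conjugates, roughly $2n$ of them, are packed into $(-2,2)$ and equidistribute as $n \to \infty$. Assuming for contradiction that $\mathbb{Q}(\lambda_n^2)$ is abelian makes $\lambda_n^2$ a totally real cyclotomic integer whose house lies in the fixed window $(4, \lambda_\infty^2]$. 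I would then invoke the quantitative classification of cyclotomic integers of small house from \cite{1004.0665} (the refinement of the $[-2,2]$ Kronecker bound that is the number-theoretic heart of that paper): such integers with house confined to a window just above $4$ are too constrained to realize the profile above, since the window cannot accommodate the linearly growing minimal-polynomial degree forced by the roughly $2n$ interior conjugates. Making this quantitative --- tracking the degree and coefficients of $A$, the separation of $z_*$ from the unit circle, and the exponentially small perturbation $z_0 - z_*$ governing $\lambda_n - \lambda_\infty$ --- produces the effective threshold $N_1(\Gamma)$. The main obstacle is exactly this step: turning ``abelian, totally real, house just above $2$, unbounded degree'' into a contradiction requires the full strength of the cyclotomic-integer estimates rather than the soft Kronecker statement, and extracting an effective constant is delicate. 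Finally, the subfactor conclusion is immediate from the cyclotomicity results cited earlier (Coste-Gannon \cite{MR1266785} and Etingof-Nikshych-Ostrik \cite{MR2183279}): the index of a finite-depth subfactor is a cyclotomic integer, so $\mathbb{Q}(\lambda_n^2)$ would have to be abelian, and its failure to be so rules out $\Gamma_n$ as a principal graph.
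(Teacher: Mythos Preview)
Your overall architecture matches the paper's: the Chebyshev recursion along the tail, the substitution $x=z+z^{-1}$, the observation that only $O(1)$ eigenvalues escape $[-2,2]$, and the Perron--Frobenius dichotomy are all correct, and the $\lambda_\infty \le 2$ case via Kronecker is fine. But the heart of the matter---the contradiction when $\lambda_\infty > 2$---is not what you describe.

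You claim that totally real cyclotomic integers with house in $(4, \lambda_\infty^2]$ ``cannot accommodate the linearly growing minimal-polynomial degree.'' This is false as stated: there is no obstruction to a totally real cyclotomic integer having bounded house and unbounded degree---think of $2+2\cos(2\pi/m)$, or indeed any fixed integer translate of $\zeta+\zeta^{-1}$. House alone never bounds degree, so this step is a genuine gap rather than a detail to be filled in.

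What the paper actually invokes (following Cassels and Loxton \cite{MR0246852, MR0309896}) is not the house but the \emph{normalized $L^2$ average} of the conjugates, the height $\mathcal{M}(\beta) = [\mathbb{Q}(\beta):\mathbb{Q}]^{-1}\sum_\sigma |\sigma\beta|^2$. Your own spectral analysis---all but $O(1)$ eigenvalues in $[-2,2]$, with the exceptional ones uniformly bounded---is exactly what makes $\mathcal{M}$ of (a suitable translate of) $\lambda_n^2$ uniformly small as $n\to\infty$. The Cassels--Loxton theorem then forces such a cyclotomic integer to be a sum of a \emph{bounded} number of roots of unity. It is this structural conclusion, not a degree bound, that is fed back against the recursion to obtain the contradiction and the effective constant $N_1(\Gamma)$. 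So your reduction to the spectrum and your instinct that this is ``the main obstacle'' are both correct, but the invariant is $\mathcal{M}$ rather than the house, and the mechanism is ``few roots of unity'' rather than ``small degree.''
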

\begin{figure}[!ht]
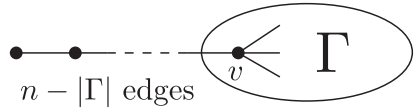
%
$$\mathfig{0.4}{Gamma_family}$$
\caption{The family of graphs $\Gamma_n$.}
\label{fig:gamma-family}
\end{figure}
The effective constant here is actually rather hard to compute, and tends to be very large!  As a result, for applications we use the following theorem where the effective constant is smaller and easier to compute.

\begin{theorem}[Theorem 10.0.1 from \cite{1004.0665}]
\label{theorem:graphs2}
With the same hypotheses and notation as in the previous theorem, there is an effective constant $N(\Gamma)$ such that for all $n \ge N$, either
\begin{enumerate}
\item the graphs $\Gamma_n$ are the Dynkin diagrams $A_n$ or $D_n$, or 
\item  there exists at least one eigenvalue
$\lambda$ of $M_n$ of multiplicity one such
that  $\mathbb{Q}(\lambda^2)$ is not abelian, and hence $\Gamma_n$ is not the principal graph of a subfactor.
\end{enumerate}
\end{theorem}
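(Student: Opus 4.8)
The plan is to treat both conclusions as consequences of a single structural analysis of the spectrum of $M_n$ as the tail grows, and only at the very end to feed in the subfactor hypothesis through cyclotomicity. Writing $\lambda = q + q^{-1}$ and letting $m = n - |\Gamma|$ be the length of the attached tail, I would first record the standard pendant-path recursion
\[
\phi_{\Gamma_n}(\lambda) = \phi_\Gamma(\lambda)\, U_m(\lambda/2) - \phi_{\Gamma \setminus v}(\lambda)\, U_{m-1}(\lambda/2),
\]
where $U_k$ is the Chebyshev polynomial of the second kind and $\Gamma \setminus v$ is $\Gamma$ with $v$ deleted. Substituting $U_k(\lambda/2) = (q^{k+1} - q^{-k-1})/(q - q^{-1})$ and collecting powers of $q^m$ rewrites this, up to the factor $(q - q^{-1})$, as $q^m A(q) - q^{-m} A(q^{-1})$ for a single fixed Laurent polynomial $A(q) = q\,\phi_\Gamma(q+q^{-1}) - \phi_{\Gamma\setminus v}(q+q^{-1})$ depending only on $(\Gamma, v)$. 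Thus the eigenvalues of $M_n$ with $|\lambda| > 2$ correspond to the solutions of $q^{2m} = A(q^{-1})/A(q)$ with $|q| > 1$, and this associated polynomial $A(q)$ is the invariant that governs the whole family.

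The dichotomy now comes from the location of the roots of $A$. As $m \to \infty$ the equation $q^{2m} = A(q^{-1})/A(q)$ forces any solution with $|q| > 1$ to approach a root $q_0$ of $A$ lying outside the unit circle, so the eigenvalues above $2$ accumulate at the finite set $\{q_0 + q_0^{-1}\}$, while the eigenvalues in $[-2,2]$ are of the form $2\cos\theta$ and are automatically cyclotomic. I would show that if every root of $A$ is a root of unity then the only graphs that can occur are the Dynkin diagrams $A_n$ and $D_n$ (these are exactly the tail families whose spectra lie entirely in $\{\zeta + \zeta^{-1}\}$), which is conclusion (1). Otherwise $A$ has a root $q_0$ with $|q_0| > 1$ that is not a root of unity, and for $n$ large the corresponding eigenvalue $\lambda_n \to q_0 + q_0^{-1} > 2$ is isolated; a perturbation and interlacing estimate shows $\lambda_n$ has multiplicity one once $m$ exceeds an explicit bound. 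This is the simple eigenvalue that conclusion (2) will use.

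It remains to show $\mathbb{Q}(\lambda_n^2)$ is not abelian, and this is where the argument departs from the largest-eigenvalue form of Theorem \ref{theorem:graphs}. Rather than having to pin down the Perron--Frobenius eigenvalue (which is what makes the constant $N_1(\Gamma)$ enormous), I would exploit that $\lambda_n$ is a root of the integer polynomial $\phi_{\Gamma_n}$ and analyze its Galois conjugates directly via $q^{2m} = A(q^{-1})/A(q)$. On $|q| = 1$ the right-hand side is unimodular (as $A$ has real coefficients and $q^{-1} = \bar q$ there), so most conjugates of $\lambda_n$ lie in $[-2,2]$, while the distinguished conjugate near $q_0$ does not; a Kronecker and Coste--Gannon style argument then shows that an abelian $\mathbb{Q}(\lambda_n^2)$ would force $q_0$ to be a root of unity, contradicting the choice of $q_0$. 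Finally, cyclotomicity (as recalled before Theorem \ref{theorem:graphs}) says that a multiplicity-one eigenvalue of a principal graph yields a dimension function valued in a cyclotomic field, so $\mathbb{Q}(\lambda_n^2)$ would be abelian; hence $\Gamma_n$ cannot be a principal graph, giving conclusion (2).

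The main obstacle is making this Galois step effective and uniform in $n$. The non-abelianness must be deduced not for the single limiting number $q_0 + q_0^{-1}$ but for the moving eigenvalue $\lambda_n$, whose minimal polynomial varies with $n$; one has to bound, independently of $n$, how the Galois closure of $\mathbb{Q}(\lambda_n)$ sits relative to cyclotomic fields, and to guarantee simultaneously that $\lambda_n$ stays simple and bounded away from $[-2,2]$. Converting the qualitative statement ``$q_0$ is not a root of unity'' into a computable threshold $N(\Gamma)$ — small enough to be usable in the weed and vine eliminations — is the delicate part, and is precisely why the theorem is phrased with an \emph{effective} constant that is smaller and more tractable than the $N_1(\Gamma)$ of the previous theorem.
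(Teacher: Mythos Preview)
Your structural setup --- the pendant-path recursion, the substitution $\lambda = q + q^{-1}$, the fixed Laurent polynomial $A(q)$, and the observation that the large eigenvalues accumulate at $\{q_0 + q_0^{-1} : A(q_0)=0,\ |q_0|>1\}$ --- is correct and is indeed close to how the Gross--Hironaka--McMullen input enters the actual argument (it is used to control which eigenvalues of $M_n$ are of the form $\zeta + \zeta^{-1}$, i.e.\ to strip off the cyclotomic factors of $\phi_{\Gamma_n}$ and isolate a simple non-cyclotomic eigenvalue).

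The genuine gap is the Galois step. The sentence ``a Kronecker and Coste--Gannon style argument then shows that an abelian $\mathbb{Q}(\lambda_n^2)$ would force $q_0$ to be a root of unity'' is not an argument. Even if every $\lambda_n^2$ were cyclotomic, these numbers would lie in cyclotomic fields $\mathbb{Q}(\zeta_{m_n})$ whose conductors $m_n$ can grow with $n$, and a limit of cyclotomic integers of unbounded degree can be any real number --- nothing about $q_0$ follows. Kronecker's theorem needs \emph{all} conjugates on the unit circle, which is exactly what fails once one eigenvalue exceeds $2$; Coste--Gannon concerns $S$-matrices and gives no bound of this shape. The observation that ``most conjugates of $\lambda_n$ lie in $[-2,2]$'' is true but is perfectly compatible with $\lambda_n^2$ being cyclotomic; indeed for $A_n$ and $D_n$ \emph{all} eigenvalues have this form.

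The proof in the cited source, as this survey summarizes, supplies the missing mechanism by an entirely different route: the Cassels--Loxton height $\mathcal{M}(\beta)$, essentially the mean of $|\sigma\beta|^2$ over Galois conjugates. If $\lambda_n^2$ were a cyclotomic integer, one bounds $\mathcal{M}(\lambda_n^2)$ uniformly in $n$ (traces of small powers of $M_n$ grow only linearly with $n$, as does the degree), and Cassels and Loxton showed that a cyclotomic integer with small $\mathcal{M}$ is a sum of a bounded number of roots of unity. That turns the problem into a finite enumeration and is exactly what makes $N(\Gamma)$ effective and, in practice, small. Your recursion and $A(q)$ are useful for the bookkeeping of which eigenvalues are already of the form $\zeta+\zeta^{-1}$, but they cannot replace this height argument.
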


These arguments use results of Cassels and Loxton \cite{MR0246852, MR0309896}, who consider a height function $\mathcal{M}$ on cyclotomic integers and show that any cyclotomic integer with small height must be a sum of a small number of roots of unity.  The second theorem also uses a result of Gross, Hironaka, and McMullen \cite{MR2516970} to get an explicit handle on eigenvalues of the adjacency matrix of the form $\zeta+\zeta^{-1}$ where $\zeta$ is a root of unity.

In this second theorem, the constant $N(\Gamma)$ is relatively easily computable, and tends to be small. In \cite{index5-part4} Penneys and Tener gave algorithms to compute (upper bounds for) $N$, and used this theorem to reduce all the vines discussed in \S \ref{sec:odometer} to finitely many cases. The highest $N$ they have to consider is 333. Checking the finitely many cases below these bounds takes some more work. In fact, the vast majority still don't have cyclotomic index, suggesting that the bounds in Theorem \ref{theorem:graphs2} tend to be far from sharp. To show that individual indexes are not cyclotomic, it suffices to find the minimal polynomial, then identify a prime so that the factorization of the minimal polynomial module that prime has factors with different degrees. Computing the minimal polynomials eventually becomes computationally intensive, but they  found a shortcut: the ratio of the minimal polynomial to the characteristic polynomial of the square of the adjacency matrix has only a few prime factors, and this ratio tends to be periodic in $n$ with quite small period. This observation allows them to guess and then verify minimal polynomials for each graph in the finite family $\{\Gamma_n\}_{n < N(\Gamma)}$.

Finally, the authors of \cite{index5-part4} find the seven graphs coming from the Haagerup, extended Haagerup, Asaeda-Haagerup, and Izumi-Xu subfactors along with another 15 graphs which they eliminate using the obstruction of the next section. (The graphs coming from the Goodman-de la Harpe-Jones 3311 subfactor had arisen from a weed with a quadruple point, and were dealt with in \cite{index5-part3}, described in \S \ref{sec:quadruple} above, so didn't appear in their treatment of vines.)

Using number theory alone \cite{1004.0665} also gives an independent proof that any finite depth subfactor of index less than $4+\frac{10}{33}$ has index $3+2 \cos (\frac{2\pi}{7})$ or $\frac{5+\sqrt{13}}{2}$.  As we know from the main classification theorem, the assumption of finite depth is unnecessary and the former index is impossible, but nonetheless it's interesting to know that number theory alone gives such strong restrictions.  This proof uses Cassels-Loxton techniques, but requires considerable care (and some additional ideas based on work of A.J. Jones and Conway \cite{MR0224587, MR0422149}).  Furthermore, the same technique gives even stronger results restricting dimensions of objects in fusion categories.

\subsubsection{Ostrik's d-numbers}
Another number theoretic restriction on candidate subfactors was given by Ostrik in \cite{0810.3242}. A d-number is an algebraic integer which generates an ideal in the ring of algebraic integers invariant under the action of the absolute Galois group.
Ostrik showed that every formal codegree of a fusion category was a d-number. A formal codegree for a fusion category $\cC$ is the quantity $\sum_{V \in \operatorname{Irr}(\cC)} ||d(V)||^2$ for some simple representation $d$ of the fusion ring (the 1-dimensional representations are called dimension functions, but we can also talk about formal codegrees for other simple representations when the fusion ring is non-abelian). In particular, the formal codegree coming from the Frobenius-Perron dimension function is exactly the global dimension of the fusion category. In the subfactor literature, the global dimension of the fusion category of $N-N$ bimodules of a finite depth subfactor is often called the global index, because it is the index of the associated asymptotic inclusion \cite{MR996454}. 

Ostrik also gave a very explicit characterization: an algebraic integer $z$ with minimal polynomial $\sum_{i=0}^n a_{n-i} x^i$ and $a_0=1$ is a d-number if and only if $a_i^n$ is divisible by $a_n^i$ for each $i$. This immediately gives an obstruction to certain graphs being principal graphs of subfactors. Penneys and Tener used this to rule out the last 15 candidate principal graphs with index below $5$.

\section{Index 5 and beyond}
We begin with Figure \ref{fig:MapOfSubfactors} which summarizes the known small-index subfactors, the regions in which we have classification results, and indicates some candidate principal graphs which may be realized by currently unknown subfactors.

\begin{figure}[!ht]
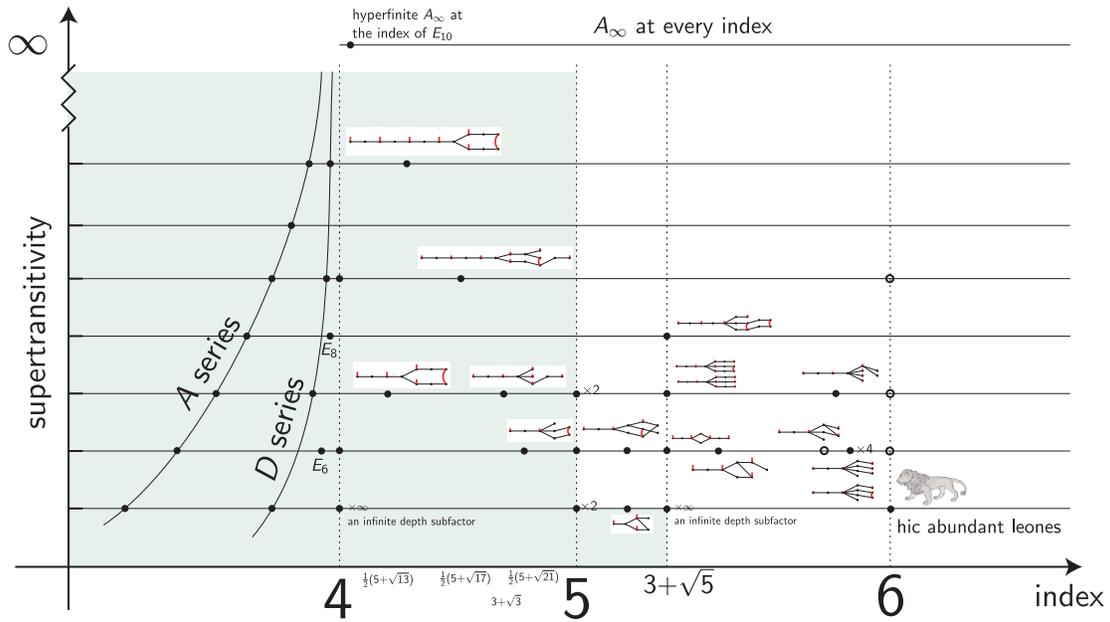

$$\mathfig{1}{MapOfSubfactors-monochrome}$$
\caption{The map of low index subfactors. In the shaded regions we have classification results. Filled dots show known subfactors. Open dots indicate candidate principal graphs, but are not exhaustive.}
\label{fig:MapOfSubfactors}
\end{figure}

\subsection{Index exactly 5}
At index exactly 5 there are seven subfactor planar algebras, all of which are group-subgroup subfactors or duals of group-subgroup subfactors.   (See \cite{MR1738515}, and \cite{MR1920326} for when two group-subgroup subfactors are equivalent.)  The group-subgroup subfactors at index $5$ are
\begin{align*}
1 \subset \Integer/5\Integer & \qquad \left(\bigraph{bwd1v1p1p1p1duals1v2x1x4x3}, \bigraph{bwd1v1p1p1p1duals1v2x1x4x3}\right) \\
\Integer/2\Integer \subset D_{10} & \qquad \left(\bigraph{bwd1v1p1v1x1v1duals1v1x2v1}, \bigraph{bwd1v1p1v1x1v1duals1v1x2v1}\right) \\
\Integer/4\Integer \subset \Integer/5\Integer \rtimes \operatorname{Aut}(\Integer/5\Integer) & \qquad \left(\bigraph{bwd1v1v1p1p1v1x0x0p0x1x0p0x0x1duals1v1v2x1x3},\bigraph{bwd1v1v1p1p1v1x0x0p0x1x0p0x0x1duals1v1v2x1x3}\right) \\
A_4 \subset A_5 & \qquad \left(\bigraph{bwd1v1v1v1p1p1v0x0x1p0x0x1duals1v1v1x2x3}, \bigraph{bwd1v1v1v1p1p1v0x1x0p0x0x1v1x0p0x1duals1v1v1x2x3v2x1}\right) \\
S_4\subset S_5 & \qquad \left(\bigraph{bwd1v1v1v1p1v1x0p0x1v1x0p1x1v1x0v1duals1v1v1x2v1x2v1}, \right. \\
& \qquad\qquad\qquad\qquad
\left. \bigraph{bwd1v1v1v1p1v0x1p0x1v1x0p1x0p0x1v0x1x0v1duals1v1v1x2v1x2x3v1}\right)
\end{align*}
and the first three of these are self-dual and the last two are not.

This result has not yet appeared in the literature, but hopefully will soon. The combinatorial work done by the odometer cuts down the possibilities to the 5 cases that actually exist, and the following three graph pairs.
\begin{itemize}
\item $\left(\bigraph{bwd1v1v1v1p1p1v0x1x0p0x0x1v1x0p0x1duals1v1v1x3x2v2x1}, \bigraph{bwd1v1v1v1p1p1v0x1x0p0x0x1v1x0p0x1duals1v1v1x3x2v2x1}\right)$
\item $\left(\bigraph{bwd1v1v1p1p1v1x0x0p0x1x0p0x0x1duals1v1v1x2x3}, \bigraph{bwd1v1v1p1p1v1x0x0p0x1x0p0x0x1duals1v1v1x2x3}\right)$
\item $\displaystyle \begin{gathered}[t]
 \left(\bigraph{bwd1v1v1v1v1v1p1p1v1x0x0p0x1x0v1x0v1v1duals1v1v1v1x2x3v1v1}, \right. \qquad\qquad\qquad\qquad\qquad\\
\qquad\qquad\qquad\qquad\qquad \left.\bigraph{bwd1v1v1v1v1v1p1p1v1x0x0p0x1x0v1x0v1v1duals1v1v1v1x2x3v1v1}\right)
       \end{gathered}$
\end{itemize}
The first two pairs don't have biunitary connections, while the even part of the third has a multiplicity free eigenvalue, and hence dimension function, for which the formal codegree does not lie in the field generated by the Frobenius-Perron dimensions, contrary to an (unpublished) result of Ostrik's.

For the first three group-subgroup subfactors, Izumi's Goldman type result from \cite{MR1491121} shows that there is a unique subfactor planar algebra with the given principal graph. One can show uniqueness for the $A_4 \subset A_5$ and $S_4 \subset S_5$ subfactors by demonstrating that there is a unique gauge equivalence class of biunitary connections.

\subsection{Indexes between 5 and \texorpdfstring{$3+\sqrt{5}$}{3+sqrt(5)}}
Above index $4$, the first composite index is $3+\sqrt{5}$. Thus $3+\sqrt{5}$ is a natural upper bound to consider.  In this range there are two known subfactors which arise from quantum groups.  These two examples both have index the largest root of $x^3-6x^2+5x-1$, which is approximately $5.04892$, with principal graphs
\begin{align*}
\Gamma(\cA) & = \left(\bigraph{bwd1v1p1v1x0p1x1duals1v1x2}, \bigraph{bwd1v1p1v1x0p1x1duals1v1x2}\right) \qquad \text{and} \\
\Gamma(\cB) & = \left(\bigraph{bwd1v1v1p1v1x0p0x1p0x1v0x1x0p1x0x1duals1v1v2x1x3}, \bigraph{bwd1v1v1p1v1x0p0x1p0x1v0x1x0p1x0x1duals1v1v2x1x3}\right).
\end{align*}
These come from the 3-dimensional representations of $SU(2)$ and $SU(3)$ at a $14$-th root of unity \cite{MR1470857}. In \cite{1205.2742}, Morrison and Peters have proved that these quantum group subfactors are the only subfactors with these principal graphs. Moreover, they prove that $\Gamma(\cA)$ is the only possible 1-supertransitive principal graph in the interval $(5, 3+\sqrt{5})$.

In this interval, there is computer evidence that there are no other finite depth subfactors. We can show that the principal graph pair of any other subfactor has at least 38 vertices. We also know that there is a (tiny!) gap, with no subfactors besides $A_\infty$ with index in the interval $(5,5.004)$.
 
 \begin{conj}
 The only two irreducible subfactor planar algebras (except those with principal graph $A_\infty$) with index in $(5, 3+\sqrt{5})$ are the quantum group subfactors described above.
 \end{conj}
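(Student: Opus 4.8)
The plan is to run the classification machinery of \S\ref{sec:proving} all the way up to $3+\sqrt5$: enumerate every candidate principal graph pair in the range with the odometer, prune using the combinatorial and number-theoretic obstructions already developed, and for each survivor either construct and classify the subfactor or derive a contradiction. Since Morrison and Peters \cite{1205.2742} have already shown that the two quantum group subfactors are the unique subfactors with principal graphs $\Gamma(\cA)$ and $\Gamma(\cB)$, and that $\Gamma(\cA)$ is the only $1$-supertransitive principal graph in $(5,3+\sqrt5)$, the whole problem reduces to proving that no subfactor with any other principal graph pair exists in this interval. Incorporating the $1$-supertransitivity result reduces us to the classification statement $\left(\at, 3+\sqrt5, \eset, \left\{\at\right\}\right)$, and the first step is to run the odometer to a substantial depth, producing a finite set of weeds $\cW$ and a long but finite list of vines $\cV$.

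Here the associativity test of \S\ref{sec:associativity} and Ocneanu's triple point obstruction of \S\ref{sec:ocneanu-triple-point} do most of the pruning, working in tension with one another exactly as before: the triple point obstruction forces the two graphs of a pair to begin asymmetrically, while associativity demands a symmetry in path counts. I would also deploy the stability theorem, which forces any branch that becomes stable above its supertransitivity to close off into a finite graph, so that such branches can be handled directly or excluded via the $A_\infty$-tail obstructions of Popa and Peters. The goal, as in \S\ref{sec:odometer}, is to strike a balance: push the odometer far enough that only a handful of weeds survive, each with enough structure to admit a tailored obstruction, while keeping the vine list within reach of number theory.

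Next I would eliminate the vines uniformly. For each vine $\Gamma$ one computes the effective constant $N(\Gamma)$ of Theorem \ref{theorem:graphs2}, rules out all translates of supertransitivity above $N(\Gamma)$ because their index fails to be cyclotomic, and then checks the finitely many small translates directly---computing minimal polynomials via the periodicity shortcut of \cite{index5-part4} to test cyclotomicity, and applying Ostrik's d-number criterion \cite{0810.3242} to anything that survives. As in the index $5$ analysis, I expect the vast majority of small cases to fail cyclotomicity outright, with the few genuinely cyclotomic survivors dispatched by the d-number obstruction or by the non-existence of a biunitary connection.

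The hard part will be the remaining weeds together with the behaviour near the right endpoint. Each weed admits arbitrary supertransitivity and so must be cut down by a bespoke argument---an analysis of connections in the associated graph planar algebra, a quadratic-tangle or annular-multiplicity dimension count, or a further triple/quadruple point obstruction of the kind used to dispatch $\cC$, $\cF$, $\cB$, $\cQ$, and $\cQ'$ in \S\ref{sec:triple-points} and \S\ref{sec:quadruple}. Moreover, because $3+\sqrt5 = \frac{3+\sqrt5}{2}\times 2$ is composite, subfactors with intermediate subfactors of Fuss--Catalan and Bisch--Haagerup type genuinely appear as the index approaches the endpoint, so the enumeration must be robust enough to recognise and correctly classify composite examples rather than merely forbid them---precisely the regime where the anticipated ``explosion'' of subfactors lives. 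The bottleneck is therefore twofold: guaranteeing that the weed list can actually be reduced to nothing, since no purely combinatorial test is certain to terminate; and controlling the growth of both lists as $\Lambda \to 3+\sqrt5$, so that the established $38$-vertex lower bound and the $(5,5.004)$ gap can be extended across the entire interval.
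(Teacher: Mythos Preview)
The statement is a \emph{conjecture}, not a theorem, and the paper offers no proof. It reports only partial evidence: the Morrison--Peters uniqueness results for $\Gamma(\cA)$ and $\Gamma(\cB)$, the fact that $\Gamma(\cA)$ is the unique 1-supertransitive graph in the interval, a computer-verified lower bound of 38 vertices on any other principal graph pair, and the existence of the gap $(5,5.004)$. Your proposal is explicitly labelled a strategy, and as such it is the natural one---it is exactly the program the paper implicitly suggests for attacking the conjecture. But there is nothing to compare it against, because no proof exists.

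One correction to your final paragraph: you worry that ``subfactors with intermediate subfactors of Fuss--Catalan and Bisch--Haagerup type genuinely appear as the index approaches the endpoint,'' but the interval $(5,3+\sqrt{5})$ is open and contains no composite index values. The smallest composite index above $4$ is precisely $3+\sqrt{5} = 2 \cdot \tfrac{3+\sqrt{5}}{2}$, so all the Fuss--Catalan and Bisch--Haagerup phenomena sit at the excluded right endpoint, not inside the interval. This actually removes one of the difficulties you anticipate. The genuine obstruction---which you do identify---is that there is no mechanism guaranteeing the odometer terminates: nothing ensures the weed list can be emptied as $\Lambda \to 3+\sqrt{5}$, and the bespoke weed-by-weed eliminations of \S\ref{sec:triple-points}--\ref{sec:quadruple} do not come with any promise of extending to whatever new weeds appear. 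That is exactly why the statement remains open.
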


\subsection{Index \texorpdfstring{$3+\sqrt{5}$}{3+sqrt(5)}}
The index $3+\sqrt{5}$ is a product of smaller allowed index values $2$ and $4 \cos^2(\pi/5)$, and as a result there are many interesting examples of subfactors at this index. The full list is as yet unknown. Perhaps most interesting is the infinite depth Fuss-Catalan subfactor, obtained as the free product $A_3 \ast A_4$ of subfactors at index $2$ and $4 \cos^2(\pi/5)$.  Its principal graph begins as
$$ \left( \mathfig{0.45}{fc1}, \mathfig{0.45}{fc2} \right) $$
and continues periodically.
(This is not actually the first instance of this phenomenon; the $D_\infty^{(1)}$ subfactor at index 4 is the free product $A_3 \ast A_3$, and many features of the index $3+\sqrt{5}$ Fuss-Catalan subfactor are analogous.)

There is also of course the tensor product $A_3 \tensor A_4$, with principal graph $$\left(\bigraph{bwd1v1p1p1v1x1x0duals1v1x2x3},\bigraph{bwd1v1p1p1v1x1x0duals1v1x2x3}\right).$$ Next, there are tantalizing hints of a sequence of subfactors interpolating between the tensor product and the free product; Bisch and Haagerup constructed (in unpublished notes on compositions of $A_3$ with $A_4$) a subfactor with principal graph
$$\left(\bigraph{bwd1v1p1v1x0p0x1v1x0p1x0p1x0p0x1v0x1x0x1v1duals1v1x2v1x2x4x3v1}, \bigraph{bwd1v1p1v1x0p1x0v1x0p1x0v1x1duals1v1x2v1x2}\right),$$ and recent unpublished work of Izumi has shown the existence of a subfactor with principal graph 
$$\left(\bigraph{bwd1v1p1v1x0p0x1v1x0p1x0p0x1v1x0x0p0x0x1v1x0p1x0p1x0p0x1p0x1v0x1x0x1x0v1duals1v1x2v1x3x2v1x2x4x3x5v1}, \bigraph{bwd1v1p1v1x0p1x0v1x0v1p1v1x0p1x0v1x1duals1v1x2v1v1x2}\right).$$  We note that the finite depth $D_{n}^{(1)}$ subfactors at index 4 should be considered analogues, interpolating between $A_3 \tensor A_3 = D_4^{(1)}$ and $A_3 \ast A_3 = D_\infty^{(1)}$.

Two other extremely interesting subfactors at index $3+\sqrt{5}$ have principal graph $3333$, with the group of invertible bimodules being $\Integer/4$ or $\Integer/2 \times \Integer/2$ (we will refer to these as $3^G$ where $G$ is the group of invertibles). These subfactors have been constructed by Izumi (unpublished), as special cases of a construction which potentially gives a $3^G$ subfactor for many abelian groups $G$ (this was also discussed in \S \ref{Haagerup}). Morrison and Penneys subsequently constructed a $4442$ subfactor \cite{1208.3637}, and gave an alternative construction of the $3^{\Integer/2 \times \Integer/2}$ subfactor, using a combination of the jellyfish algorithm and quadratic tangles techniques. Izumi then identified this 4442 subfactor as an equivariatization of the $3^{\Integer/2 \times \Integer/2}$ subfactor.

As we will explain below there are several ``wild'' phenomenon at index $6$ which show that there is no simple classification of subfactors at that index.  Since $6$ is also a composite index, it seems plausible that similar wild behavior takes place at index $3+\sqrt{5}$.  So far, however, there are no results in this direction.

\subsection{The onset of wildness at index 6} \label{Index6}
At index $6$ a full classification is not a reasonable goal. 
There are three ways in which we might fail to achieve a satisfactory classification. First, there could
be such a plethora of finite depth planar algebras that one could not hope to organise
them. Second, the same could be true of infinite depth planar algebras. And third,
there could be a huge family of distinct subfactors sharing the same planar algebra. 
All three of these ``wild" phenomena occur at index 6. The reason for all of these failures is 
that $6=2\times 3$ and the free product $(\mathbb Z/{2\mathbb Z})*({\mathbb Z}/{3\mathbb Z})$ is essentially a free group.

Unlike amenable groups, free groups can be made to act in a huge variety of
ways by outer automorphisms on II$_1$ factors. The construction of Bisch-Haagerup \cite{MR1386923}
takes an action on $M$ of the free product of finite groups $G$ and $H$ and considers the
subfactor $M^G\subset M\rtimes H$ (of index $|G||H|)$. (There is a planar algebraic description of this construction in  \cite{MR2523334}.) The outer conjugacy class of
the action of $G*H$ is an invariant of this subfactor, so that we obtain a different subfactor of index 6, with different
planar algebra, \emph{for every group which is a quotient of $(\mathbb Z/{2\mathbb Z})*({\mathbb Z}/{3\mathbb Z})$}!

Even worse is the failure of planar algebras to classify subfactors.
The tool for understanding this is the ``fundamental group" of a subfactor (nothing to do with any $\pi_1$):
given a subfactor $N\subset M$ and a projection $p$ in $N$ one forms the reduced subfactor
$pNp\subset pMp$. It is routine to show that the planar algebra does not change under this reduction process.
(After further tensoring with matrix algebras, the set of all traces of such projections forms a multiplicative subgroup
of $\mathbb R^+$ called the fundamental group of the subfactor.)
But the isomorphism type of the subfactor may change under reduction, even when $M$ (and hence all other factors involved, by \cite{MR0454659}) is finite.
It was Connes who first used Kazhdan's property T \cite{MR0209390} to show that the fundamental group of a $II_1$ factor may be only countable \cite{MR587372}
and Popa's techniques yielded total control in some cases over the fundamental group. 
Thus in  \cite{MR2314611}, by using these property T techniques applied to quotients of $(\mathbb Z/{2\mathbb Z})*({\mathbb Z}/{3\mathbb Z})$,
Bisch, Nicoara, and Popa constructed subfactors with index 6 with countable and trivial fundamental groups, and hence uncountable families of subfactors with index 6.

% ----------------------------------------------------------------
\newcommand{\urlprefix}{}
\bibliographystyle{alpha}
\bibliography{../../bibliography/bibliography}
% ----------------------------------------------------------------

\end{document}